%!TEX TS-program = xelatex
%\documentclass[11pt, reqno]{amsart}
\documentclass[12pt,a4paper,american,reqno]{amsart}
\usepackage{amssymb}
\usepackage{amsmath}
\usepackage{graphicx}
\usepackage{enumerate}
%% The amssymb package provides various useful mathematical symbols
\usepackage{physics}
\usepackage{amssymb}
%% The amsthm package provides extended theorem environments
\usepackage{amsthm}
\usepackage{epsf}
\usepackage{amsbsy,amsmath}
\usepackage{mathtools}
\usepackage{mathrsfs}
\usepackage{amsfonts}
\usepackage{eucal}
\usepackage{graphics,mathrsfs}
\usepackage{amsthm}
\usepackage{secdot}
\usepackage{esint}
\usepackage{bbm}
\usepackage{varwidth}
\usepackage{tasks}
\usepackage{cite}
\usepackage[colorlinks,pdfpagelabels,pdfstartview = FitH,bookmarksopen = true,bookmarksnumbered = true,linkcolor = blue,plainpages = false,hypertexnames = false,citecolor = red,pagebackref=false]{hyperref}

\addtolength{\topmargin}{-15mm} \addtolength{\textheight}{30mm}
\addtolength{\oddsidemargin}{-15mm}
\addtolength{\evensidemargin}{-15mm} \addtolength{\textwidth}{30mm}
\theoremstyle{plain}
\newtheorem{theorem}{Theorem}[section]
\newtheorem{cor}[theorem]{Corollary}
\newtheorem{lemma}[theorem]{Lemma}

\theoremstyle{definition}
\newtheorem{definition}[theorem]{Definition}%\ignorespaces}

\newtheorem{remark}[theorem]{Remark}

\allowdisplaybreaks

\usepackage{xcolor}

\long\def\symbolfootnote[#1]#2{\begingroup
\def\thefootnote{\fnsymbol{footnote}}\footnote[#1]{#2}\endgroup}

\numberwithin{equation}{section}
%\def\thepropositio{\thesection.\arabic{proposition}}
%\def\thetheorem{\thesection.\arabic{theorem}}
%\def\theremark{\thesection.\arabic{remark}}
%\def\thecorollary{\thesection.\arabic{corollary}}
%\def\thelemma{\thesection.\arabic{lemma}}
%\def\theequation{\arabic{equation}}
%\numberwithin{equation}{section}
%
%
%\def\theequation{\thesection.\arabic{equation}}
%\numberwithin{equation}{section}
%\def\theproposition{\thesection.\arabic{proposition}}
%\def\thetheorem{\thesection.\arabic{theorem}}
%\def\theremark{\thesection.\arabic{remark}}
%\def\thecorollary{\thesection.\arabic{corollary}}
%\def\thelemma{\thesection.\arabic{lemma}}
%\def\thedefinition{\thesection.\arabic{definition}}
%\def\theequation{\arabic{equation}}
%\numberwithin{equation}{section}
%\setcounter{theorem}{0}\setcounter{lemma}{0}

\begin{document}

\title[Weighted Eigenvalue problems for mixed operators] {A weighted eigenvalue problem for mixed local and nonlocal operators with potential}
\author{R. Lakshmi, Ratan Kr. Giri and Sekhar Ghosh}
\address[R. Lakshmi]{Department of Mathematics, National Institute of Technology Calicut, Kozhikode, Kerala, India - 673601}
\email{lakshmir1248@gmail.com / lakshmi\_p220223ma@nitc.ac.in}
\address[Ratan Kr. Giri]{Department of Mathematics, The LNM Institute of Information Technology, Jaipur, India - 302031
}
\email{: giri90ratan@gmail.com / ratan.giri@lnmiit.ac.in }
\address[ Sekhar Ghosh]{Department of Mathematics, National Institute of Technology Calicut, Kozhikode, Kerala, India - 673601}
\email{sekharghosh1234@gmail.com / sekharghosh@nitc.ac.in}

\thanks{{\em 2020 Mathematics Subject Classification: } 35R11; 35J20; 35P15; 35P30}

\keywords{$p$-Laplacian; fractional $p$-Laplacian; Mixed operator; indefinite weight; eigenvalues and eigenfunctions}
\begin{abstract} 
 We study an {\it indefinite weighted eigenvalue problem} for an operator of {\it mixed-type} (that includes both the classical {\it $p$-Laplacian} and the {\it fractional $p$-Laplacian}) in a bounded open subset $\Omega\subset \mathbb{R}^N \,(N\geq2)$ with {\it Lipschitz boundary} $\partial \Omega$, which is given by 
\begin{align*}
    -\Delta_p u + (-\Delta_p)^su+V(x)|u|^{p-2}u&=\lambda g(x)|u|^{p-2}u~\text{in}~\Omega,\\
    u&=0~\text{in}~\mathbb{R}^N\setminus\Omega,
\end{align*}
where $\lambda >0$ is a parameter, exponents $0<s<1<p<N$, and $V, g\in L^q(\Omega)$ for $q\in \left(\frac{N}{sp}, \infty\right)$ with $V\geq 0, g > 0$ a.e. in $\Omega$. Using the variational tools together with a {\it weak comparison} and {\it strong maximum principles}, we investigate the existence and uniqueness of {\it principal eigenvalue} and discuss its qualitative properties. Moreover, with the help of {\it Ljusternik-Schnirelman category theory},  it is proved that there exists a {\it nondecreasing sequence of positive eigenvalues} which goes to infinity. Further, we show that {\it the set of all positive eigenvalues is closed}, and {\it eigenfunctions} associated with every {\it positive eigenvalue} are bounded.

\iffalse
Using the {\it weak comparison and strong comparison principles} of solutions to the corresponding homogeneous PDEs, we investigate the existence and uniqueness of {\it principal eigenvalue} and discuss its qualitative properties. Moreover, we prove that {\it the set of all positive eigenvalues is closed} and {\it eigenfunctions} associated with every {\it positive eigenvalue} are bounded. Finally, with the help of {\it Ljusternik-Schnirelman category theory}, the existence of a {\it nondecreasing sequence of positive eigenvalues} that goes to infinity has been obtained. 
\fi
\end{abstract}

\maketitle
%\tableofcontents
\section{Introduction}
Linear and nonlinear {\it eigenvalue problems} are a fundamental and fascinating area of research due to their far-reaching implications for understanding complex phenomena in various fields, such as in data science (image and data processing), deep learning (in deep neural networks(DNNs) to model connectivity among the network layers),  and also in nonlinear resonance, pattern formation, glaciology, shape optimization \& spectral theory, bifurcations \& phase transitions, population dynamics, nonlinear elasticity, fluid dynamics etc. to mention a few. For more details, one can refer to \cite{ETT15, B18, G18, GR03, BH17} and the relevant references therein. These problems involve studying eigenvalues and associated eigenfunctions for the different types of operators, like the classical {\it Laplace operator, $p$-Laplacian}, and also for its nonlocal version known as the {\it fractional $p$-Laplacian}. In the past decades, interest towards the study of nonlinear problems involving a {\it mixed operator} given by the sum of the classical $p$-Laplacian and the fractional $p$-Laplacian has grown a lot. For the relevant contributions involving this {\it mixed-type operator}, we refer to \cite{BDVV, BDVV2, BDVV3, BDVV6, BMV, BM23, DM22, DPLV3, GK22, GL23, LG24, SVWZ} and allied references therein. 
\par In this connection, by looking into various mathematical and physical aspects, the present paper is devoted to study a {\it weighted eigenvalue problem} involving the {\it mixed operator} over an open bounded subset $\Omega \subset \mathbb{R}^N$, ($N\geq 2$)  with {\it Lipschitz} boundary $\partial \Omega$, that is given by
\begin{align}\label{B}
\begin{split}
    -\Delta_p u + (-\Delta_p)^su+V(x)|u|^{p-2}u&=\lambda g(x)|u|^{p-2}u~\,\,\text{in}~\Omega,\\
    u&=0~\,\,\text{in}~\mathbb{R}^N\setminus\Omega,
    \end{split}
\end{align}
where $\lambda >0$ is a parameter and exponents $0<s<1<p<N$. Here, $\Delta_p u:= \text{div}(|\nabla u|^{p-2}\nabla u)$, denotes the $p$-Laplacian and the $(-\Delta_p)^su$ denotes the fractional $p$-Laplacian, defined as 
	$$(-\Delta_p)^su(x) := \mbox{P.V.}\int_{\mathbb{R}^N}\frac{|u(x)-u(y)|^{p-2}(u(x)-u(y))}{|x-y|^{N+sp}}d y,\,\,\forall\, x\in \mathbb{R}^N,$$
where P.V. refers to the {\it Cauchy principal value} of an integral. We denote 
$$\mathcal{L}_{s,p}(u) :=-\Delta_p u+ (-\Delta_p)^s u,$$
and call it as {\it mixed operator}. We assume that the potential $V: \Omega \rightarrow \mathbb{R}$, and the weight function $g: \Omega \rightarrow \mathbb{R}$ satisfies the following two conditions: 
$$ V, g\in L^q(\Omega)\, \text{ for } \,q\in \left(\frac{N}{sp}, \infty\right) \,\text{ and }\, V\geq 0, g>0 \,\text{ a.e. in } \Omega.$$

 \par The problem \eqref{B} is {\it half-linear} i.e., if $u$ is a solution of \eqref{B}, then for every $c\neq 0$, $cu$ is also a solution of \eqref{B} however, the sum of any two nonzero solutions is not a solution. So, we expect the existence of {\it positive solutions} and {\it positive eigenvalues} that would share similar properties, as for the local case when $\mathcal{L}_{s,p}(u)=-\Delta_p u$ only, and $V=0$ with $g=1$. In this case, the equation \eqref{B} becomes the well-known {\it Dirichlet eigenvalue problem} for $p$-Laplacian operator, and there is a vast literature devoted to this problem, and we refer \cite{GP87, L06, L90, AH95} to list a few. Also, the {\it eigenvalue problem} for the {\it non-local operator} when $\mathcal{L}_{s,p}(u)=(-\Delta_p)^su $ only and $V=0$ with $g=1$ has been studied by many authors, e.g. see \cite{BP16, DKP16, FP14, GKR24, LL14} and references therein.\\ 
 \par Before entering into the main goal and motivation behind considering the problem \eqref{B} in the present paper, let us first recall the existing literature on {\it weighted eigenvalue problems} dealing with $p$-Laplacian, fractional $p$-Laplacian, and mixed operators, separately.
 \begin{itemize}
 \item \textbf{The case of $p$-Laplacian:} For the case when $V=0$ and $\mathcal{L}_{s,p}(u)=-\Delta_p u$ only, the equation \eqref{B} reduce to  
 \begin{align}\label{P1}
\begin{split}
    -\Delta_p u &=\lambda g(x)|u|^{p-2}u~\,\,\text{in}~\Omega,\\
    u&=0~\,\,\text{on }\,\partial \Omega.
    \end{split}
\end{align}
 The weighted eigenvalue problem \eqref{P1} has been discussed under several hypotheses on the weight functions $g$, see \cite{AH95, A11, C01} and references therein. These articles discuss the existence of {\it positive eigenvalues} (see Definition \ref{evalue}) and give a characterization of the {\it principal eigenvalue}, which is simple, isolated and least among all positive eigenvalues. In addition, a monotonicity result for a sequence of positive eigenvalues has been obtained, which indicates the {\it existence of a non-decreasing sequence of positive eigenvalues that goes to infinity}. Also, the above results were further extended to the weighted eigenvalue problem involving $p$-Laplacian with a potential $V$, given by
  \begin{align}\label{P2}
\begin{split}
    -\Delta_p u +V(x)|u|^{p-2}u&=\lambda g(x)|u|^{p-2}u~\,\,\text{in}~\Omega,\\
    u&=0~\,\,\text{on }\,\partial \Omega.
    \end{split}
\end{align}
There are many research papers concerning the problem \eqref{P2}. One can refer to \cite{OT88, BD06, GP23} for different types of assumptions on $V$ and $g$. We especially mention here the work of Cuesta and Quoirin \cite{CQ09},  where the functions $V, g$ are assumed to be in $L^r(\Omega)$ for some $r>Np$ if $1<p\leq N$, and $ r=1$ if $p>N$.\\
\item \textbf{The case of fractional $p$-Laplacian:} The aforementioned results as in the local case are also obtained in \cite{I23} for the case of $V=0$ and $\mathcal{L}_{s,p}(u)=(-\Delta_p)^su$ only, under the assumption of $g$ that belongs $L^{\frac{N}{sp}}(\Omega)$ and $g^+\neq 0$ in $\Omega$. Recently, Asso et. al. \cite{ADLQ23} studied the existence and properties of {\it principal eigenvalue} \eqref{B} for the case of $\mathcal{L}_{s,p}(u)=(-\Delta_p)^su$ only,  when $V$ and $m$ are {\it indefinite sign-changing} functions and satisfying the following conditions: $V,g\in L^r(\Omega)$ with $ r\in (1,\infty)\cap (\frac{N}{sp},\infty)$ with $g^+\neq 0$.\\
\item \textbf{The case of mixed operator:} When $V=0$ and $g=1$, the eigenvalue problem \eqref{B} has been studied in \cite{DFR19}. The authors obtained the existence and properties of the {\it principal eigenvalue} as well as studied the limiting case as $p \rightarrow \infty$. For the case when $V\in L^\infty(\Omega)$ and $g=1$, the eigenvalue problem \eqref{B} has been considered by Biagi et. al. \cite[Proposition 5.1]{BMV}. In this work, the authors proved the existence of a {\it positive principal eigenvalue} that is unique, simple, and isolated. Further, it is also obtained that {\it principal eigenfunctions} do not change the signs in $\mathbb{R}^N$, whereas {\it eigenfunctions} associated with {\it non-principal positive eigenvalues} are {\it nodal}, i.e. sign-changing. 
\end{itemize}
\vspace{0.2cm}
\par \textbf{The main goal:} The main purpose of the present paper is to study the existence of eigenvalues and eigenfunctions and characterize the properties for the case when $V, g\in L^q(\Omega)$ for $q\in \left(\frac{N}{sp}, \infty\right)$ with $V\geq 0, g>0$ a.e. in $\Omega$. To be more precise, we discuss here the {\it positive eigenvalues} associated with the problem \eqref{B}. We establish a {\it weak comparison principle}(Theorem \ref{B-CP}) for the operator $\mathcal{L}_{s,p}(u)$ with potential $V\in L^q(\Omega)$, $q\in \left(\frac{N}{sp}, \infty\right)$ along with a generalized {\it strong maxmium principle} (Theorem \ref{B-smp}). Then using {\it comparison principles} together with the {\it Raleigh quotient}, we study the existence and qualitative behaviours of {\it principal eigenvalues}. We also derive the {\it monotonicity} results of {\it positive eigenvalues} (as mentioned in the local and nonlocal cases), nodal domains and obtain the {\it boundedness} of {\it eigenfunctions} associated with {\it positive eigenvalues}. To prove monotonicity and boundedness results, we use the {\it Lusternick-Schinrelman category theory} and {\it Moser-type iteration}, respectively. We summarize here the central results of this paper as follows: 
\begin{theorem}
Let $\Omega$ be a bounded domain in $\mathbb{R}^N$, ($N\geq 2$), with Lipschitz boundary $\partial\Omega$ and $0<s<1<p<N$ . Assume that $V, g\in L^q(\Omega)$ for $q\in \left(\frac{N}{sp}, \infty\right)$ with $V\geq 0, g>0$ a.e. in $\Omega$. Then
\begin{itemize}
\item[(a)] there exists principal eigenvalue $\lambda_1(\Omega)$ of \eqref{B} which is strictly positive and satisfies the following properties:
\begin{itemize}
    \item[(i)] $\lambda_1(\Omega)$ is simple and isolated;
    \item[(ii)] the eigenfunction associated with the principal eigenvalue $\lambda_1(\Omega)$ don't change sign in $\Omega$;
    \item[(iii)] the eigenfunctions associated with non-principal eigenvalue $\lambda(\Omega)>\lambda_1(\Omega)$ is nodal i.e., change signs.  
\end{itemize}
\item[(b)] the problem \eqref{B} admits a monotone sequence of positive eigenvalues which goes to infinity.
\item[(c)] the set of all positive eigenvalues to \eqref{B} is closed.
\item[(d)] the eigenfunctions associated with every positive eigenvalue to \eqref{B} are bounded in $\mathbb{R}^N$.
\end{itemize}
\end{theorem}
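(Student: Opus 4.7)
The plan is to work on the natural energy space $X$ of functions $u\in W^{1,p}(\mathbb{R}^N)\cap W^{s,p}(\mathbb{R}^N)$ with $u\equiv 0$ outside $\Omega$, equipped with the norm
$$\|u\|^p := \int_\Omega |\nabla u|^p\,\d x + \iint_{\mathbb{R}^{2N}}\frac{|u(x)-u(y)|^p}{|x-y|^{N+sp}}\,\d x\,\d y + \int_\Omega V\,|u|^p\,\d x,$$
which, since $V\geq 0$, is equivalent to the standard mixed norm. The cornerstone of everything is the variational characterization
$$\lambda_1(\Omega) \;=\; \inf\Bigl\{\,\|u\|^p \;:\; u\in X,\ \int_\Omega g\,|u|^p\,\d x = 1\,\Bigr\},$$
and the parts (a)--(d) will be extracted from this infimum and from higher min--max levels of Ljusternik--Schnirelman type.

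For part (a), positivity $\lambda_1>0$ follows from a Hölder estimate $\int_\Omega g\,|u|^p \le \|g\|_{L^q}\,\|u\|_{L^{pq'}}^p$ combined with the mixed Sobolev embedding $X\hookrightarrow L^{pq'}(\Omega)$, which is available because $q>N/(sp)$ makes $pq'<p_s^*$. Existence of a minimizer is the direct method: a minimizing sequence is bounded in $X$, weak lower semicontinuity of $\|\cdot\|^p$ holds term by term, and the compact embedding into $L^{pq'}$ passes the constraint $\int g|u|^p=1$ to the limit. Since $\|\,|u|\,\|\le \|u\|$ (using $\bigl|\,|a|-|b|\,\bigr|\le|a-b|$ in the Gagliardo seminorm), a nonnegative minimizer exists, and the strong maximum principle (Theorem \ref{B-smp}) upgrades it to $u>0$ in $\Omega$. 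Simplicity I would obtain through a Picone / Díaz--Sáa argument: given two positive first eigenfunctions $u,v$, test the eigenvalue equations with $(u^p-v^p)/u^{p-1}$ and $(v^p-u^p)/v^{p-1}$, then add; the local and nonlocal convexity inequalities combine additively and force $u\equiv cv$. The sign property (ii) is already encoded in the construction of the minimizer. Isolatedness is a contradiction argument: a sequence $\mu_n\downarrow\lambda_1$ with eigenfunctions $u_n$ that change sign, after normalization and passage to the limit via the weighted compact embedding, would produce a sign-changing principal eigenfunction, contradicting simplicity while violating a uniform lower bound on the measure of the nodal sets obtained from the Sobolev--Poincaré inequality. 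Part (iii) is immediate: if $u>0$ solved \eqref{B} at $\lambda>\lambda_1$, the same Picone identity paired with the principal eigenfunction would force $\lambda=\lambda_1$.

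For (b), I would apply the Ljusternik--Schnirelman / Krasnoselskii genus machinery to the even $C^1$ functional $J(u)=\|u\|^p$ restricted to the weighted $C^1$-manifold $\mathcal{M}=\{u\in X: \int_\Omega g|u|^p=1\}$, setting
$$\lambda_n := \inf_{K\in\Sigma_n}\,\sup_{u\in K} J(u), \qquad \Sigma_n := \{K\subset\mathcal{M}:\ K \text{ symmetric, compact, } \gamma(K)\geq n\},$$
with $\gamma$ the Krasnoselskii genus. Monotonicity $\lambda_n\le\lambda_{n+1}$ and $\lambda_n\to\infty$ follow from the standard genus inequalities once one verifies the Palais--Smale condition, which in turn reduces to the compactness of the Nemytskii operator $u\mapsto g|u|^{p-2}u$ from $X$ into $X^{*}$ and the $(S_+)$-property of $\mathcal{L}_{s,p}+V|\cdot|^{p-2}\cdot$. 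Part (c) is then a direct passage-to-the-limit argument: given positive eigenvalues $\lambda_n\to\lambda$ with normalized eigenfunctions $u_n$, testing with $u_n$ gives an $X$-bound, weak compactness produces a limit $u$, the $(S_+)$-property upgrades this to strong convergence, and the compact embedding handles the right-hand side, so $u$ is a nontrivial eigenfunction at $\lambda$. Part (d) is a Moser iteration: testing the equation with $u\,\min(|u|,M)^{(p-1)\beta}$ and using the Sobolev inequality on the local part together with the elementary nonlocal inequality $(a-b)(F(a)-F(b))\ge 0$ applied to $F(t)=t\,\min(|t|,M)^{(p-1)\beta}$ yields a reverse-Hölder recursion; the assumption $V,g\in L^q$ with $q>N/(sp)$ is exactly what allows the iteration to be closed and bootstrapped to $L^\infty(\mathbb{R}^N)$ since $u=0$ outside $\Omega$.

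The most delicate step will be simplicity of $\lambda_1$ in (a)(i). The Picone / Díaz--Sáa calculation is classical for $-\Delta_p$ and for $(-\Delta_p)^s$ separately, but assembling both pieces in the presence of a merely $L^q$-potential $V$ requires checking that the test functions $(u^p-v^p)/u^{p-1}$ and $(v^p-u^p)/v^{p-1}$ lie in $X$ (so that the weak comparison principle of Theorem \ref{B-CP} and the quantitative strong maximum principle Theorem \ref{B-smp} guarantee $u,v$ are bounded below on compact subsets of $\Omega$), and that the potential contribution $\int V\,(u^p-v^p)$ is absolutely convergent and cancels cleanly when the two equations are subtracted; this is precisely where the integrability threshold $q>N/(sp)$ is decisive.
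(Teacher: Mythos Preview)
Your overall architecture matches the paper closely: direct method for existence of $\lambda_1$, Ljusternik--Schnirelman theory for part (b), an $(S_+)$/compactness passage to the limit for (c), and Moser-type iteration for (d). The genuine differences are in how you handle simplicity and part (a)(iii).

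For simplicity and for (a)(iii) the paper does \emph{not} use a Picone/D\'iaz--Sa\'a identity. Instead it exploits the ``hidden convexity'' of the mixed energy: given two positive eigenfunctions $u,v$ it forms the interpolant $q_t=(tu^p+(1-t)v^p)^{1/p}$ (with $t=\tfrac12$ for simplicity, and general $t\in(0,1)$ for (iii)), checks that $q_t\in X_0^{s,p}(\Omega)$ automatically, and uses pointwise convexity of $t\mapsto t^p$ on both the local term $|\nabla q_t|^p$ and the Gagliardo integrand to get $\|q_t\|^p\le t\|u\|^p+(1-t)\|v\|^p$. For simplicity, equality forces $\nabla u/u=\nabla v/v$ a.e.; for (iii), the resulting inequality combined with the eigenvalue equation for $v$ (tested with $q_t-v$) yields $\lambda\le\lambda_1$, a contradiction. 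The advantage of this route is exactly that it sidesteps the admissibility problem you correctly flag: $q_t$ lives in the energy space with no appeal to lower bounds on $u,v$ near $\partial\Omega$, whereas your test functions $(u^p-v^p)/u^{p-1}$ require either a Hopf-type boundary estimate or a truncation-and-limit argument that the paper never needs. Your Picone approach is a legitimate alternative, but the hidden-convexity argument is cleaner here.

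Two smaller differences: for isolatedness the paper derives an explicit lower bound $|\Omega_-|\ge (C\lambda\|g\|_{L^{N/(sp)}})^{-N/(p-ps)}$ by testing with $v_-$, then uses Egorov's theorem (rather than your more heuristic ``uniform lower bound on nodal sets'') to reach the contradiction; and for (d) the paper runs a De Giorgi-type iteration on the truncations $u_n=(u-(1-2^{-n}))_+$ rather than your power-type test functions $u\min(|u|,M)^{(p-1)\beta}$, though both are standard and lead to the same conclusion.
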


The rest of the paper is organized as follows. In Section $2$, we recall the classical {\it Sobolev} and {\it fractional Sobolev spaces}, and provide basic results related to these spaces. Then, in Section $3$, we derive {\it weak comparison principle} of solutions to the corresponding homogeneous problem of \eqref{B} and certain estimates. As a consequence, we prove the strong maximum principle for eigenfunctions associated with positive eigenvalues to \eqref{B}. Section $4$ is devoted to study the existence and qualitative properties of the {\it principal eigenvalue}. Finally, in Section $5$, we show the existence of infinitely many {\it positive eigenvalues} going to infinity, and discuss the {\it closedness of the set of all positive eigenvalues}. At last, we obtain the global {\it boundedness of eigenfunctions} associated with every {\it positive eigenvalue}. 

\section{Prerequisites and space setup}\label{s2} 
We begin with some standard notations, definitions and the results for classical {\it Sobolev} and {\it fractional Sobolev spaces} \cite{B11, DPV12, L23}, which will be used throughout the paper.
\par Let $\Omega$ be a nonempty open set in $\mathbb{R}^N$ with dimension $N\geq 2$. The classical Sobolev space $W^{1,p}(\Omega)$ is defined as
\begin{equation*}
    W^{1,p}(\Omega)=\{u \in L^p(\Omega):\frac{\partial u}{\partial x_i} \in L^p(\Omega) \ \text{for all} \ 1 \leq i \leq N \}.
\end{equation*} 
It is well known that the space $W^{1,p}(\Omega)$ is a reflexive, separable Banach space equipped with the norm: 
\begin{equation}\label{1-norm}
    \|u\|_{1,p}=\left(\|u\|_{L^p(\Omega)}^p+\|\nabla u\|_{L^p(\Omega)}^p\right)^{\frac{1}{p}}.
\end{equation}
We denote $W_0^{1,p}(\Omega)$ as the closure of the space $C_c^{\infty}(\Omega)$ of smooth functions with compact support with respect to the Sobolev norm defined as in \eqref{1-norm}. It is also a Banach space, and can be characterized as
\begin{equation*}
    W_0^{1,p}(\Omega)=\{u \in W^{1,p}(\Omega):u=0 ~\text{on}~\partial\Omega \}.
\end{equation*} 
For $\displaystyle{0<s<1<p<\infty}$, the fractional Sobolev space $W^{s,p}(\Omega)$ is defined as
\begin{equation*}
    W^{s,p}(\Omega)= \left\{u \in L^p(\Omega): \frac{| u(x)-u(y)|}{| x-y | ^{\frac{N}{p}+s}} \in L^p(\Omega \times \Omega)\right\}
\end{equation*}
 and is equipped with the norm $\|u\|_{s,p}=\|u\|_{L^p(\Omega)}+[u]_{W^{s,p}(\Omega)}$, where
        \begin{equation*}
            [u]_{W^{s,p}(\Omega)}=
                \left(\int_{\Omega} \int_{\Omega}\frac{|u(x)-u(y)|^p}{|x-y|^{N+sp}} dx dy\right)^{\frac{1}{p}}, \mbox{$1 \leq p < \infty$}. 
        \end{equation*}  
Likewise, in the classical Sobolev spaces, the space $W_0^{s,p}(\Omega)$ denotes the fractional Sobolev space with zero boundary values, and it is defined by
\begin{equation*}
    W_0^{s,p}(\Omega)=\{u \in W^{s,p}(\mathbb{R}^N):u=0 ~\text{in}~\mathbb{R}^N\setminus\Omega\}.
\end{equation*} 
Both $W^{s,p}(\Omega)$ and $W_0^{s,p}(\Omega)$ are reflexive, separable Banach spaces for all $0<s<1<p<\infty$. The following defines the tail of a function in the fractional Sobolev space $W^{s,p}(\mathbb{R}^N)$, which will be used to obtain certain estimates. 
\begin{definition}[See \cite{DKP16}]
    Let $w \in W^{s,p}(\mathbb{R}^N)$. The tail of $w$ with respect to the ball $B_r(x_0)$ is defined by
    \begin{equation}\label{B-tail}
        \operatorname{Tail}(w;x_0,r)=\left(r^{sp}\int_{\mathbb{R}^N \setminus B_r(x_0)} |w(x)|^{p-1}|x-x_0|^{-(N+sp)}dx \right)^{\frac{1}{p-1}}.
    \end{equation}
\end{definition}
\vspace{0.4cm}
\par Next, we enlist the following {\it Sobolev inequalities} and the {\it Rellich-Kondrachov} type embedding results for the classical Sobolev and the fractional Sobolev spaces. %\cite[Theorem 2.72]{DD12} \cite[Theorem 9.9, Theorem 9.16]{B11}.
\begin{theorem}[See \cite{DD12}]\label{SE}
    Let $\Omega\subset\mathbb{R}^N$ be a bounded domain with Lipschitz boundary $\partial\Omega$ and $1<p<N$. Then we have the continuous embedding,
    \begin{equation*}
        W^{1,p}(\Omega) \hookrightarrow L^{q}(\Omega), \ 1 \leq q 
 \leq p^*=\frac{Np}{N-p}.
    \end{equation*}
       % \begin{enumerate}[label=(\roman*)]\item
        %\item $W^{1,p}(E) \subset L^q(E)$, $p %\leq q < \infty$, $p=N$
        %\item $W^{1,p}(E) \subset L^{\infty} (E)$, $p>N$,}
    %\end{enumerate}
    Moreover, for all $u \in W^{1,p}(\mathbb{R}^N)$ we have
    \begin{equation}\label{SEE}
        \left(\int_{\mathbb{R}^N} | u|^{p^*}dx\right)^{\frac{1}{p^*}} \leq  S\left(\int_{\mathbb{R}^N} | \nabla u|^p dx\right)^{\frac{1}{p}},
    \end{equation}
    where $S>0$ is the best Sobolev constant.
\end{theorem}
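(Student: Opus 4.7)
The plan is to establish the critical Sobolev inequality \eqref{SEE} first on $\mathbb{R}^N$ and then transfer it to the bounded Lipschitz domain $\Omega$ via an extension argument, finally obtaining the subcritical embeddings by Hölder interpolation. I would start from the case $p=1$ on $\mathbb{R}^N$: for $u\in C_c^\infty(\mathbb{R}^N)$ write $|u(x)|\le \int_{-\infty}^{x_i}|\partial_i u|\,d t$ for each $i=1,\dots,N$, multiply the $N$ estimates together, take the $(N-1)$-th root, and then integrate in $x_1,\dots,x_N$ successively, applying the generalized Hölder inequality at each step. This yields the Gagliardo--Nirenberg--Sobolev inequality $\|u\|_{L^{N/(N-1)}(\mathbb{R}^N)}\le C_N\|\nabla u\|_{L^1(\mathbb{R}^N)}$, which extends to $W^{1,1}(\mathbb{R}^N)$ by the density of $C_c^\infty(\mathbb{R}^N)$.

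For general $1<p<N$, I would apply the $p=1$ inequality to the test function $v:=|u|^{\gamma}$ with $\gamma=\frac{p(N-1)}{N-p}>1$, chosen so that $\frac{\gamma N}{N-1}=p^*$. Using $|\nabla v|=\gamma|u|^{\gamma-1}|\nabla u|$ and the ordinary Hölder inequality with exponents $p$ and $p/(p-1)$, one finds after rearrangement
\[
\|u\|_{L^{p^*}(\mathbb{R}^N)}\le C(N,p)\,\|\nabla u\|_{L^p(\mathbb{R}^N)},
\]
which is exactly \eqref{SEE} once $S$ is defined as the supremum of the left-to-right quotient. A routine density argument extends the inequality from $C_c^\infty(\mathbb{R}^N)$ to all $u\in W^{1,p}(\mathbb{R}^N)$; the sharpness of $S$ (i.e.\ that it is actually attained and expressible via the Talenti extremals) is not needed here, only the existence of a finite best constant.

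To transfer the inequality to $\Omega$, I would invoke the Calderón--Stein extension theorem for Lipschitz domains: there exists a bounded linear operator $E: W^{1,p}(\Omega)\to W^{1,p}(\mathbb{R}^N)$ with $(Eu)|_\Omega=u$ and $\|Eu\|_{W^{1,p}(\mathbb{R}^N)}\le C(\Omega,p)\,\|u\|_{W^{1,p}(\Omega)}$. Composing with \eqref{SEE} gives
\[
\|u\|_{L^{p^*}(\Omega)}\le \|Eu\|_{L^{p^*}(\mathbb{R}^N)} \le S\,\|\nabla Eu\|_{L^p(\mathbb{R}^N)}\le C\,\|u\|_{W^{1,p}(\Omega)},
\]
which is the continuous embedding at the critical exponent. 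For any $1\le q\le p^*$, since $|\Omega|<\infty$, Hölder's inequality yields $\|u\|_{L^q(\Omega)}\le |\Omega|^{\frac{1}{q}-\frac{1}{p^*}}\|u\|_{L^{p^*}(\Omega)}$, concluding the proof of $W^{1,p}(\Omega)\hookrightarrow L^q(\Omega)$ for the full subcritical range.

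The main technical obstacle is the construction of the extension operator on a Lipschitz domain, which is significantly more delicate than for $C^1$ domains where a simple reflection across the boundary suffices. For the Lipschitz case I would cover $\partial\Omega$ by finitely many balls on which, up to a rigid motion, $\partial\Omega$ is the graph of a Lipschitz function; straighten each graph via a bi-Lipschitz change of variables, reflect across the flattened boundary, undo the change of variables, and glue the pieces with a partition of unity together with the interior identity extension. This geometric construction is the step that requires the most care; the analytic parts (Hölder, density, interpolation) are standard and routine by comparison.
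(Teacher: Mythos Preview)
The paper does not supply its own proof of this theorem; it is stated with a citation to \cite{DD12} and used as a black box. Your proposal is a correct and complete outline of the standard textbook proof: the Gagliardo--Nirenberg slicing argument for $p=1$, the bootstrap via $v=|u|^{\gamma}$ with $\gamma=\frac{p(N-1)}{N-p}$ for general $p$, the Calder\'on--Stein extension to pass from $\mathbb{R}^N$ to a bounded Lipschitz domain, and H\"older interpolation for the subcritical range. This is essentially the argument one finds in \cite{DD12} and other standard references, so there is nothing to compare against within the paper itself.
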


%We also get the following from the the Rellich-Kondrachov Theorem(\cite{B11},Theorem 9.16).
\begin{theorem}\label{RK}(See \cite[Theorem 2.80]{DD12})
    Let $\Omega \subset \mathbb{R}^N$ be a bounded domain with Lipschitz boundary $\partial\Omega$ and $1<p<N$. Then we have the following Compact embedding,
    \begin{equation*}
     W^{1,p}(\Omega) \hookrightarrow L^q(\Omega),  \ 1 \leq q <p^*.    
    \end{equation*}
     %\begin{enumerate}[label=(\roman*)]
      %  \item
       % \item $W^{1,p}(E) \subset L^q(E) $, $q \in [p^*, \infty)$, $p=N$.
        %\item $W^{1,p}(E) \subset C(\overline{E})$, $p>N$.
    %\end{enumerate}
\end{theorem}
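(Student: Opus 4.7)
The plan is to reduce the question to the classical Fr\'echet--Kolmogorov compactness criterion in $L^p(\mathbb{R}^N)$ for a family of compactly supported $W^{1,p}$ functions, and then to bootstrap from $L^p$-compactness to $L^q$-compactness for every $1\leq q<p^*$ by interpolating against the Sobolev inequality from Theorem \ref{SE}.

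First I would use the Lipschitz-boundary hypothesis to invoke a bounded linear extension operator $E\colon W^{1,p}(\Omega)\to W^{1,p}(\mathbb{R}^N)$ satisfying $(Eu)|_{\Omega}=u$, $\operatorname{supp}(Eu)\subset\Omega'$ for some fixed bounded open $\Omega'\supset\overline{\Omega}$, and $\|Eu\|_{W^{1,p}(\mathbb{R}^N)}\leq C\|u\|_{1,p}$. Given a bounded sequence $\{u_n\}\subset W^{1,p}(\Omega)$, set $\tilde u_n:=Eu_n$; then $\{\tilde u_n\}$ is bounded in $W^{1,p}(\mathbb{R}^N)$ and all $\tilde u_n$ vanish outside the fixed compact set $\overline{\Omega'}$.

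Next I would verify the three hypotheses of the Fr\'echet--Kolmogorov criterion for $\{\tilde u_n\}$ in $L^p(\mathbb{R}^N)$: uniform $L^p$-bound (immediate from boundedness of $E$); uniform tightness (automatic from the common compact support in $\Omega'$); and equi-continuity of translations. The last is the decisive estimate
\begin{equation*}
\|\tilde u_n(\cdot+h)-\tilde u_n(\cdot)\|_{L^p(\mathbb{R}^N)}\leq |h|\,\|\nabla \tilde u_n\|_{L^p(\mathbb{R}^N)},
\end{equation*}
which I would prove first for $\varphi\in C_c^\infty(\mathbb{R}^N)$ by writing $\varphi(x+h)-\varphi(x)=\int_0^1 h\cdot\nabla\varphi(x+th)\,dt$ and applying Minkowski's integral inequality, then extending to $W^{1,p}(\mathbb{R}^N)$ by density. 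Fr\'echet--Kolmogorov then yields a subsequence $\tilde u_{n_k}\to v$ strongly in $L^p(\mathbb{R}^N)$; restricting to $\Omega$ gives convergence $u_{n_k}\to v|_{\Omega}$ in $L^p(\Omega)$, and since $|\Omega|<\infty$ this is already $L^q(\Omega)$-convergence for every $1\leq q\leq p$.

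Finally, to reach the full range $1\leq q<p^*$, for $q\in(p,p^*)$ I would choose $\theta\in(0,1)$ with $\tfrac{1}{q}=\tfrac{\theta}{p}+\tfrac{1-\theta}{p^*}$ and apply the log-convexity interpolation
\begin{equation*}
\|u_{n_k}-u_{n_j}\|_{L^q(\Omega)}\leq \|u_{n_k}-u_{n_j}\|_{L^p(\Omega)}^{\theta}\,\|u_{n_k}-u_{n_j}\|_{L^{p^*}(\Omega)}^{1-\theta}.
\end{equation*}
The Sobolev embedding from Theorem \ref{SE} bounds the second factor uniformly in $k,j$, while the previous step drives the first factor to zero; hence $\{u_{n_k}\}$ is Cauchy in $L^q(\Omega)$, which closes the argument. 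The main technical hurdle is the translation estimate above: one must secure density of $C_c^\infty(\mathbb{R}^N)$ in $W^{1,p}(\mathbb{R}^N)$ and propagate the smooth-function bound to weak-derivative functions uniformly in $n$. The construction of the extension operator in the first step is also non-trivial for a general Lipschitz $\Omega$, but I would cite it as a standard result rather than re-deriving it.
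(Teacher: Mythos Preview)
Your proof plan is correct and follows a standard route to the Rellich--Kondrachov theorem: extension to $\mathbb{R}^N$ via a Calder\'on--Stein operator, $L^p$-precompactness via Fr\'echet--Kolmogorov, and interpolation against the critical Sobolev embedding to reach all $q<p^*$. Each step is sound, and the technical caveats you flag (density of $C_c^\infty$ in $W^{1,p}(\mathbb{R}^N)$, existence of the extension operator on Lipschitz domains) are genuine but standard citations.

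However, there is nothing to compare against: the paper does \emph{not} prove Theorem~\ref{RK}. It is stated with the parenthetical reference ``(See \cite[Theorem 2.80]{DD12})'' and no proof is given; the authors simply quote it from Demengel--Demengel as a prerequisite. So your proposal supplies a full argument where the paper supplies only a citation. If your goal was to reproduce the paper's treatment of this statement, the appropriate response would have been to cite the same external reference and move on.
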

\begin{theorem} \label{emb_1}(See \cite{DPV12, DD12})
Let $\Omega\subset \mathbb{R}^N$ be a bounded domain with Lipschitz boundary $\partial\Omega$ and $0 < s < 1 < p <\infty$. Then the Sobolev space $W^{1,p}(\Omega)$ is continuously embedded in the fractional Sobolev space $W^{s,p}(\Omega)$ i.e., there exists a constant $ C = C(N, p, s) > 0$ such that 
\begin{equation}
\|u\|_{s,p} \leq C \|u\|_{1,p} \,\,,\, \forall \, u\in W^{1,p}(\Omega).\label{emb_eq1}
\end{equation}
Also for $\Omega=\mathbb{R}^N$, 
$$W^{1,p}(\mathbb{R}^N)\xhookrightarrow{} W^{s,p}(\mathbb{R}^N).$$ 
\end{theorem}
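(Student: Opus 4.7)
The plan is to control $\|u\|_{s,p}$ by $\|u\|_{1,p}$ term by term. Since $\|u\|_{L^p(\Omega)}\le \|u\|_{1,p}$ is immediate, the whole content of \eqref{emb_eq1} is the Gagliardo seminorm estimate
\[
[u]_{W^{s,p}(\Omega)}^p=\iint_{\Omega\times\Omega}\frac{|u(x)-u(y)|^p}{|x-y|^{N+sp}}\,dx\,dy\le C\,\|u\|_{1,p}^p.
\]
For the $\Omega=\mathbb{R}^N$ assertion there is nothing to extend. For a bounded Lipschitz $\Omega$, I would first invoke the classical Sobolev extension operator $E\colon W^{1,p}(\Omega)\to W^{1,p}(\mathbb{R}^N)$, available precisely because $\partial\Omega$ is Lipschitz, with $\|Eu\|_{W^{1,p}(\mathbb{R}^N)}\le C_\Omega\,\|u\|_{1,p}$, and then use the trivial monotonicity $[u]_{W^{s,p}(\Omega)}\le[Eu]_{W^{s,p}(\mathbb{R}^N)}$ (the double integral over $\Omega\times\Omega$ is dominated by the one over $\mathbb{R}^N\times\mathbb{R}^N$) to reduce the problem to the global inequality on $\mathbb{R}^N$.

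The key step is therefore the global estimate, which I would prove by splitting the double integral at the threshold $|x-y|=1$. On the \emph{far} piece $\{|x-y|\ge 1\}$, the crude bound $|u(x)-u(y)|^p\le 2^{p-1}(|u(x)|^p+|u(y)|^p)$ together with Fubini and the convergence of $\int_{|h|\ge 1}|h|^{-(N+sp)}\,dh$ (which uses $sp>0$) yields a constant multiple of $\|u\|_{L^p(\mathbb{R}^N)}^p$. On the \emph{near} piece $\{|x-y|<1\}$, I would start from the fundamental-theorem identity $u(x)-u(y)=\int_0^1 \nabla u(x+t(y-x))\cdot(y-x)\,dt$, valid first for $C_c^\infty(\mathbb{R}^N)$ and then extended to $W^{1,p}(\mathbb{R}^N)$ by density, and apply Jensen's inequality to obtain
\[
|u(x)-u(y)|^p\le |x-y|^p\int_0^1|\nabla u(x+t(y-x))|^p\,dt.
\]
Inserting this, changing variables $h=y-x$, and using the translation invariance of $\|\nabla u\|_{L^p(\mathbb{R}^N)}$ reduces the near piece to $\|\nabla u\|_{L^p}^p\int_{|h|<1}|h|^{(1-s)p-N}\,dh$, and the remaining radial integral is finite because $(1-s)p>0$.

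Summing the two contributions gives $[u]_{W^{s,p}(\mathbb{R}^N)}^p\le C\bigl(\|u\|_{L^p}^p+\|\nabla u\|_{L^p}^p\bigr)\le C\,\|u\|_{1,p}^p$, which is \eqref{emb_eq1} on $\mathbb{R}^N$; via the extension step this then transports to the Lipschitz domain case. The only delicate point is the density argument that promotes the fundamental-theorem identity from $C_c^\infty$ to $W^{1,p}$; once that is in place, everything else is Fubini together with the convergence of the two radial integrals above, both of which hold precisely by virtue of the hypothesis $0<s<1$, which ensures $sp>0$ and $(1-s)p>0$ simultaneously.
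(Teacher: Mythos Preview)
The paper does not supply its own proof of this statement; it is quoted as a known result from \cite{DPV12, DD12}, so there is nothing to compare against directly. Your argument is correct and is essentially the standard proof one finds in those references (in particular \cite[Proposition~2.2]{DPV12}): reduce the bounded Lipschitz case to $\mathbb{R}^N$ via a Sobolev extension operator, and on $\mathbb{R}^N$ split the double integral at $|x-y|=1$, bounding the far region by $\|u\|_{L^p}^p$ using $sp>0$ and the near region by $\|\nabla u\|_{L^p}^p$ via the fundamental theorem of calculus and $(1-s)p>0$.

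One small remark: your proof (correctly) produces a constant that depends on $\Omega$ through the extension operator bound $C_\Omega$, whereas the statement as written in the paper asserts $C=C(N,p,s)$ independent of the domain. For a general bounded Lipschitz domain the dependence on $\Omega$ is genuinely there via the extension step, so the discrepancy is in the paper's phrasing rather than in your argument.
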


\begin{theorem}\label{emb_2}(See \cite[Lemma 2.3]{GK22})
Let $\Omega\subset \mathbb{R}^N$ be a bounded domain with Lipschitz boundary $\partial\Omega$ and $0 < s < 1 < p <\infty$. Then there exists a constant $C = C(N, p, s, \Omega)>0$  such that 
\begin{equation}
\int_{\mathbb{R}^N}\int_{\mathbb{R}^N}\frac{| u(x)-u(y)|^p}{| x-y |^{N+ps}} dxdy \leq C \int_\Omega |\nabla u|^p dx\,,\,\,\, \forall \, u\in W^{1,p}_0(\Omega).\label{emb_eq2}
\end{equation}

\end{theorem}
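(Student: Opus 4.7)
The plan is to reduce the claim to the $\mathbb{R}^N$-version of the Sobolev-to-fractional-Sobolev embedding already recorded in Theorem \ref{emb_1}, by first extending $u$ by zero across $\partial\Omega$ and then using the Poincar\'e inequality on $W^{1,p}_0(\Omega)$ to absorb the $L^p$ piece of the resulting norm into the gradient term on the right-hand side.

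First I would fix $u\in W^{1,p}_0(\Omega)$ and denote by $\widetilde u$ its extension by $0$ to all of $\mathbb{R}^N$. A standard truncation/density argument (which requires nothing on $\partial\Omega$ beyond openness of $\Omega$) shows that $\widetilde u\in W^{1,p}(\mathbb{R}^N)$, with $\|\widetilde u\|_{L^p(\mathbb{R}^N)}=\|u\|_{L^p(\Omega)}$ and $\|\nabla\widetilde u\|_{L^p(\mathbb{R}^N)}=\|\nabla u\|_{L^p(\Omega)}$. Moreover, since $\widetilde u=u$ on $\Omega$ and $\widetilde u=0$ elsewhere, the double integral appearing in \eqref{emb_eq2} is precisely the Gagliardo seminorm $[\widetilde u]_{W^{s,p}(\mathbb{R}^N)}^p$; this is the natural reading (and the only sensible one) for the left-hand side when $u\in W^{1,p}_0(\Omega)$.

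Next I would invoke the second part of Theorem \ref{emb_1}, namely $W^{1,p}(\mathbb{R}^N)\hookrightarrow W^{s,p}(\mathbb{R}^N)$, applied to $\widetilde u$. This produces a constant $C_1=C_1(N,p,s)>0$ with
$$[\widetilde u]_{W^{s,p}(\mathbb{R}^N)}\;\le\;\|\widetilde u\|_{s,p}\;\le\;C_1\,\|\widetilde u\|_{W^{1,p}(\mathbb{R}^N)}\;=\;C_1\bigl(\|u\|_{L^p(\Omega)}^p+\|\nabla u\|_{L^p(\Omega)}^p\bigr)^{1/p}.$$
Since $\Omega$ is a bounded Lipschitz domain, the classical Poincar\'e inequality on $W^{1,p}_0(\Omega)$ supplies a constant $C_P=C_P(N,p,\Omega)>0$ with $\|u\|_{L^p(\Omega)}\le C_P\,\|\nabla u\|_{L^p(\Omega)}$. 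Combining these two estimates and raising to the $p$-th power yields
$$\int_{\mathbb{R}^N}\!\int_{\mathbb{R}^N}\frac{|u(x)-u(y)|^p}{|x-y|^{N+ps}}\,dx\,dy\;\le\;C_1^{\,p}\,(1+C_P^{\,p})\int_{\Omega}|\nabla u|^p\,dx,$$
and taking $C:=C_1^{\,p}(1+C_P^{\,p})$ gives a constant depending only on $N,p,s,\Omega$, as required.

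I do not foresee a serious obstacle here: the zero-extension step is immediate for $W^{1,p}_0$, Theorem \ref{emb_1} is already in hand, and the Poincar\'e inequality is classical on bounded domains. The only point that deserves a line of commentary is the identification of the $\mathbb{R}^N\times\mathbb{R}^N$ integral in \eqref{emb_eq2} with the Gagliardo seminorm of the zero extension, so that the passage from the embedding estimate on $\mathbb{R}^N$ back to an estimate on $\Omega$ is transparent.
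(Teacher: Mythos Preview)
Your argument is correct. Note, however, that the paper does not supply its own proof of this statement: Theorem~\ref{emb_2} is simply quoted from \cite[Lemma~2.3]{GK22}, so there is no in-paper proof to compare against. Your route---zero-extend to $\mathbb{R}^N$, invoke the $\mathbb{R}^N$-embedding from Theorem~\ref{emb_1}, then use Poincar\'e on $W^{1,p}_0(\Omega)$ to eliminate the $L^p$ piece---is the standard reduction and yields the stated constant dependence $C=C(N,p,s,\Omega)$. One minor remark: neither the zero-extension step nor the Poincar\'e inequality on $W^{1,p}_0(\Omega)$ actually requires the Lipschitz hypothesis (boundedness of $\Omega$ suffices), so your proof in fact works under slightly weaker assumptions than stated.
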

\begin{theorem}\label{FSE}(See \cite[Theorem 7.6]{L23})
 Let $0<s<1<p<N$ and $p_s^*=\frac{Np}{N-ps}$. Then we have the following continuous embedding:
     \begin{equation*}
      W^{s,p}(\mathbb{R}^N) \hookrightarrow L^q(\mathbb{R}^N),\  \  p \leq q \leq p_s^*.
     \end{equation*}
     %\begin{enumerate}[label=(\roman*)]
      %  \item 
       % \item $W^{s,p}(\mathbb{R}^N) \hookrightarrow L^q(\mathbb{R}^N), 0<s<1, \ sp=N, \ p \leq q <\infty.$
        %\item  $W^{s,p}(\mathbb{R}^N) \hookrightarrow C^{0,\alpha}(\mathbb{R}^N), \ 0<s<1, \ N<sp<\infty, \alpha=s-\frac{N}{p}.$
    %\end{enumerate}
    If $u$ vanishes at infinity, then there exists a best (Sobolev) constant $S_s>0$ such that
    \begin{equation}\label{FSEE}
         \| u\|_{L^{p_s^*}(\mathbb{R}^N)} \leq S_s\left(\int_{\mathbb{R}^N}\int_{\mathbb{R}^N}\frac{| u(x)-u(y)|^p}{| x-y |^{N+ps}} dxdy\right)^{\frac{1}{p}}.
    \end{equation}
\end{theorem}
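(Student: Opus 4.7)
The plan is to establish the sharp endpoint Gagliardo--Sobolev inequality \eqref{FSEE} first, and then to derive the continuous embedding $W^{s,p}(\mathbb{R}^N)\hookrightarrow L^q(\mathbb{R}^N)$ for the full intermediate range $p\le q\le p_s^*$ via H\"older interpolation. Since $C_c^\infty(\mathbb{R}^N)$ is dense in $W^{s,p}(\mathbb{R}^N)$, it suffices to prove \eqref{FSEE} for $u\in C_c^\infty(\mathbb{R}^N)$ and pass to the limit by a routine approximation argument. The ``vanishing at infinity'' assumption is essential at this step: the right-hand side of \eqref{FSEE} involves only the Gagliardo seminorm, which annihilates constants, so the inequality can only hold in a class where constants are ruled out.

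For the endpoint inequality, I would argue via a dyadic truncation / layer-cake scheme inspired by the classical De Giorgi proof of the Sobolev embedding. Writing
$$u_k(x) := \bigl(|u(x)| - 2^k\bigr)_+ \wedge 2^k, \qquad A_k := \{x\in\mathbb{R}^N : |u(x)| > 2^k\},$$
the layer-cake identity yields $\int_{\mathbb{R}^N} |u|^{p_s^*}\,dx \lesssim \sum_{k\in\mathbb{Z}} 2^{k p_s^*}|A_k|$, while a nonlocal isoperimetric-type lower bound relates the Gagliardo seminorm of $u_k$ to an appropriate power of $|A_{k+1}|$ (by pairing points inside $A_{k+1}$, where $u_k = 2^k$, with points outside $A_k$, where $u_k = 0$, in the double integral defining the seminorm). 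Summing over $k$ and applying H\"older with conjugate exponents $N/(sp)$ and $N/(N-sp)$ telescopes into \eqref{FSEE}. When $p=2$ one can short-circuit this argument via Plancherel and the Hardy--Littlewood--Sobolev inequality applied to the Riesz representation $u = I_s\bigl((-\Delta)^{s/2}u\bigr)$; for $p\neq 2$, however, one must either execute the dyadic argument carefully or invoke the Littlewood--Paley chain $W^{s,p}\hookrightarrow B^s_{p,p}\hookrightarrow L^{p_s^*}$.

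Once \eqref{FSEE} is available, the intermediate range is handled by interpolation. For $p\le q\le p_s^*$, let $\theta\in[0,1]$ be determined by $\tfrac{1}{q} = \tfrac{1-\theta}{p} + \tfrac{\theta}{p_s^*}$; then H\"older's inequality gives
$$\|u\|_{L^q(\mathbb{R}^N)} \le \|u\|_{L^p(\mathbb{R}^N)}^{1-\theta}\,\|u\|_{L^{p_s^*}(\mathbb{R}^N)}^{\theta},$$
and combining this with \eqref{FSEE} and the trivial bound $\|u\|_{L^p(\mathbb{R}^N)}\le \|u\|_{s,p}$ yields the continuous embedding $W^{s,p}(\mathbb{R}^N)\hookrightarrow L^q(\mathbb{R}^N)$ throughout the stated range.

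The main obstacle is the endpoint inequality \eqref{FSEE} for a general $p\in(1,N/s)$. The Fourier-analytic shortcut available at $p=2$ does not extend to $p\neq 2$, so one has to commit to a real-variable proof. The delicate point in the dyadic truncation scheme is the sharp geometric lower bound on $[u_k]_{W^{s,p}}$ in terms of $|A_{k+1}|$, which plays the role of a fractional isoperimetric inequality and is the nonlocal counterpart of the coarea-based argument used in the classical De Giorgi proof. Verifying that the resulting constant $S_s>0$ is independent of $u$ is then routine, since all estimates in the scheme are scale- and translation-invariant and depend only on $N$, $p$, and $s$.
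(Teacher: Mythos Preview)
The paper does not prove this theorem. It is quoted without proof from Leoni's textbook (the parenthetical ``See \cite[Theorem 7.6]{L23}'' in the statement is the entire justification), as one of several standard embedding results collected in the preliminaries section. So there is no ``paper's own proof'' to compare against.

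That said, your sketch is a reasonable outline of one of the standard real-variable proofs of the fractional Gagliardo--Sobolev inequality. The dyadic truncation scheme you describe is essentially the argument carried out in, e.g., Di Nezza--Palatucci--Valdinoci or Brasco--Parini, and the interpolation step for the intermediate range $p\le q\le p_s^*$ is straightforward and correct. One small caveat: your density claim is fine on all of $\mathbb{R}^N$, but the passage from the seminorm inequality for $C_c^\infty$ functions to general functions ``vanishing at infinity'' requires a little care, since such functions need not lie in $W^{s,p}(\mathbb{R}^N)$ (they may fail to be in $L^p$) and one typically approximates by truncation and mollification while controlling only the seminorm, not the full norm. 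This is routine but worth flagging, since you invoke density in $W^{s,p}(\mathbb{R}^N)$ rather than in the homogeneous space $\dot W^{s,p}(\mathbb{R}^N)$ where \eqref{FSEE} naturally lives.
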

%\vspace{0.5cm}
\iffalse
Next, the following important remark is crucial for our study. 
\begin{remark} Note that if $\Omega$ is a bounded domain of class $C^1$ or $\Omega=\mathbb{R}^N$, then
    \begin{equation}\label{eqloc-nonloc}
        W^{1,p}(\Omega)\hookrightarrow W^{s,p}(\Omega).
    \end{equation} 
\end{remark}
\noindent Clearly, the above embedding \eqref{eqloc-nonloc} is not true for our domain {\color{red}(is type of the domain likes $\mathbb{R}^N\setminus B_r$?. Better to mention here the types of the domain)}.
\fi
\subsection{Solution space setup} From now onwards, we shall assume $\Omega\subset \mathbb{R}^N$ is a bounded domain with a {\it Lipschitz} boundary $\partial \Omega$, and $p > 1$, $s \in (0,1)$ are real numbers such that $0<s<1<p<N$. Since we are dealing with the mixed local and nonlocal operators, we consider the following function space
$$   X_0^{s,p}(\Omega) = \{ u \in W^{1,p}(\mathbb{R}^N): u|_{\Omega} \in W_0^{1,p}(\Omega), \ u =0 \text{ in } \mathbb{R}^N \setminus \Omega \}$$  
endowed with the norm 
 $$   \|u\|_{X_0^{s,p}(\Omega)}
    =\left(\int_{\Omega}| \nabla u |^p dx+\int_{\mathbb{R}^N}\int_{\mathbb{R}^N}\frac{| u(x)-u(y)|^p}{| x-y |^{N+ps}} dxdy\right)^\frac{1}{p}.$$
{\it The function space $X_0^{s,p}(\Omega)$ will be used as a solution space to our eigenvalue problem \eqref{B}}. Using Theorem \ref{FSE} together with the embedding \eqref{emb_eq1} and the Poincar\'{e} inequality, one can see that there exist a constant $C>0$ such that
    \begin{equation}\label{norm-equiv}
        \| \nabla u \|_{L^p(\Omega)} \leq  \|u\|_{X_0^{s,p}(\Omega)} \leq C \| \nabla u \|_{L^p(\Omega)} \text{ for all } u \in X_0^{s,p}(\Omega).
    \end{equation}
Thus, the norm $\|\cdot\|_{W_0^{1,p}(\Omega)}$ is equivalent to $\|\cdot\|_{X_0^{s,p}(\Omega)}$ on $X_0^{s,p}(\Omega)$. Moreover, using the Sobolev inequality \eqref{SEE}, we have the mixed-type Sobolev inequality:
    \begin{equation}\label{MSI}
        \|u\|_{L^{p^*}(\Omega)} =\|u\|_{L^{p^*}(\mathbb{R}^N)}\leq S \|\nabla u\|_{L^p(\mathbb{R}^N)} \leq S\|u\|_{X_0^{s,p}(\Omega)}, \,\forall\, u \in X_0^{s,p}(\Omega).
    \end{equation}
Since $\Omega$ is bounded, we can also apply the H\"older's inequality in \eqref{MSI} to obtain the following continuous embedding result:
    \begin{equation}\label{CONT MET}
        X_0^{s,p}(\Omega) \hookrightarrow L^q (\mathbb{R}^N), \,\,1\leq q\leq p^*.
    \end{equation} 
    In addition, in light of the Theorem \ref{RK}, the embedding
    \begin{equation}\label{CPT MET}
        X_0^{s,p}(\Omega) \hookrightarrow L^q(\Omega)
    \end{equation}
    is compact for $1\leq q<p^*$. We have the following result.
\begin{theorem} The space $X_0^{s,p}(\Omega)$ is a Banach space for all $1\leq p\leq \infty$. Moreover, it is reflexive for $1\leq p<\infty$ and separable for $1<p<\infty$, respectively.
\end{theorem}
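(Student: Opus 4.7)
The strategy is to reduce the claim to the classical properties of $W_0^{1,p}(\Omega)$ by exhibiting a topological isomorphism between the two spaces. Define the zero-extension map $E \colon W_0^{1,p}(\Omega) \to X_0^{s,p}(\Omega)$ by $(Eu)(x) = u(x)$ for $x \in \Omega$ and $(Eu)(x)=0$ for $x \in \mathbb{R}^N \setminus \Omega$. Because $\partial\Omega$ is Lipschitz, for any $u \in W_0^{1,p}(\Omega)$ the function $Eu$ lies in $W^{1,p}(\mathbb{R}^N)$ (the weak gradient being the extension-by-zero of $\nabla u$), while Theorem~\ref{emb_2} guarantees that the Gagliardo seminorm of $Eu$ over $\mathbb{R}^N\times\mathbb{R}^N$ is finite and controlled by $\|\nabla u\|_{L^p(\Omega)}$. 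Hence $E$ is a well-defined linear bijection of $W_0^{1,p}(\Omega)$ onto $X_0^{s,p}(\Omega)$, with inverse the restriction $u \mapsto u|_\Omega$.

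The norm equivalence \eqref{norm-equiv}, combined with the Poincar\'e inequality on $W_0^{1,p}(\Omega)$, gives constants $c_1,c_2>0$ with
\[
c_1\|u\|_{W_0^{1,p}(\Omega)} \leq \|Eu\|_{X_0^{s,p}(\Omega)} \leq c_2 \|u\|_{W_0^{1,p}(\Omega)} \quad \text{for all } u \in W_0^{1,p}(\Omega),
\]
so $E$ is a linear homeomorphism between the two normed spaces. Consequently, any property of a normed space that is invariant under linear topological isomorphism transfers from $W_0^{1,p}(\Omega)$ to $X_0^{s,p}(\Omega)$.

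Now $W_0^{1,p}(\Omega)$ is a Banach space for every admissible $p$, is reflexive for $1<p<\infty$, and is separable for $1<p<\infty$, by the standard theory of Sobolev spaces (as recorded in the references cited at the beginning of Section~\ref{s2}). Since completeness, reflexivity, and separability are all preserved under the linear homeomorphism $E$, the corresponding assertions for $X_0^{s,p}(\Omega)$ follow at once. There is essentially no obstacle beyond verifying the well-definedness of $E$ (that the zero extension belongs to $W^{1,p}(\mathbb{R}^N)$ and has finite fractional seminorm), which is already supplied by the Lipschitz hypothesis on $\partial\Omega$ together with Theorem~\ref{emb_2}; no further fractional-Sobolev computation is needed.
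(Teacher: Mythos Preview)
Your approach is correct and takes a genuinely different route from the paper. The paper argues completeness directly by taking a Cauchy sequence in $X_0^{s,p}(\Omega)$, pushing it down to $W_0^{1,p}(\Omega)$ via $\|\nabla u\|_{L^p(\Omega)}\leq\|u\|_{X_0^{s,p}(\Omega)}$, and then invoking \eqref{norm-equiv} to close; for reflexivity and separability it instead builds an isometry $T:X_0^{s,p}(\Omega)\to L^p(\Omega)\times L^p(\mathbb{R}^N\times\mathbb{R}^N)$, $u\mapsto\bigl(\nabla u,\,(u(x)-u(y))/|x-y|^{N/p+s}\bigr)$, and inherits the properties from the product of $L^p$ spaces. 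You short-circuit both steps by recognizing that the zero-extension map $E$ is a linear topological isomorphism between $W_0^{1,p}(\Omega)$ and $X_0^{s,p}(\Omega)$ (thanks to \eqref{norm-equiv}), so all three properties transfer from the classical Sobolev space in one stroke. Your argument is more economical and conceptually cleaner; the paper's isometry has the mild advantage of reducing directly to $L^p$ rather than quoting the Sobolev-space facts, but otherwise your method is at least as good. One shared caveat: both arguments lean on \eqref{norm-equiv}, which in turn uses Theorem~\ref{emb_2} stated for $1<p<\infty$, so the endpoint cases $p=1$ and $p=\infty$ in the theorem statement are not fully justified by either proof---this is an issue inherited from the paper rather than a defect specific to your route.
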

\begin{proof}
    Let $(u_n)$ be a cauchy sequence in $X_0^{s,p}(\Omega)$. Since $\|\nabla u\|_{L^p(\Omega)} \leq \|u\|_{X_0^{s,p}(\Omega)}$ for every $u \in X_0^{s,p}(\Omega)$, we get $(\nabla u_n)$ is a cauchy sequence in $L^{p}(\Omega)$. Thus, there exists $w \in L^{p}(\Omega)$ such that $\nabla u_n \rightarrow w$ in $L^{p}(\Omega)$. For every $n \in \mathbb{N}$, we have $u_n \in W_0^{1,p}(\Omega)$. By the Poincar\'{e}'s inequality (see \cite{B11}, Corollary 9.19), $u \mapsto \|\nabla u\|_{L^p(\Omega)}$ gives a norm which is equivalent to $\|.\|_{W^{1,p}(\Omega)}$ in $W_0^{1,p}(\Omega)$. Since $W_0^{1,p}(\Omega)$ is a Banach space, there exists $u \in W_0^{1,p}(\Omega)$ such that $u_n \rightarrow u$ in $W_0^{1,p}(\Omega)$. Extending $u$ to $\mathbb{\mathbb{R}^N} \setminus \Omega$ by zero (still denoted by $u$), we get $u \in W^{1,p}(\mathbb{R}^N)$. Therefore, we obtain $u \in X_0^{s,p}(\Omega)$. On using $u_n \rightarrow u$ in $W_0^{1,p}(\Omega)$, we have 
    \begin{equation}\label{B-conv}
        \|\nabla u_n - \nabla u\|_{L^p(\Omega)} \rightarrow 0 \text{ as } n \rightarrow \infty.
    \end{equation}
    From the equations \eqref{norm-equiv} and \eqref{B-conv}, we get $u_n \rightarrow u$ in $X_0^{s,p}(\Omega)$. Therefore, $X_0^{s,p}(\Omega)$ is a Banach space for all $1\leq p<\infty$. The case $p=\infty$ can be proved following a similar argument. Now define  $T:X_0^{s,p}(\Omega) \rightarrow L^{p}(\Omega) \times L^p(\mathbb{R}^N \times \mathbb{R}^N)$ by $T(u)=(a_u,b_u)$, where $$ a_v(x)= \nabla v(x), \,\,\,
     b_u(x,y)= \frac{v(x)-v(y)}{| x-y| ^{\frac{N}{p}+s}}.$$ 
     Thus for every $u \in X_0^{s,p}(\Omega)$, we have
     $$\|Tu\|_{ L^p(\Omega) \times L^p(\mathbb{R}^N \times \mathbb{R}^N)}=\|u\|_{X_0^{s,p}(\Omega)}.$$ Therefore $T$ becomes an isometry that maps $X_0^{s,p}(\Omega)$ to a closed subspace of $L^{p}(\Omega) \times L^p(\mathbb{R}^N \times \mathbb{R}^N)$. Hence, we get the reflexivity of $X_0^{s,p}(\Omega)$ for $1\leq p<\infty$ and seperability for $1<p<\infty$, respectively.
\end{proof}
 
\iffalse
Here onwards, we will consider $\Omega \subset \mathbb{R}^N$ to be a simply connected bounded domain with boundary $\partial \Omega$ of class $C^1$ and $0\in\Omega$. Set $\mathcal{D}=\mathbb{R}^N \setminus \overline{\Omega}$. Naturally, the solution space for the problem \eqref{prob} is $X_0^{s,p}(\mathcal{D})$ (as in Case II) endowed with the norm $\|.\|_{X_0^{s,p}(\mathcal{D})}$. We now state the following embedding result for the exterior domain $\mathcal{D}$ followed by the definition of weak solutions.
\begin{theorem}\label{XET}
    Let $\mathcal{D}'$ be a relatively compact subset of $\mathcal{D}$. Then,
    \begin{equation*}
        X_0^{s,p}(\mathcal{D})|_{\mathcal{D}'} \subset L^q(\mathcal{D}'),
    \end{equation*}
    where the embedding is continuous for $1 \leq q \leq p^*$ and compact for $1 \leq q < p^*$.Moreover, there exists a constant $C>0$ such that
    \begin{equation}\label{MSID}
        \|u\|_{L^{p^*}(\mathbb{R}^N)}\leq C\|u\|_{X_0^{1,p}(\mathcal{D})}, \text{ for all } u \in X_0^{s,p}(\mathcal{D}).
    \end{equation}
    {and 
    \begin{equation}\label{MFSID}
        \|u\|_{L^{p_s^*}(\mathbb{R}^N)} \leq C \|u\|_{X_0^{s,p}(\mathcal{D})} \text{ for all } u \in X_0^{s,p}(\mathcal{D}).
    \end{equation}
    }
\end{theorem}
\fi
\begin{remark}
    Throughout the paper, we denote $$u_+:=\max\{u(x),0\}\text{ and } u_-:=\max\{-u(x),0\}.$$ Since $X_0^{s,p}(\Omega)$ is a vector space, for every $u\in X_0^{s,p}(\Omega)$, we have $|u|\in X_0^{s,p}(\Omega)$. Thus, $$u_\pm=\frac{1}{2}(|u|\pm u)\in X_0^{s,p}(\Omega).$$
\end{remark}
\subsection{Weak formulation} We now give the notion of solutions (weak) to our eigenvalue problem \eqref{B}.
\begin{definition}[Weak solution]
    Let $\frac{N}{sp}<q<\infty$, and assume that $V, g \in L^q(\Omega)$ where $V\geq 0, \ g > 0$ a.e. in $\Omega$. For $\lambda \in \mathbb{R}$, a function $u\in X_0^{s,p}(\Omega)$ is said to be a (weak) solution to the problem \eqref{B} if $u$ satisfies 
%\scalebox{0.84}{\parbox{1\linewidth}{%
\begin{align}\label{B2}
    \int_{\Omega}| \nabla u |^{p-2} \nabla u \cdot \nabla v dx&+\int_{\mathbb{R}^N}\int_{\mathbb{R}^N}\frac{| u(x)-u(y)|^{p-2}(u(x)-u(y))(v(x)-v(y))}{| x-y |^{N+ps}} dxdy \nonumber\\&+\int_{\Omega}V(x)| u |^{p-2}u v dx - \lambda\int_{\Omega}g(x)| u |^{p-2}u v dx=0,
\end{align}
\\for all $v \in X_0^{s,p}(\Omega)$. We say the $u\in X_0^{s,p}(\Omega)$ is a supersolution of \eqref{B}, if the integral in \eqref{B2} is nonnegative for every nonnegative function $v\in X_0^{s,p}(\Omega)$. Similarly, a function $u$ is a subsolution of \eqref{B} if $-u$ is a supersolution of \eqref{B}. 
\end{definition}
Accordingly, we define the eigenvalues and eigenfunctions to \eqref{B}. 
\begin{definition}[Eigenvalue and principal eigenvalue]\label{evalue}
\begin{itemize}
\item[1.] A real number $\lambda$ is said to be an eigenvalue with an eigenfunction $u$ of the weighted eigenvalue problem \eqref{B} if $u \in X_0^{s,p}(\Omega)\setminus\{0\}$ is a (weak) solution of \eqref{B} correspond to $\lambda$.
\item[2.] A principal eigenvalue of the eigenvalue problem \eqref{B} is an eigenvalue of \eqref{B} with a nonnegative eigenfunction.
\end{itemize}
\end{definition}
\noindent The assumptions on the functions $V, g\in L^q(\Omega)$ for $\frac{N}{sp}<q<\infty$ is intuitive. Since the weak solutions of problem \eqref{B} are finally belongs to $W^{s,p}(\mathbb{R}^N)$ and by the Theorem \ref{FSE}, the elements of $W^{s,p}(\mathbb{R}^N)$ are integrable up to the critical power $p_s^*$, hence using the H\"older inequality with exponents $\alpha=\frac{N}{sp}$ and $\alpha'=\frac{N}{N-sp}$ we have $$\int_{\Omega}f(x)| u |^pdx<\infty\,,\,\,\text{for any } \,f\in L^{\frac{N}{ps}}(\Omega)\,\text{ and } u\in W^{s,p}(\mathbb{R}^N).$$ 
This implies that the integrals in \eqref{B2} are well-defined. Henceforth, we will use the following notation throughout the article: \begin{align}
        H_{s,p}(u,v)&:=\int_{\Omega}| \nabla u |^{p-2} \nabla u \cdot \nabla v dx\nonumber\\
        &\hspace{2cm}+\int_{\mathbb{R}^N}\int_{\mathbb{R}^N}\frac{| u(x)-u(y)|^{p-2}(u(x)-u(y))(v(x)-v(y))}{| x-y |^{N+ps}} dxdy.\label{Hsp}
    \end{align}
Note that the problem \eqref{B} is the Euler-Lagrange equation associated with the energy functional 
\begin{align}\label{B-eq-energy}
        I_\lambda(u):= \|u\|_{X_0^{s,p}(\Omega)}^p + \int_{\Omega}V(x)| u |^{p} dx  - \lambda\int_{{\Omega}}g(x)| u |^{p} dx.
    \end{align}
Hence, the problem \eqref{B} has a variational structure. We next define the {\it Rayleigh quotient} corresponding to the problem \eqref{B}, which will be used to prove the existence of weak solutions.
\begin{definition}\label{d-rayleigh}
    The {\it Rayleigh quotient} corresponding to the eigenvalue problem \eqref{B} is given by
    %\scalebox{0.84}{\parbox{1\linewidth}{%
    \begin{align}\label{B-eq-rayleigh}
        \mathcal{R} &:= \inf\limits_{u\in X_0^{s,p}(\Omega)\setminus\{0\}}\frac{ \|u\|_{X_0^{s,p}(\Omega)}^p+ \int_{\Omega}V(x)| u |^{p} dx}{\int_{\Omega}g(x)| u |^{p} dx}\nonumber \\
        &=\inf\left\{ \|u\|_{X_0^{s,p}(\Omega)}^p+ \int_{\Omega}V(x)| u |^{p} dx \mid u \in X_0^{s,p}(\Omega) \setminus\{0\}, \int_{\Omega}g(x)| u |^{p} dx =1 \right\}.
    \end{align}
\end{definition}
 
\subsection{Genus and Ljusternik-Schnirelmann category theory} We now give the notion of {\it{genus}} and discuss its fundamental properties, which will be used to prove the existence of a monotone sequence of positive eigenvalues to \eqref{B} with the help of \textit{Ljusternik-Schnirelmann category theory} \cite{S88}.
\begin{definition}[Genus]
    Let $X$ be a Banach space and $\Sigma$ be defined by 
    \begin{align*}
        \Sigma = \{A \subset X \setminus \{0\} \mid A \text{ is compact and }A=-A \}.
    \end{align*}
Then for $A \in \Sigma, $ the Genus of $A$ is given by
$$   \gamma (A)=\min \{k \in \mathbb{N} \mid \text{there exist } \phi \in C(A,\mathbb{R}^k \setminus \{0\}), \ \phi(-x)=-\phi(x) \}.$$
We say $\gamma (A)= \infty$ if the minimum does not exist.
\end{definition}
\begin{theorem}[See \cite{S88}, Proposition 2.3]\label{genus}
Let $A,B \in \Sigma$.  
\begin{enumerate}[(i)]
    \item If $x \neq 0,$ $\gamma(\{x\} \cup \{-x\})=1$.
    \item If there exist odd map $f \in C(A,B)$, then $\gamma(A) \leq \gamma(B) $.
    \item If $A \subset B$, then $\gamma(A) \leq \gamma(B)$.
    \item $\gamma(A \cup B) \leq \gamma(A) +\gamma(B)$.
    \item If $A$ is compact and $\gamma(A) < \infty$, there exist $\delta >0$ such that the neighbourhood $N_{\delta}(A) \in \Sigma$ and $\gamma(N_{\delta}(A))=\gamma(A)$.
    \item Let $Z$ be a subspace of $E$ with codimension $k$ and $\gamma(A)>k$, then $A \cap Z \neq \emptyset$.
\end{enumerate}
\end{theorem}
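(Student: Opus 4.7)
The statement collects six standard properties of the Krasnoselskii genus, so the plan is to dispatch them in the order shown, moving from the elementary observations to the two items that genuinely use a Tietze-type extension.

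For (i), the plan is to exhibit an explicit odd continuous map: set $\phi(x)=1$, $\phi(-x)=-1$, giving $\phi\in C(\{x,-x\},\mathbb{R}\setminus\{0\})$ with $\phi(-y)=-\phi(y)$, so $\gamma(\{x\}\cup\{-x\})\le 1$; and $\gamma\ge 1$ since the set is nonempty. For (ii), take $k=\gamma(B)<\infty$ and an odd $\phi\in C(B,\mathbb{R}^k\setminus\{0\})$ realizing it; then $\phi\circ f\in C(A,\mathbb{R}^k\setminus\{0\})$ is odd, yielding $\gamma(A)\le k$. Property (iii) is immediate from (ii) applied to the inclusion map, which is trivially odd and continuous.

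Items (iv) and (v) are the heart of the argument and both rely on the Tietze extension theorem in the symmetric form. For (v), let $\gamma(A)=k$ and pick an odd $\phi\in C(A,\mathbb{R}^k\setminus\{0\})$. Extend $\phi$ to $\tilde\phi\in C(X,\mathbb{R}^k)$ by Tietze and then symmetrize via $\Phi(x):=\tfrac12(\tilde\phi(x)-\tilde\phi(-x))$, which is odd, continuous on $X$, and agrees with $\phi$ on $A$. Because $A$ is compact and $\Phi$ is nonzero on $A$, continuity produces an open symmetric neighborhood $N_\delta(A)$ on which $\Phi\neq 0$; restricting $\Phi$ to the closure of $N_\delta(A)$ (which is still in $\Sigma$ after shrinking $\delta$) gives $\gamma(N_\delta(A))\le k$, while (iii) gives the reverse inequality. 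For (iv), construct such symmetric extensions $\Phi_A\in C(X,\mathbb{R}^k)$ and $\Phi_B\in C(X,\mathbb{R}^l)$ of the minimizing maps on $A$ and $B$; then $(\Phi_A,\Phi_B)\colon A\cup B\to \mathbb{R}^{k+l}$ is odd and continuous, and on any point of $A\cup B$ at least one of the two components is nonzero, hence the map lands in $\mathbb{R}^{k+l}\setminus\{0\}$.

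Finally, for (vi), write $X=Z\oplus W$ with $\dim W=k$ and let $P\colon X\to W\cong\mathbb{R}^k$ be the continuous linear projection along $Z$; then $P$ is odd. Argue by contradiction: if $A\cap Z=\emptyset$, then $P|_A\in C(A,\mathbb{R}^k\setminus\{0\})$, forcing $\gamma(A)\le k$ and contradicting $\gamma(A)>k$. The main obstacle in the whole scheme is the symmetric Tietze extension used in (iv) and (v): one must verify that the symmetrization $\tfrac12(\tilde\phi(x)-\tilde\phi(-x))$ still restricts to $\phi$ on $A$ (which uses that $A=-A$ and that $\phi$ is already odd) and that the nonvanishing set is an open symmetric neighborhood lying in $\Sigma$; everything else is bookkeeping.
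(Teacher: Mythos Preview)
The paper does not prove this theorem at all; it is quoted verbatim from Szulkin \cite{S88}, Proposition~2.3, and no argument is given. There is therefore nothing to compare against. Your proposal reproduces the standard textbook proof of the Krasnoselskii genus properties (as found e.g.\ in Rabinowitz or Struwe), and the arguments are essentially correct.

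Two small points are worth flagging. In (v) you assert that $\overline{N_\delta(A)}\in\Sigma$ ``after shrinking $\delta$'', but the paper's $\Sigma$ consists of \emph{compact} symmetric subsets of $X\setminus\{0\}$, and in an infinite-dimensional Banach space the closed $\delta$-neighbourhood of a compact set is generally not compact. This is an inconsistency in the paper's definition/statement rather than a flaw in your reasoning: in the references where (v) is actually proved, genus is defined on closed symmetric sets, and then your Tietze-plus-symmetrization argument goes through verbatim. In (vi) the continuity of the linear projection $P$ onto the finite-dimensional complement $W$ requires $Z$ to be closed; this hypothesis is implicit in the usual formulation and should be stated.
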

\noindent We conclude the present section with the following version of the {\it Ljusternik-Schnirelmann principle} \cite{L06, S88}.
\begin{theorem}\label{LS}
    Let $X$ be a real reflexive Banach space and $\lambda>0$. Let $J=T-\lambda H: X\rightarrow\mathbb{R}$, where $T,H$ are $C^1$, even functionals with $T(0)=H(0)=0$ and $T$ is bounded from below on $X$.  Suppose, $T$ satisfies the {\it Palais-Smale condition} on $\mathcal{M}=\{u \in X: H(u)=1\}$. Then the problem 
    \begin{equation*}
        T'(u)=\lambda H'(u)
    \end{equation*}
    has a sequence of eigenvalues $(\lambda_n)$ defined as a critical values of $T$, given by
    \begin{equation*}
        \lambda_n=\inf_{A \in \Sigma_n} \max_{u \in A} T(u)
    \end{equation*}
    where $\Sigma_n= \{ A \in \Sigma \mid \gamma(A) \geq n \}$ and $\Sigma = \{A \subset X \setminus \{0\} \mid A \text{ is compact and }A=-A \}$. 
\end{theorem}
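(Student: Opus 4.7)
The plan is to establish the theorem by the classical minimax/deformation argument of Ljusternik and Schnirelman, using the genus properties of Theorem~\ref{genus} as the combinatorial device that keeps the $\mathbb{Z}_2$-symmetry $u\mapsto -u$ under control. First I would verify that the levels
\[
\lambda_n \;=\; \inf_{A\in\Sigma_n}\max_{u\in A} T(u)
\]
are well-defined and finite: since $T$ is bounded below on $X$, $\lambda_n\geq\inf_X T>-\infty$; and since for each $n\geq 1$ one can produce compact symmetric subsets of $\mathcal{M}$ of genus $n$ (e.g.\ by radially projecting an $(n-1)$-sphere inside an $n$-dimensional subspace of $X$ onto $\mathcal{M}$ via the odd map $u\mapsto u/H(u)^{1/p}$), one has $\lambda_n<\infty$. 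The inclusion $\Sigma_{n+1}\subset\Sigma_n$ then gives the monotonicity $\lambda_n\leq\lambda_{n+1}$.

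The core of the argument is to show that each $\lambda_n$ is a critical value of $T$ restricted to $\mathcal{M}$. Arguing by contradiction, assume $\lambda_c:=\lambda_n$ admits no critical point at its level. Because $H\in C^1$ and $H'(u)\neq 0$ on $\mathcal{M}$ in the settings of interest, $\mathcal{M}$ is a $C^1$ Banach submanifold of $X$ and there is a well-defined constrained gradient $\nabla_{\mathcal{M}}T$. The Palais-Smale assumption together with the absence of critical points at level $\lambda_c$ yields, in standard fashion, a quantitative bound
\[
\inf\bigl\{\|\nabla_{\mathcal{M}}T(u)\|_{*}\,:\,u\in\mathcal{M},\ |T(u)-\lambda_c|\leq 2\delta\bigr\}\;\geq\;c\,>\,0
\]
for suitable $c,\delta>0$. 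One then constructs an \emph{odd} locally Lipschitz pseudo-gradient vector field $W$ tangent to $\mathcal{M}$ in a neighbourhood of the strip $T^{-1}([\lambda_c-\delta,\lambda_c+\delta])$; oddness is obtained by replacing $W(u)$ by $\tfrac{1}{2}(W(u)-W(-u))$, which remains an admissible pseudo-gradient thanks to the $\mathbb{Z}_2$-invariance of $T$, $\mathcal{M}$ and the bound above. Integrating $-W$, cut off away from the level set, produces an \emph{odd} continuous deformation $\eta:\mathcal{M}\to\mathcal{M}$ and some $\varepsilon\in(0,\delta)$ satisfying $\eta(T^{\lambda_c+\varepsilon})\subset T^{\lambda_c-\varepsilon}$, where $T^{a}:=\{u\in\mathcal{M}:T(u)\leq a\}$.

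With the deformation in hand I conclude in the standard way. By definition of $\lambda_n$, choose $A\in\Sigma_n$ with $\max_{u\in A}T(u)<\lambda_c+\varepsilon$, so $A\subset T^{\lambda_c+\varepsilon}$. Since $\eta$ is continuous and odd and $A$ is compact and symmetric, $\eta(A)$ is compact and symmetric; by Theorem~\ref{genus}(ii) applied to $\eta|_A$, $\gamma(\eta(A))\geq\gamma(A)\geq n$, so $\eta(A)\in\Sigma_n$. But $\max_{\eta(A)}T\leq \lambda_c-\varepsilon$, contradicting the definition of $\lambda_n$. Hence $\lambda_n$ is a critical value of $T|_{\mathcal{M}}$, and the Lagrange multiplier rule at the corresponding critical point $u_n\in\mathcal{M}$ supplies a multiplier $\lambda$ realizing $T'(u_n)=\lambda H'(u_n)$.

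The step I expect to be the main obstacle is the construction of the \emph{equivariant} deformation on the Banach manifold $\mathcal{M}$: obtaining a pseudo-gradient is classical, but symmetrizing it while simultaneously retaining tangency to $\mathcal{M}$, local Lipschitz regularity, and compatibility with the cut-offs near $T^{-1}(\lambda_c)$ requires delicate use of the $C^1$-structure of $H$ and of the $\mathbb{Z}_2$-action. Once this equivariant deformation lemma is secured, the genus-preservation property in Theorem~\ref{genus}(ii) performs the bookkeeping and the minimax scheme closes the argument essentially formally.
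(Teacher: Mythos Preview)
The paper does not prove Theorem~\ref{LS}; it is stated as a known version of the Ljusternik--Schnirelmann principle and simply cited from \cite{L06, S88}. There is therefore no ``paper's own proof'' to compare against: the theorem functions as an imported black box that is later applied in Theorem~\ref{B-thm6}.

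Your sketch follows the classical route one would find in those references---minimax levels via genus, an equivariant deformation lemma built from a symmetrized pseudo-gradient flow on the constraint manifold $\mathcal{M}$, and the contradiction argument using Theorem~\ref{genus}(ii). That is the standard and correct strategy. One point worth tightening: as stated in the paper, $\Sigma$ consists of compact symmetric subsets of $X\setminus\{0\}$, not of $\mathcal{M}$, yet the entire deformation argument (and the application in Theorem~\ref{B-thm6}) implicitly takes $A\subset\mathcal{M}$. You correctly work on $\mathcal{M}$, which is what the cited sources do, but you should flag that the paper's formulation is slightly loose on this point; otherwise the infimum $\inf_{A\in\Sigma_n}\max_A T$ could be $\inf_X T$ and carry no variational information about the constrained problem.
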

\section{Comparison principles and useful estimates}
Here, we develop comparison principles and a logarithmic estimate related to our eigenvalue problem, which will be used several times throughout the paper. We first prove the following {\it weak comparison principle} for weak solutions of \eqref{B}.
\begin{theorem}[\textbf{Weak comparison Principle}]\label{B-CP}
    Let $u, v \in X_0^{s,p}(\Omega)$ such that 
    \begin{equation}\label{B-CP1}
        H_{s,p}(u,\psi)+\int_{\Omega}V|u|^{p-2}u\psi \leq H_{s,p}(v,\psi)+\int_{\Omega}V|v|^{p-2}v\psi 
    \end{equation}
    for all $\psi \in X_0^{s,p}(\Omega)$ such that $\psi \geq 0$ and $u \leq v$ in $\mathbb{R}^N \setminus \Omega$. Then, $u \leq v$ in $\mathbb{R}^N$.
\end{theorem}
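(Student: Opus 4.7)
The plan is to test the hypothesis \eqref{B-CP1} with the admissible choice $\psi = w_+$, where $w := u - v$. Since $u \le v$ on $\mathbb{R}^N \setminus \Omega$, the truncation $w_+$ vanishes there, and the lattice property of $X_0^{s,p}(\Omega)$ (noted in the Remark preceding the weak formulation) places $w_+$ in $X_0^{s,p}(\Omega)$. Substituting $\psi = w_+$ into \eqref{B-CP1} and rearranging yields
\begin{equation*}
\big[H_{s,p}(u,w_+) - H_{s,p}(v,w_+)\big] + \int_\Omega V(x)\big(|u|^{p-2}u - |v|^{p-2}v\big)\, w_+ \,dx \le 0.
\end{equation*}
The strategy is to show that each of the three contributions on the left is pointwise nonnegative; together with this reverse inequality, each must then vanish, and the vanishing of the local piece will force $w_+ \equiv 0$.

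For the local Dirichlet contribution I would use $\nabla w_+ = \chi_{\{u > v\}}\,\nabla(u-v)$ a.e., so the integrand reduces on $\{u > v\}\cap\Omega$ to $\big(|\nabla u|^{p-2}\nabla u - |\nabla v|^{p-2}\nabla v\big)\cdot(\nabla u - \nabla v)$, which is nonnegative by the classical strict monotonicity of the vector field $\xi \mapsto |\xi|^{p-2}\xi$ on $\mathbb{R}^N$. The potential term is nonnegative because $V \ge 0$, $w_+ \ge 0$, and the scalar map $t \mapsto |t|^{p-2}t$ is increasing, so $(|u|^{p-2}u - |v|^{p-2}v)\, w_+ \ge 0$ a.e. The nonlocal piece is the delicate one: writing $\Phi_p(t):=|t|^{p-2}t$ and $\phi:=u-v$, a direct case analysis on the four sign patterns of $(\phi(x),\phi(y))$ gives
\begin{equation*}
\big[\Phi_p(u(x)-u(y)) - \Phi_p(v(x)-v(y))\big]\big[w_+(x) - w_+(y)\big] \ge 0
\end{equation*}
pointwise a.e. Indeed, when $\phi(x),\phi(y)\ge 0$ the second bracket equals $\phi(x)-\phi(y)=(u(x)-u(y))-(v(x)-v(y))$ and the statement reduces to the scalar monotonicity of $\Phi_p$; when $\phi(x)>0>\phi(y)$ both brackets are strictly positive; the case $\phi(y)>0>\phi(x)$ is symmetric with both brackets negative; and when $\phi(x),\phi(y)\le 0$ the second bracket vanishes.

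Combining the three nonnegative contributions with the reverse inequality forces each to vanish; in particular,
\begin{equation*}
\int_{\{u>v\}\cap\Omega} \big(|\nabla u|^{p-2}\nabla u - |\nabla v|^{p-2}\nabla v\big)\cdot(\nabla u - \nabla v)\,dx = 0.
\end{equation*}
The strict monotonicity of $\xi\mapsto|\xi|^{p-2}\xi$ then yields $\nabla u = \nabla v$ a.e.\ on $\{u>v\}$, so $\nabla w_+ \equiv 0$ a.e.\ in $\Omega$. Since $w_+ \in W_0^{1,p}(\Omega)$ (via the norm equivalence \eqref{norm-equiv}), the Poincar\'e inequality forces $w_+ \equiv 0$, i.e.\ $u \le v$ in $\mathbb{R}^N$.

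The step that requires the most care is the pointwise nonnegativity of the nonlocal integrand: the four sign cases for $(\phi(x),\phi(y))$ must be handled and the conclusion relies on the algebraic identity $s - t = \phi(x) - \phi(y)$ linking the ``inner'' differences to the outer test function. A secondary bookkeeping point is the admissibility of $(u-v)_+$ as a test function, which uses the hypothesis $u \le v$ on $\mathbb{R}^N\setminus\Omega$ together with the lattice structure of $X_0^{s,p}(\Omega)$.
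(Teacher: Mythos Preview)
Your proof is correct and follows essentially the same strategy as the paper's: test \eqref{B-CP1} with $\psi=(u-v)_+$, show that each of the local, nonlocal, and potential contributions carries the right sign, and conclude that all three vanish. The only technical differences are that the paper establishes the sign conditions via the integral representation $|b|^{p-2}b-|a|^{p-2}a=(p-1)(b-a)\int_0^1|a+t(b-a)|^{p-2}\,dt$ rather than your direct monotonicity/case analysis, and the paper reads off $(u-v)_+=0$ from the potential term whereas you use $\nabla(u-v)_+=0$ together with Poincar\'e; both routes are standard and equally valid.
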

\begin{proof}
    Taking $\psi= (u-v)_+$ in \eqref{B-CP1}, we get
    \begin{equation}\label{B-CP2}
        I_1+I_2+I_3 \geq 0,
    \end{equation}
    where
    %\scalebox{0.81}{\parbox{1\linewidth}{% 
    \begin{align}\label{B-CP-I}
        &I_1=\int_{\Omega}\left(| \nabla v |^{p-2} \nabla v  -| \nabla u |^{p-2} \nabla u \right)\cdot\nabla (u-v)_+dx ,\nonumber \\
        &I_2 =\int_{\mathbb{R}^N}\int_{\mathbb{R}^N} k(u,v)((u-v)_+(x)-(u-v)_+(y)) dxdy, \nonumber \\
       \,\,\text{and }\,  &I_3= \int_{\Omega}(|v|^{p-2}v-|u|^{p-2}u)(u-v)_+ dx.
    \end{align}
    Here $\displaystyle{k(u,v):=\frac{(| v(x)-v(y)|^{p-2}(v(x)-v(y)-| u(x)-u(y)|^{p-2}(u(x)-u(y))} {| x-y |^{N+ps}}}$.
    \iffalse
    Proceeding as in the proof of Theorem 5.1 in \cite{LG24}, from \eqref{B-CP2} and by using the \textcolor{blue}{equation}
    \fi
    Next, using the inequality
    \begin{equation}\label{B-eq-ab}
        | b| ^{p-2}b-| a| ^{p-2}a=(p-1)(b-a)\int_0^1 | a+s(b-a)| ^{p-2}ds,
    \end{equation}
     for $a=\nabla v, b=\nabla u$ in $I_1$, $a= v(x)-v(y), b=u(x)-u(y)$ in $I_2$, and $a=v, b=u$ in $I_3$ respectively and then combining them in the equation \eqref{B-CP2}, we obtain
    \begin{equation}\label{B-CP-3}
        J_1+J_2+J_3 \geq 0,
    \end{equation}
    where 
    \begin{align}\label{B-CP-J1}
        J_1&=(p-1)\int_{\Omega}\nabla(u-v)_+ \cdot \nabla(v-u)\int_0^1 | \nabla u+t\nabla(v-u)| ^{p-2}dtdx \nonumber \\
        &=-(p-1)\int_{\Omega}|\nabla(u-v)_+|^2\int_0^1 | \nabla u+t\nabla(v-u)| ^{p-2}dtdx \leq 0,
    \end{align}
    %\scalebox{0.9}{\parbox{1\linewidth}{% 
    \begin{align}\label{B-CP-J2}
        J_2 &=(p-1)\int_{\mathbb{R}^N}\int_{\mathbb{R}^N}\frac{(v(y)-v(x)-(u(y)-u(x)))}{| x-y |^{N+ps}}((u(y)-v(y))_+ -(u(x)-v(x))_+) \nonumber\\
      &\hspace{3cm}\times \int_0^1| u(y)-u(x)+t(v(y)-v(x)-u(y)+u(x))| ^{p-2} dt dx dy \nonumber \\
      & \leq -(p-1)\int_{\mathbb{R}^N}\int_{\mathbb{R}^N}\frac{(((u(y)-v(y))_+ -(u(x)-v(x))_+))^2}{| x-y |^{N+ps}}\nonumber\\
      &\hspace{2cm}\times \int_0^1| u(y)-u(x)+t(v(y)-v(x)-u(y)+u(x))| ^{p-2} dt dx dy \leq 0,
    \end{align}
    and 
    \begin{align}\label{B-CP-J3}
        J_3&=(p-1)\int_{\Omega}(u-v)_+(v-u)\int_0^1|  u+t(v-u)| ^{p-2}dtdx \nonumber \\
        &=-(p-1)\int_{\Omega}|(u-v)_+|^2\int_0^1|  u+t(v-u)| ^{p-2}dtdx\leq 0.
    \end{align}
    \iffalse
    Thus, from the equations \eqref{B-CP-3}, \eqref{B-CP-J1}, \eqref{B-CP-J2} and \eqref{B-CP-J3},
    \fi
   Since all the integral values $J_1, J_2$ and $J_3$ are non-positive, hence from \eqref{B-CP-3} we have $J_1+J_2+J_3=0$. This implies that $\nabla(u-v)_+=0$ (from $J_1$) and $((u(y)-v(y))_+ =(u(x)-v(x))_+$ a.e. (from $J_2$) in the set $\{x:u(x)\geq v(x) \}$ and from $J_3$, $(u-v)_+=0$ a.e. in $\Omega$. Thus it follows that $u(x)\leq v(x)$ a.e. in $\Omega$. Since $u=v=0$ on $\mathbb{R}^N \setminus \Omega$, therefore we get $v(x) \geq u(x)$ a.e in $\mathbb{R}^N$.  
   \end{proof}
\noindent The following logarithmic estimate is crucial in obtaining a strong comparison principle.
\begin{lemma}\label{B-lem1}
   Assume $x_0\in \Omega$, and let $R>0$ such that $B_{R}(x_0) \subset \Omega$. Let $u\in X_0^{s,p}(\Omega)$ be a weak supersolution to the problem 
    \begin{align}\label{B1}
    \begin{split}
        L_{s,p}u +V|u|^{p-2}u &=0 \ \text{in } \Omega  \\
        u&=0 \ \text{in } \mathbb{R}^N \setminus \Omega.
        \end{split}
    \end{align}
     Suppose $u \geq 0$ in $B_R(x_0)$. Then, for $0<r<\frac{R}{2}$, there exist a constant $C=C(p)>0$ such that for $\delta>0$,
    \begin{align}\label{B-l1}
        \int_{B_r(x_0)} |\nabla &\log(u+\delta)|^p dx 
+  \int_{B_r(x_0)}\int_{B_r(x_0)}  \frac{1}{|x-y|^{N+sp}}\left|\log\left( \frac{u(x)+\delta)}{u(y)+\delta} \right)\right|^p dy dx \nonumber \\
 &\leq Cr^N(r^{-p}+r^{-sp}+\delta^{1-p} R^{-sp}\, \operatorname{Tail}(u_-;x_0, R)^{p-1}) +r^{\frac{Np-N+sp}{p}}\|V\|_{L^{p_s^*}(\Omega)}.
    \end{align}
\end{lemma}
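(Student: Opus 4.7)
The plan is to adapt the logarithmic Caccioppoli estimate from \cite{DKP16} for the fractional $p$-Laplacian to the present mixed setting, collecting the extra contributions produced by the local gradient and the potential $V$. I would fix a cutoff $\eta\in C_c^\infty(B_{3r/2}(x_0))$ with $\eta\equiv 1$ on $B_r(x_0)$, $0\le\eta\le 1$, and $|\nabla\eta|\le C/r$, and set $\bar u:=u+\delta>0$ on the support of $\eta$. The admissible test function is $\psi:=\eta^p\bar u^{1-p}\in X_0^{s,p}(\Omega)$, which is non-negative and supported in $B_{3r/2}\subset B_R(x_0)$. Inserting $\psi$ into the supersolution inequality for \eqref{B1} gives
\[ H_{s,p}(u,\psi)+\int_\Omega V(x)\,u^{p-1}\psi\,dx\;\ge\;0, \]
and the three resulting pieces have to be analysed separately.

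For the local gradient, expanding $\nabla\psi=p\eta^{p-1}\bar u^{1-p}\nabla\eta-(p-1)\eta^p\bar u^{-p}\nabla u$ and using $\nabla\log\bar u=\nabla u/\bar u$, one obtains
\[ \int_\Omega|\nabla u|^{p-2}\nabla u\cdot\nabla\psi\,dx=p\!\int\eta^{p-1}|\nabla\log\bar u|^{p-2}\nabla\log\bar u\cdot\nabla\eta\,dx-(p-1)\!\int\eta^p|\nabla\log\bar u|^p\,dx. \]
Young's inequality with conjugate exponents $\tfrac{p}{p-1}$ and $p$ absorbs the cross term into a fraction of $\int\eta^p|\nabla\log\bar u|^p\,dx$ and leaves an error controlled by $C\!\int|\nabla\eta|^p\,dx\le C\,r^{N-p}$. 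For the potential, $u^{p-1}\bar u^{1-p}\le 1$ together with H\"older's inequality with exponents $p_s^*$ and $Np/(Np-N+sp)$ gives
\[ \int_\Omega V\,u^{p-1}\psi\,dx\le\int_{B_{3r/2}}V\,dx\le|B_{3r/2}|^{1-1/p_s^*}\|V\|_{L^{p_s^*}(\Omega)}\le C\,r^{(Np-N+sp)/p}\|V\|_{L^{p_s^*}(\Omega)}, \]
which is exactly the last term of \eqref{B-l1}.

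The nonlocal piece is the technical heart of the proof. I would split $\mathbb R^N\times\mathbb R^N$ into the near-diagonal part $B_R\times B_R$ and the far parts where at least one variable lies outside $B_R(x_0)$. On the near-diagonal part, the pointwise algebraic inequality from \cite{DKP16} bounds $|u(x)-u(y)|^{p-2}(u(x)-u(y))(\psi(x)-\psi(y))$ above by $-c\,(\max\{\eta(x),\eta(y)\})^p\bigl|\log\tfrac{\bar u(x)}{\bar u(y)}\bigr|^p+C\,|\eta(x)-\eta(y)|^p$; after moving the first term to the left, it produces the $\log$ contribution on the left-hand side of \eqref{B-l1}, while $|\eta(x)-\eta(y)|^p\le C|x-y|^p/r^p$ yields the bound $C\,r^{N-sp}$ after integrating against $|x-y|^{-N-sp}$ over $B_r\times B_r$. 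On the far part, with $x\in\mathrm{supp}\,\psi\subset B_{3r/2}$ and $y\in\mathbb R^N\setminus B_R(x_0)$, the assumption $u\ge 0$ in $B_R$ allows one to estimate $|u(x)-u(y)|^{p-2}(u(x)-u(y))\le C(u(x)^{p-1}+u_-(y)^{p-1})$; using $|x-y|\ge\tfrac12|y-x_0|$, $\psi(x)\le\delta^{1-p}$, and the definition \eqref{B-tail} of the tail, the $u_-$ contribution integrates to $C\,\delta^{1-p}R^{-sp}\operatorname{Tail}(u_-;x_0,R)^{p-1}\,r^N$, while the $u(x)^{p-1}$ part is absorbed by the terms already controlled.

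The main obstacle is the nonlocal piece: one has to identify the correct pointwise inequality that isolates $|\log(\bar u(x)/\bar u(y))|^p$ with the right sign and, crucially, to exploit only the local positivity $u\ge 0$ in $B_R$ (rather than in all of $\mathbb R^N$) so that the tail error is expressed through $u_-$ rather than through $u$ itself. Once the three pieces are combined and one restricts to $B_r(x_0)$ where $\eta\equiv 1$, the claimed inequality \eqref{B-l1} with $C=C(p)$ follows at once.
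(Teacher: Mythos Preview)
Your proposal is correct and follows essentially the same approach as the paper: the same test function $\psi=\eta^p(u+\delta)^{1-p}$ with a cutoff supported in $B_{3r/2}$, the same Young's inequality treatment of the local gradient term, the same H\"older argument for the potential term, and the DKP16 logarithmic estimate for the nonlocal piece. The only cosmetic difference is that the paper splits the nonlocal integral at the cutoff scale $B_{2r}\times B_{2r}$ (and then quotes \cite[Lemma~1.3]{DKP16} verbatim for both the near and far contributions), whereas you split at $B_R\times B_R$; both decompositions yield the same bounds, and your description of the error integral as being ``over $B_r\times B_r$'' should read ``over the near region'' but the conclusion $Cr^{N-sp}$ is unaffected.
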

\begin{proof}
     Since $u$ is a weak supersolution to the problem \eqref{B1}, hence for any $v \in X_0^{s,p}(\Omega)$ such that $v \geq 0$, we have
    \begin{equation}\label{B-w supsln}
        H_{s,p}(u,v)+\int_{\Omega}V|u|^{p-2}u v dx \geq 0.
    \end{equation}
     Consider $0<r<\frac{R}{2}$ and choose $\psi \in C_c^{\infty}(B_{3r/2}(x_0))$ such that $0\leq \psi \leq 1$ and $\psi =1$ in $B_{r}(x_0)$ with $|\nabla \psi| \leq \frac{c}{r}$ for some constant $c>0$. Let $\delta>0$ be any real number. Then taking the test function $v=(u+\delta)^{1-p}\psi^p$ in the equation \eqref{B-w supsln}, we arrived at 
    \begin{align}\label{B-l2}
        I_1+I_2+I_3+I_4 \geq 0,
    \end{align}
    where 
    \begin{align}\label{B-i123}
        I_1 &= \int_{B_{2r}(x_0)}|\nabla u|^{p-2} \nabla u\cdot \nabla v dx,\nonumber \\
        I_2 &= \int_{B_{2r}(x_0)}\int_{B_{2r}(x_0)} \frac{|u(x)-u(y)|^{p-2}}{|x-y|^{N+sp}} (u(x)-u(y))(v(x)-v(y)) dx dy, \,\nonumber \\
        I_3 &= 2 \int_{\mathbb{R}^N \setminus B_{2r}(x_0)}\int_{B_{2r}(x_0)} \frac{|u(x)-u(y)|^{p-2}}{|x-y|^{N+sp}}(u(x)-u(y))v(x) dx dy, \,\nonumber \\
      \text{and }  I_4 &= \int_{B_{2r}(x_0)} V|u|^{p-2}u v dx.
    \end{align}
    
    \iffalse
    Following the proof of Lemma 3.4 in \cite{KK07} and applying the properties of $\psi$, we get a constant $C>0$ such that
    \fi
    
    \noindent We now derive the estimates separately for each integral $I_i$ for $i=1,2,3,4$. To obtain an estimate for the integral $I_1$, we follow similar steps as in the proof of Lemma 3.4 in \cite{KK07} and use the properties of $\psi$. For completeness, we provide detailed steps. Substituting 
    $$\nabla v=p(u+\delta)^{1-p}\psi^{p-1}\nabla \psi -(p-1)\psi^p (u+\delta)^{-p}\nabla u \text{ in } I_1$$ we get
    \begin{align}\label{B-LI1'}
        I_1 &= -(p-1)\int_{B_{2r}(x_0)}\frac{|\nabla u|^p}{(u+\delta)^p}\psi^p dx+p\int_{B_{2r}(x_0)}|\nabla u|^{p-2} \frac{\psi^{p-1}}{(u+\delta)^{p-1}}\nabla u \cdot \nabla \psi dx \nonumber\\
        &= -(p-1)\int_{B_{2r}(x_0)}\frac{|\nabla (u+\delta)|^p}{(u+\delta)^p}\psi^p dx \nonumber\\
        &\hspace{2.5cm}+p\int_{B_{2r}(x_0)}|\nabla (u+\delta)|^{p-2} \frac{\psi^{p-1}}{(u+\delta)^{p-1}}\nabla (u+\delta) \cdot \nabla \psi dx \nonumber\\
        &= -(p-1)\int_{B_{2r}(x_0)}|\nabla \log(u+\delta)|^p\psi^p dx\nonumber\\
        &\hspace{2.5cm}+p\int_{B_{2r}(x_0)}|\nabla \log(u+\delta)|^{p-2} \psi^{p-1}\nabla \log(u+\delta) \cdot \nabla \psi dx.
    \end{align}
Using the Young's inequality in the second integral of the RHS of \eqref{B-LI1'} with the exponents $\frac{p}{p-1}$ and $p$, we have
    \begin{align}\label{B-LI1''}
        \int_{B_{2r}(x_0)}&|\nabla \log(u+\delta)|^{p-2} \psi^{p-1}\nabla \log(u+\delta) \cdot \nabla \psi dx \nonumber\\
        &\leq \epsilon \int_{B_{2r}(x_0)}|\nabla \log(u+\delta)|^p\psi^p dx + C(\epsilon) \int_{B_{2r}(x_0)} |\nabla \psi|^p dx,
    \end{align}
    for any $\epsilon>0$. Now, choose $\epsilon>0$ such that $p\epsilon<p-1$. Note that $|\nabla \psi|\leq \frac{c}{r}$, $\psi\equiv1$ in $B_r(x_0)$ and $\psi \geq 0$. Thus substituting \eqref{B-LI1''} in \eqref{B-LI1'}, we get
    \begin{align}\label{B-LI1}
        I_1 &\leq -C_1 \int_{B_{2r}(x_0)}|\nabla \log(u+\delta)|^p\psi^p dx +C\int_{B_{2r}(x_0)} |\nabla \psi|^p dx \nonumber\\
        &\leq -C_1 \int_{B_{r}(x_0)}|\nabla \log(u+\delta)|^p\psi^p dx + C \int_{B_{2r}(x_0)}\frac{c}{r^p} dx \nonumber\\
        &\leq  -C_1 \int_{B_{r}(x_0)}|\nabla \log(u+\delta)|^p\psi^p dx + \frac{C}{r^p}|B_{2r}(x_0)| \nonumber\\
        &=-C_1 \int_{B_{r}(x_0)}|\nabla \log(u+\delta)|^p\psi^p dx + Cr^{N-p}.
    \end{align}
    
    %\begin{align}\label{B-LI1}
     %   I_1 \leq -C \int\limits_{B_r(x_0)} |\nabla \log(u+\delta)|^p dx +Cr^{N-p}.
    %\end{align}
    
    \iffalse
    Now, following the proof of Lemma 1.3 in \cite{DKP16} and applying the fact that $\psi \equiv 1$ in $B_r(x_0)$, we get a constant $C>0$ such that
    \fi
Next, using the estimate for integrals $I_2$  obtained in proof of the Lemma 1.3 in \cite{DKP16} and applying the fact that $\psi \equiv 1$ in $B_r(x_0)$ and $\psi \geq0$, we get 
    \begin{align}\label{B-LI2'}
        I_2 &\leq C \iint_{B_{2r}(x_0)\times B_{2r}(x_0)} \left[-\frac{\left|\log\left( \frac{u(x)+\delta)}{u(y)+\delta} \right)\right|^p}{|x-y|^{N+sp}} \psi(y)^p  + \frac{|\psi(x)-\psi(y)|^p}{|x-y|^{N+sp}}  \right]dy dx \nonumber \\
        & \leq -C \!\int_{B_{r}(x_0)}\int_{B_{r}(x_0)} \frac{\left|\log\left( \frac{u(x)+\delta)}{u(y)+\delta} \right)\right|^p}{|x-y|^{N+sp}} dy dx +C \!\int_{B_{2r}(x_0)}\int_{B_{2r}(x_0)} \frac{|\psi(x)-\psi(y)|^p}{|x-y|^{N+sp}}  dy dx. 
    \end{align}
    Now, since $|\nabla \psi| \leq \frac{c}{r}$, by the mean value theorm, $|\psi(x)-\psi(y)| \leq \frac{c}{r}|x-y|$ for all $x,y \in \mathbb{R}^N$. Thus, we have
    \begin{align}\label{B-L12''}
        \int_{B_{2r}(x_0)}\int_{B_{2r}(x_0)} \frac{|\psi(x)-\psi(y)|^p}{|x-y|^{N+sp}}  dy dx &\leq \frac{C}{r^p}\int_{B_{2r}(x_0)}\int_{B_{2r}(x_0)} \frac{1}{|x-y|^{N-p(1-s)}} \nonumber\\
        &\leq \frac{C}{r^p}|B_{2r}(x_0)|r^{p(1-s)} \leq C r^{N-sp}.
    \end{align}
    Hence, by the equations \eqref{B-LI2'} and \eqref{B-L12''}, we get
    \begin{equation}\label{B-LI2}
        I_2 \leq  -C \int_{B_{r}(x_0)}\int_{B_{r}(x_0)} \frac{1}{|x-y|^{N+sp}}\left|\log\left( \frac{u(x)+\delta)}{u(y)+\delta} \right)\right|^p dy dx +C r^{N-sp}.
    \end{equation}
    Similarly, using the estimate obtained from the proof of Lemma 1.3 in \cite{DKP16} and applying \eqref{B-L12''}, we get
    \begin{align}\label{B-LI3}
        I_3 &\leq C \int_{B_{2r}(x_0)}\int_{B_{2r}(x_0)} \frac{|\psi(x)-\psi(y)|^p}{|x-y|^{N+sp}}  dy dx + Cr^{N-sp}+ C\delta^{1-p} r^N R^{-sp}\,\operatorname{Tail}(u_-;x_0, R)^{p-1} \nonumber\\
        &\leq C\delta^{1-p} r^N R^{-sp}\,\operatorname{Tail}(u_-;x_0, R)^{p-1}+ Cr^{N-sp}.
    \end{align}
    %\begin{align}\label{B-LI3}
     %   I_3 \leq C\delta^{1-p} r^N R^{-sp} \,\operatorname{Tail}(u_-;x_0, R)^{p-1}+Cr^{N-sp}. 
    %\end{align}
    Note that the constant $C>0$ depending on $p$, may vary line by line. Finally, using the fact that $0 \leq \frac{u}{u+\delta}\leq 1$ in $B_R(x_0)$ and $B_{2r}(x_0)\subset B_R(x_0)$, we apply the H\"older's inequality with the exponents $p_s^*$ and $\frac{p_s^*}{p_s^*-1}=\frac{Np}{Np-N+sp}$, to get 
    \begin{align}\label{B-LI4}
        I_4 = \int_{B_{2r}(x_0)} V u^{p-1} (u+\delta)^{1-p}\psi dx 
         &\leq \|V\|_{L^{p_s^*}(\Omega)} |B_{2r}(x_0)|^{\frac{Np-N+sp}{Np}} \nonumber\\
         &=C \|V\|_{L^{p_s^*}(\Omega)} r^{\frac{Np-N+sp}{p}}.
    \end{align}
    By combining \eqref{B-LI1}, \eqref{B-LI2}, \eqref{B-LI3}, and \eqref{B-LI4} in \eqref{B-l2}, we obtain a constant $C>0$ depending on $p$ such that
    \begin{align*}
          \int_{B_r(x_0)} |\nabla &\log(u+\delta)|^p dx 
+  \int_{B_r(x_0)}\int_{B_r(x_0)}  \frac{1}{|x-y|^{N+sp}}\left|\log\left( \frac{u(x)+\delta)}{u(y)+\delta} \right)\right|^p dy dx \nonumber \\
 &\leq Cr^N(r^{-p}+r^{-sp}+\delta^{1-p} R^{-sp} \,\operatorname{Tail}(u_-;x_0, R)^{p-1}) +r^{\frac{Np-N+sp}{p}}\|V\|_{L^{p_s^*}(\Omega)}.
    \end{align*}
Hence the lemma.
\end{proof}
\noindent As a consequence of Lemma \ref{B-lem1}, we have the generalized strong maximum principle.
\begin{theorem}[\textbf{Strong Maximum Principle}]\label{B-smp}
    Let $u \geq 0$ in $\Omega$ and $u=0$ in $\mathbb{R}^N\setminus\Omega$ be an eigenfunction of \eqref{B} associated with an eigenvalue $\lambda>0$. Then $u>0$ in $\Omega$.
\end{theorem}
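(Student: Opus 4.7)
The plan is to derive the conclusion from the logarithmic estimate of Lemma \ref{B-lem1} via a contradiction argument combined with the connectedness of $\Omega$. The first step is to observe that $u$ is a weak supersolution of the homogeneous problem \eqref{B1}: since $u \geq 0$, $\lambda > 0$, and $g > 0$ a.e.\ in $\Omega$, for every nonnegative $\psi \in X_0^{s,p}(\Omega)$ one has
\[
H_{s,p}(u,\psi) + \int_{\Omega} V|u|^{p-2} u\, \psi\, dx = \lambda \int_{\Omega} g |u|^{p-2} u\, \psi\, dx \geq 0.
\]
Moreover, since $u \geq 0$ in $\Omega$ and $u = 0$ in $\mathbb{R}^N \setminus \Omega$, the negative part $u_-$ vanishes identically on $\mathbb{R}^N$, so $\operatorname{Tail}(u_-;x_0,R) = 0$ for every ball $B_R(x_0) \subset \Omega$. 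Applying Lemma \ref{B-lem1} therefore produces a constant $M > 0$, depending on $N, p, s, r, R$ and the norm of $V$ but \textbf{independent of $\delta > 0$}, such that
\[
\mathcal{J}_\delta := \int_{B_r(x_0)}\!\!\int_{B_r(x_0)} \frac{1}{|x-y|^{N+sp}} \left| \log \frac{u(x)+\delta}{u(y)+\delta} \right|^p dy\, dx \leq M.
\]

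Next, I would argue by contradiction. Suppose $u$ is not positive a.e.\ on $\Omega$. Since $u \not\equiv 0$ and $\Omega$ is open and connected, one can select a ball $B_{2r}(x_0) \subset \Omega$ together with measurable subsets $A = \{u = 0\} \cap B_r(x_0)$ and $B_c = \{u \geq c\} \cap B_r(x_0)$, both of positive Lebesgue measure, for some $c > 0$. For $x \in A$ and $y \in B_c$,
\[
\left| \log \frac{u(x)+\delta}{u(y)+\delta} \right|^p = \left| \log \frac{\delta}{u(y)+\delta} \right|^p \geq \left| \log \frac{\delta}{c + \delta} \right|^p,
\]
and the right-hand side tends to $+\infty$ as $\delta \to 0^+$. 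Restricting $\mathcal{J}_\delta$ to $A \times B_c$ and applying Fatou's lemma then forces $\liminf_{\delta \to 0^+} \mathcal{J}_\delta = +\infty$, contradicting the uniform bound $\mathcal{J}_\delta \leq M$.

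Consequently, inside any sub-ball compactly contained in $\Omega$, either $u$ vanishes a.e.\ or $u$ is strictly positive a.e. A standard clopen argument based on the connectedness of $\Omega$, together with the assumption $u \not\equiv 0$, then upgrades this local dichotomy to the global conclusion $u > 0$ a.e.\ in $\Omega$. The main technical obstacle in the plan is the careful extraction of the ball $B_{2r}(x_0)$ together with the two positive-measure sets $A$ and $B_c$; this is handled by placing $x_0$ on the essential boundary separating the zero set and the positivity set of $u$ and invoking that such an essential boundary must be nonempty whenever both sets carry positive measure. Once this localization is secured, both the Fatou step and the subsequent topological deduction are routine.
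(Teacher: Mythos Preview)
Your proof is correct and follows essentially the same route as the paper: both verify that $u$ is a supersolution of \eqref{B1}, observe that $\operatorname{Tail}(u_-;x_0,R)=0$ so that the bound from Lemma~\ref{B-lem1} is uniform in $\delta$, and then send $\delta\to 0^+$ to reach a contradiction. The only differences are in how the contradiction is extracted. The paper, rather than restricting the double integral to $A\times B_c$ and invoking Fatou, averages over $y\in A\cap B_r(x_0)$ to obtain a pointwise bound on $G_\delta(x)=\log\bigl(1+u(x)/\delta\bigr)$, integrates this over $B_r(x_0)$, and lets $\delta\to 0$ to conclude $u=0$ a.e.\ on $B_r(x_0)$; it then declares a contradiction without spelling out the propagation/connectedness step that you make explicit. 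Your Fatou argument on $A\times B_c$ is slightly more direct, and your acknowledgment that one must first locate a ball meeting both $\{u=0\}$ and $\{u>0\}$ in positive measure (equivalently, run the clopen argument afterwards) fills in a detail the paper leaves implicit.
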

\begin{proof}
    Since $g>0$ a.e. in $\Omega$ and $u\geq 0$ is an eigenfunction corresponding to $\lambda>0$, hence we have 
    \begin{align*}
        H_{s,p}(u,v)+\int_{\Omega}V|u|^{p-2}u v dx &= \lambda\int_{\Omega} g|u|^{p-2}uv dx \geq 0,
    \end{align*}
    for any $v \in X_0^{s,p}(\Omega)$ with $v \geq 0$.
    This shows that the eigenfunction $u\geq 0$ is a weak supersolution to the problem \eqref{B1}.\\
    Suppose the set $A:=\{x \in \Omega\,: u(x)=0\}$ has positive measure. Now choose $x_0 \in \Omega$ and  a real number $R>0$ such that
    $B_R(x_0) \subset \Omega$, and there exists $r\in(0,\frac{R}{2})$ such that $|B_r(x_0) \cap A|>0$. Since $u \geq 0$, we have $$\operatorname{Tail}(u_-;x_0, R)=0.$$
    Thus by Lemma \ref{B-lem1}, for any $\delta>0$, we get
    \begin{align}\label{B-smp1}
         \int_{B_r(x_0)} |\nabla \log(u+\delta)|^p dx 
         +  \iint_{B_r(x_0)\times B_r(x_0)}  \frac{1}{|x-y|^{N+sp}}\left|\log\left( \frac{u(x)+\delta)}{u(y)+\delta} \right)\right|^p dy dx \leq M,
  \end{align}
  where $$M= Cr^N(r^{-p}+r^{-sp}) +r^{\frac{Np-N+sp}{N}}\|V\|_{L^{p_s^*}(\Omega)}.$$ 
  For $\delta>0$, we define 
  $$G_{\delta}(x)=\log(1+\frac{u(x)}{\delta}), \ x\in \Omega.$$
  For any $x\in B_r(x_0)$, $y \in A\cap B_r(x_0)$ we have $|x-y|\leq 2r < R$. Then $G_\delta(y) = 0$. Thus,
  \begin{align}\label{B-smp2}
      |G_{\delta}(x)|^p = |G_{\delta}(x)-G_{\delta}(y)|^p &= \left|\log\left( \frac{u(x)+\delta}{u(y)+\delta}\right)\right|^p \nonumber \\
      & \leq \frac{R^{N+sp}}{|x-y|^{N+sp}}\left|\log\left(\frac{u(x)+\delta}{u(y)+\delta}\right)\right|^p.
  \end{align}
  Therefore, on taking average for all $y \in A \cap B_r(x_0)$, we obtain
  \begin{align}\label{B-smp3}
      |G_{\delta}(x)|^p  &\leq \frac{R^{N+sp}}{|A \cap B_r(x_0)|}\int_{A \cap B_r(x_0)}  \frac{1}{|x-y|^{N+sp}}\left|\log\left( \frac{u(x)+\delta)}{u(y)+\delta} \right)\right|^p dy \nonumber \\
      &\leq \frac{R^{N+sp}}{|A \cap B_r(x_0)|}\int_{B_r(x_0)}  \frac{1}{|x-y|^{N+sp}}\left|\log\left( \frac{u(x)+\delta)}{u(y)+\delta} \right)\right|^p dy.
  \end{align}
  Thus, from \eqref{B-smp1} and \eqref{B-smp3}, we get
  \begin{align}\label{B-smp4}
      \int_{B_r(x_0)}|G_{\delta}(x)|^p dx &\leq \frac{R^{N+sp}}{|A \cap B_r(x_0)|}\iint\limits_{B_r(x_0)\times B_r(x_0)}  \frac{1}{|x-y|^{N+sp}}\left|\log\left( \frac{u(x)+\delta)}{u(y)+\delta} \right)\right|^p dy dx \nonumber \\
      &\leq C_r \left(\int_{B_r(x_0)} |\nabla \log(u+\delta)|^p dx +  \iint\limits_{B_r(x_0)\times B_r(x_0)}  \frac{|\log\left( \frac{u(x)+\delta)}{u(y)+\delta} \right)|^p}{|x-y|^{N+sp}} dy dx \right) \nonumber \\
         &\leq C_rM,
  \end{align}
  where $C_r=\frac{R^{N+sp}}{|A \cap B_r(x_0)|}$. Now for $\delta \rightarrow 0$, $G_\delta(x) \rightarrow \infty$ for all $x \in B_r(x_0)$ with $u(x)>0$. Thus, on taking $\delta \rightarrow 0$ in the equation \eqref{B-smp4}, we get that $u=0$ a.e. in $B_r(x_0)$, which is a contradiction. Therefore $|A|=0$. Hence, we conclude $u>0$ in $\Omega$.
\end{proof}
\section{Existence and qualitative properties of principal eigenvalue}
In this section, we will show the existence of principal eigenvalue and study some of its qualitative properties. We begin with the proof of the existence of a positive eigenvalue to the problem \eqref{B}, which is the least among all other positive eigenvalues. We call it the {\it first eigenvalue} and denote it by $\lambda_1$. Later, we show the {\it first eigenvalue is precisely the principal eigenvalue} to \eqref{B}.
\begin{theorem}\label{B-thm1}
    The Rayleigh quotient $\mathcal{R}$ defined in \eqref{B-eq-rayleigh} is non-zero and attained at a $\phi \in X_0^{s,p}(\Omega) \setminus \{0\}$. Moreover, the first eigenvalue $\lambda_1$ is the Rayleigh quotient $\mathcal{R}$ and $\phi$ is an associated eigenfunction.
\end{theorem}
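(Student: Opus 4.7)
The plan is to apply the direct method of the calculus of variations to the constrained minimization problem defining $\mathcal{R}$, and then recover the eigenvalue equation by a Lagrange multiplier argument. First I would show that $\mathcal{R}$ is strictly positive. The key observation is that the hypothesis $q>N/(sp)$ gives $\tfrac{pq}{q-1}<p_s^{*}<p^{*}$, so by H\"{o}lder's inequality combined with the continuous embedding \eqref{CONT MET} there is a constant $C=C(\Omega,N,p,s,q,\|g\|_{L^q})>0$ with
\[
\int_{\Omega} g\,|u|^{p}\,dx \leq \|g\|_{L^{q}(\Omega)}\,\|u\|^{p}_{L^{pq/(q-1)}(\Omega)} \leq C\,\|u\|^{p}_{X_{0}^{s,p}(\Omega)}, \qquad u\in X_{0}^{s,p}(\Omega).
\]
Since $V\geq 0$ a.e., the numerator in \eqref{B-eq-rayleigh} is bounded below by $\|u\|^{p}_{X_0^{s,p}(\Omega)}$, and so $\mathcal{R}\geq 1/C>0$.

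Next I would establish that $\mathcal{R}$ is attained. Pick a minimizing sequence $(u_{n})\subset X_{0}^{s,p}(\Omega)$ with $\int_{\Omega}g\,|u_{n}|^{p}\,dx=1$ and $\|u_{n}\|^{p}_{X_{0}^{s,p}(\Omega)}+\int_{\Omega}V|u_{n}|^{p}\,dx\to\mathcal{R}$. Because $V\geq 0$, the sequence $(u_{n})$ is bounded in the reflexive space $X_{0}^{s,p}(\Omega)$, so up to a subsequence $u_{n}\rightharpoonup \phi$ weakly in $X_{0}^{s,p}(\Omega)$. The compact embedding \eqref{CPT MET} with exponent $pq/(q-1)<p^{*}$ yields $u_{n}\to \phi$ strongly in $L^{pq/(q-1)}(\Omega)$, and H\"{o}lder's inequality together with $V,g\in L^{q}(\Omega)$ gives
\[
\int_{\Omega}g\,|u_{n}|^{p}\,dx\longrightarrow \int_{\Omega}g\,|\phi|^{p}\,dx=1,\qquad \int_{\Omega}V|u_{n}|^{p}\,dx\longrightarrow \int_{\Omega}V|\phi|^{p}\,dx.
\]
In particular $\phi\not\equiv 0$. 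Weak lower semicontinuity of $u\mapsto \|u\|^{p}_{X_{0}^{s,p}(\Omega)}$ (which is a consequence of the convexity of this norm) then yields $\|\phi\|^{p}_{X_{0}^{s,p}(\Omega)}+\int_{\Omega}V|\phi|^{p}\,dx\leq \mathcal{R}$, and the reverse inequality is immediate from the feasibility of $\phi$.

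To identify $\mathcal{R}$ as an eigenvalue, I would set $T(u):=\|u\|^{p}_{X_{0}^{s,p}(\Omega)}+\int_{\Omega}V|u|^{p}\,dx$ and $G(u):=\int_{\Omega}g\,|u|^{p}\,dx$. A direct computation (justified by the integrability of $V$ and $g$ against the Sobolev power $pq/(q-1)$) shows that $T,G\in C^{1}(X_{0}^{s,p}(\Omega);\mathbb{R})$ with
\[
T'(u)v = p\,H_{s,p}(u,v)+p\int_{\Omega}V|u|^{p-2}uv\,dx,\qquad G'(u)v = p\int_{\Omega}g\,|u|^{p-2}uv\,dx.
\]
Since $G'(\phi)\phi=p>0$, the functional $G'(\phi)\neq 0$ and $\phi$ is a regular point of the constraint manifold $\{G=1\}$. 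The Lagrange multiplier rule then produces $\mu\in\mathbb{R}$ such that $T'(\phi)=\mu G'(\phi)$, which is precisely the weak formulation \eqref{B2} with $\lambda=\mu$. Testing this identity against $v=\phi$ gives $\mathcal{R}=T(\phi)=\mu\,G(\phi)=\mu$, so $\mu=\mathcal{R}$ and $\phi$ is an eigenfunction associated with $\mathcal{R}$.

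Finally, minimality follows easily: if $\lambda>0$ is any eigenvalue of \eqref{B} with eigenfunction $u\in X_{0}^{s,p}(\Omega)\setminus\{0\}$, then testing \eqref{B2} against $u$ gives $T(u)=\lambda G(u)$ with $G(u)>0$ (because $g>0$ a.e.\ and $u\not\equiv 0$), so $\lambda=T(u)/G(u)\geq \mathcal{R}$. Hence the smallest positive eigenvalue $\lambda_{1}$ coincides with $\mathcal{R}$ and $\phi$ is a corresponding eigenfunction. The only delicate point in this plan is the strong convergence of the weighted integrals $\int_{\Omega}g|u_{n}|^{p}\,dx$ and $\int_{\Omega}V|u_{n}|^{p}\,dx$; once the embedding threshold $pq/(q-1)<p^{*}$ is verified from $q>N/(sp)$, everything else reduces to a standard direct method plus a Lagrange multiplier argument.
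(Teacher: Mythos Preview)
Your proposal is correct and follows essentially the same direct-method approach as the paper: take a minimizing sequence for the constrained problem, use $V\ge 0$ to get boundedness in $X_0^{s,p}(\Omega)$, extract a weak limit, pass to the limit in the weighted integrals via compactness of the subcritical embedding, and invoke weak lower semicontinuity of the norm.

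Two small differences are worth noting. First, the paper passes to the limit in $\int_\Omega g|u_n|^p\,dx$ by embedding into $L^{p_s^*}(\Omega)$ and applying dominated convergence, whereas you use the sharper observation $pq/(q-1)<p_s^*<p^*$ and H\"older directly; both are fine. Second, and more substantively, you explicitly carry out the Lagrange multiplier argument showing that the constrained minimizer $\phi$ actually satisfies the weak formulation \eqref{B2} with $\lambda=\mathcal{R}$, and hence is an eigenfunction. The paper's proof asserts this at the end (``$\phi$ is an associated eigenfunction'') without writing out the Euler--Lagrange step, so your version is in fact more complete on this point.
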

\begin{proof}
    Consider a sequence $(u_n)$ in $X_0^{s,p}(\Omega) \setminus \{0\}$ such that $\displaystyle{\int_{\Omega} g(x)| u_n |^{p} dx =1}$ and
    \begin{equation}\label{B1'}
         \int_{\Omega}| \nabla u_n |^{p}dx+ \int_{\mathbb{R}^N}\int_{\mathbb{R}^N}\frac{| u_n(x)-u_n(y)|^{p}}{| x-y |^{N+ps}} dxdy + \int_{\Omega}V(x)| u_n |^{p} dx \rightarrow \mathcal{R} 
    \end{equation}
as $n \rightarrow \infty$. 
    Since 
    $$\|u_n\|_{X_0^{s,p}(\Omega)}^p \leq H_{s,p}(u_n,u_n) + \int_{\Omega}V(x)| u_n |^{p} dx $$
    for all $n \in \mathbb{N}$, the sequence $(u_n)$ is a bounded in $X_0^{s,p}(\Omega)$. Since $X_0^{s,p}(\Omega)$ is a reflexive Banach space, hence up to a subsequence, $u_n\rightharpoonup \phi$ weakly in $X_0^{s,p}(\Omega)$. Further using the compact embedding \eqref{CPT MET}, up to a subsequence, $u_n \rightarrow \phi$ in $L^q(\Omega)$ for all $1 \leq q  <p^*$ and $u_n \rightarrow \phi$ a.e. in $\Omega$. Since $p_s^* <p^*$, by \cite[Theorem 4.9]{B11}, it follows that there exists $h \in L^{p_s^*}(\Omega)$ such that $|u_n| \leq h$ a.e. in $\Omega$.
   As $g \in L^{\frac{N}{ps}}(\Omega)$, using the H\"older's inequality we have
    \begin{equation}\label{B3}
        \left| \int_{\Omega}g(x)| u_n |^{p} dx - \int_{\Omega}g(x)| \phi |^{p} dx\right| \leq \|g\|_{L^{\frac{N}{ps}}(\Omega)} \left(\int_{\Omega}| | u_n |^{p}-| \phi |^{p} |^{\frac{N}{N-ps}} dx \right)^{\frac{N-ps}{N}}.
    \end{equation}
    Further, $|| u_n |^{p}-| \phi |^{p} |^{\frac{N}{N-ps}} \leq 2^{\frac{N}{N-ps}}h^{p_s^*}$. Thus, applying the dominated convergence theorem to the equation \eqref{B3}, we get
    $$\left| \int_{\Omega}g(x)| u_n |^{p} dx - \int_{\Omega}g(x)| \phi |^{p} dx\right| \rightarrow 0 \text{ as } n \rightarrow \infty.$$
    Hence $\displaystyle{\int_{\Omega}g(x)| \phi |^{p} dx =1}$. This implies that $\phi \neq 0$ is an element of $X_0^{s,p}(\Omega)$. Similarly, proceeding as before, one can obtain 
    \begin{equation}\label{B4}
        \int_{\Omega}V(x)| u_n |^{p} dx \rightarrow  \int_{\Omega}V(x)| \phi |^{p} dx \text{ as } n \rightarrow \infty.
    \end{equation}
    On the other hand, by the property of weak convergence, we have
    \begin{equation}\label{B5}
        \|\phi\|_{X_0^{s,p}(\Omega)}=H_{s,p}(\phi,\phi) \leq \liminf\limits_{n \rightarrow \infty} \|u_n\|_{X_0^{s,p}(\Omega)}=\liminf\limits_{n \rightarrow \infty} H_{s,p}(u_n,u_n).
    \end{equation}
    Thus combining \eqref{B1'}, \eqref{B4} and \eqref{B5}, we get
    \begin{equation}\label{B6}
        \int_{\Omega}| \nabla \phi|^{p}dx+ \int_{\mathbb{R}^N}\int_{\mathbb{R}^N}\frac{| \phi(x)-\phi(y)|^{p}}{| x-y |^{N+ps}} dxdy + \int_{\Omega}V(x)| \phi |^{p} dx \leq \mathcal{R}.
    \end{equation}
    Since $\displaystyle{\int_{\Omega}g(x)| \phi |^{p} dx=1}$, hence the definition of Rayleigh quotient $\mathcal{R}$ implies that 
    \begin{equation}\label{B7}
        \int_{\Omega}| \nabla \phi|^{p}dx+ \int_{\mathbb{R}^N}\int_{\mathbb{R}^N}\frac{| \phi(x)-\phi(y)|^{p}}{| x-y |^{N+ps}} dxdy + \int_{\Omega}V(x)| \phi |^{p} dx = \mathcal{R}.
    \end{equation}
    This shows that the {\it Rayleigh quotient} $\mathcal{R}$ is attained at $\phi \in X_0^{s,p}(\Omega)\setminus\{0\}$ and $\mathcal{R} \neq 0$. Now consider any eigenvalue $\lambda$ to the problem \eqref{B} and an associated eigenfunction $v$. Taking $v$ as the test function in the weak formulation \eqref{B2}, we get
    \begin{align*}
        \|v\|_{X_0^{s,p}(\Omega)}^p +\int_\Omega V|v|^p dx =\lambda \int_{\Omega}g(x)| v|^{p} dx \nonumber \\
        \implies \lambda= \frac{\|v\|_{X_0^{s,p}(\Omega)}^p+ \int_\Omega V|v|^p dx}{\int_{\Omega}g(x)| v|^{p} dx} \geq \mathcal{R}.
    \end{align*}
    Thus, by the definition of $\mathcal{R}$, we obtain that the first eigenvalue $\lambda_1=\mathcal{R}$ and $\phi$ is an associated eigenfunction.
\end{proof}

The next theorem asserts that the eigenfunctions corresponding to the eigenvalue given by the {\it Rayleigh quotien}t do not change their sign, i.e., they cannot have zeros in the domain. Thus, it follows that the eigenvalue given by the {\it Rayleigh quotient} is a {\it principal eigenvalue}.
\begin{theorem}\label{B-thm2}
    Let $u \in X_0^{s,p}(\Omega)$ be an eigenfunction associated with the eigenvalue $\lambda_1$. Then either $u>0$ or $u<0$ in $\Omega$.
\end{theorem}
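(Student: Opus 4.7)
The plan is to first show that $|u|$ is also a minimizer of the Rayleigh quotient $\mathcal{R}$ defined in \eqref{B-eq-rayleigh}, then use the equality case to deduce that $u$ cannot change sign in $\Omega$, and finally invoke the strong maximum principle (Theorem~\ref{B-smp}) to obtain strict positivity (or negativity) of the given eigenfunction.

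For the first step, I observe that $\int_\Omega V|u|^p\,dx$ and $\int_\Omega g|u|^p\,dx$ are invariant under $u \mapsto |u|$, while the local gradient term satisfies $\bigl|\nabla|u|\bigr| = |\nabla u|$ a.e., and the Gagliardo seminorm satisfies $[\,|u|\,]_{W^{s,p}(\mathbb{R}^N)} \le [u]_{W^{s,p}(\mathbb{R}^N)}$ via the elementary inequality $\bigl|\,|a|-|b|\,\bigr| \le |a-b|$. Thus the Rayleigh quotient at $|u|$ is $\le \lambda_1$, and minimality forces equality. Hence $|u| \in X_0^{s,p}(\Omega) \setminus \{0\}$ is itself a nonnegative eigenfunction for $\lambda_1$; moreover, since the local piece $\|\nabla u\|_{L^p(\Omega)}^p$ and the potential piece $\int_\Omega V|u|^p\,dx$ are unaffected by $u \mapsto |u|$, equality of the full numerators forces the \emph{equality of Gagliardo seminorms} $[\,|u|\,]_{W^{s,p}(\mathbb{R}^N)} = [u]_{W^{s,p}(\mathbb{R}^N)}$.

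Next I upgrade this seminorm identity to the a.e.\ pointwise equality $\bigl|\,|u(x)|-|u(y)|\,\bigr| = |u(x)-u(y)|$ on $\mathbb{R}^N \times \mathbb{R}^N$. Were both $\{u>0\}$ and $\{u<0\}$ to have positive Lebesgue measure in $\Omega$, their Cartesian product would have positive measure in $\mathbb{R}^N \times \mathbb{R}^N$; on that product one has the strict inequality $\bigl|\,|u(x)|-|u(y)|\,\bigr| < |u(x)-u(y)|$ (since $u(x)$ and $u(y)$ have opposite nonzero signs), contradicting the pointwise equality. Thus $u \ge 0$ or $u \le 0$ a.e.\ in $\Omega$. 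After replacing $u$ by $-u$ if necessary, I may assume $u \ge 0$ in $\Omega$; since $u$ is a nontrivial eigenfunction for $\lambda_1 > 0$ vanishing outside $\Omega$, Theorem~\ref{B-smp} yields $u > 0$ in $\Omega$.

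The main obstacle is the equality case in the second step: translating the equality of Gagliardo seminorms into a genuine a.e.\ pointwise identity and then excluding simultaneous positive measure of $\{u>0\}$ and $\{u<0\}$ by a Fubini-type argument. Should this route prove inconvenient, an alternative is to test \eqref{B2} against $v = u_+$ and split the nonlocal double integral according to the signs of $u(x)$ and $u(y)$; this produces a nonnegative cross term coupling $u_+$ and $u_-$ which is strictly positive whenever both parts are nontrivial, again contradicting the variational characterization of $\lambda_1$.
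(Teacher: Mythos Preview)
Your proposal is correct and follows essentially the same argument as the paper's own proof. Both rely on the elementary inequality $\bigl|\,|a|-|b|\,\bigr|\le |a-b|$ (strict when $ab<0$) to show that if $u$ changed sign then $|u|$ would have a strictly smaller Gagliardo seminorm---while the local gradient term and the $V$- and $g$-integrals are unchanged---contradicting the minimality of $\lambda_1=\mathcal{R}$; the conclusion $u>0$ (or $u<0$) then comes from the strong maximum principle (Theorem~\ref{B-smp}). The only cosmetic difference is ordering: the paper argues the contradiction directly, whereas you first establish equality of the Gagliardo seminorms and then upgrade it to a pointwise identity before contradicting; the content is the same.
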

\begin{proof}
    Without loss of generality, assume $\int_{\Omega}g(x)| u |^{p} dx=1$. By, the Theorem \ref{B-thm1}, we have $$\int_{\Omega}| \nabla u|^{p}dx+ \int_{\mathbb{R}^N}\int_{\mathbb{R}^N}\frac{| u(x)-u(y)|^{p}}{| x-y |^{N+ps}} dxdy + \int_{\Omega}V(x)| u |^{p} dx = \mathcal{R}.$$ 
    Let us assume that $u$ changes sign in $\Omega$. Then there exists subsets $\omega_1,\omega_2$ of $\Omega$, both of positive measure such that $u(x)u(y)<0$ for $x \in \omega_1,\ y \in \omega_2$. Note that for any $a,b \in \mathbb{R}$, we have
    $$||a|-|b|| \leq |a-b|, $$  and strict inequality holds when $ab<0$. So using the above inequality, we obtain
    \begin{equation*}
        \int_{\omega_1}\int_{\omega_2}\frac{| |u(x)|-|u(y)||^{p}}{| x-y |^{N+ps}} dxdy < \int_{\omega_1}\int_{\omega_2}\frac{| u(x)-u(y)|^{p}}{| x-y |^{N+ps}} dxdy.
    \end{equation*}
    By the property of Sobolev spaces (see (\cite[Lemma 7.6]{GT98}), we also have $|u| \in X_0^{s,p}(\Omega)$ and $\nabla | u|=| \nabla u| $ a.e. in $\Omega$. Thus
    \begin{align*}
         \int_{\Omega}| \nabla |u||^{p}  dx&+ \int_{\mathbb{R}^N}\int_{\mathbb{R}^N}\frac{| | u(x)| -| u(y)| |^{p}}{| x-y |^{N+ps}} dxdy +\int_{\Omega} V(x)|u|^p dx \\
        &<\int_{\Omega}| \nabla u |^{p}  dx+ \int_{\mathbb{R}^N}\int_{\mathbb{R}^N}\frac{| u(x)-u(y)|^{p}}{| x-y |^{N+ps}} dxdy +\int_{\Omega} V(x)|u|^p dx  =\lambda_1 =\mathcal{R},
    \end{align*}
    which contradicts to the definition of Rayleigh quotient $\mathcal{R}$. Therefore, $u$ does not change the sign in $\Omega$. This implies that either $u\geq 0$ or $u\leq 0$ in $\Omega$. Now if $u \geq 0$ in $\Omega$, by the Theorem \ref{B-smp}, we have $u>0$ in $\Omega$. Similarly, the case $u<0$ can be seen by replacing the above arguments for $u$ with $(-u)$.
\end{proof}
The following theorem says that $\lambda_1$ is the only eigenvalue possessing a nonnegative eigenfunction associated with it, i.e., $\lambda_1$ is the only {\it principal eigenvalue}. In other words, non-principal eigenfunctions change signs in their domain.
\begin{theorem}\label{B-thm3}
    Let $v$ be an eigenfunction associated with an eigenvalue $\lambda>\lambda_1$. Then $v$ changes sign in $\Omega$.
\end{theorem}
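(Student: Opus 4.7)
The strategy is to argue by contradiction using Picone-type inequalities adapted to the mixed operator $\mathcal{L}_{s,p}$. Suppose to the contrary that $v$ does not change sign in $\Omega$. Without loss of generality we may assume $v \geq 0$ in $\Omega$ (otherwise replace $v$ by $-v$, which is also an eigenfunction associated with $\lambda$). Since $v \not\equiv 0$ is an eigenfunction associated with $\lambda > 0$, the strong maximum principle (Theorem \ref{B-smp}) yields $v > 0$ in $\Omega$. Let $\phi_1 \in X_0^{s,p}(\Omega)$ denote a positive eigenfunction associated with $\lambda_1$, whose existence is guaranteed by Theorems \ref{B-thm1} and \ref{B-thm2}.

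The plan is to test the weak formulation \eqref{B2} for $v$ against $\psi = \phi_1^{p}/v^{p-1}$ and exploit Picone-type inequalities. For the local part, the classical Picone inequality says that for $\phi_1 \geq 0$ and $v > 0$,
\begin{equation*}
    |\nabla \phi_1|^{p} \;\geq\; |\nabla v|^{p-2}\,\nabla v \cdot \nabla\!\left(\frac{\phi_1^{p}}{v^{p-1}}\right) \quad \text{a.e.\ in } \Omega.
\end{equation*}
For the nonlocal part, the discrete (fractional) Picone inequality gives pointwise for $x,y \in \mathbb{R}^N$,
\begin{equation*}
    |\phi_1(x)-\phi_1(y)|^{p} \;\geq\; |v(x)-v(y)|^{p-2}\bigl(v(x)-v(y)\bigr)\!\left(\frac{\phi_1(x)^{p}}{v(x)^{p-1}} - \frac{\phi_1(y)^{p}}{v(y)^{p-1}}\right).
\end{equation*}
Integrating the first over $\Omega$ and the second against the kernel $|x-y|^{-(N+sp)}$ over $\mathbb{R}^N \times \mathbb{R}^N$ (noting that $\phi_1 = v = 0$ in $\mathbb{R}^N \setminus \Omega$, so the inequality becomes trivial/equality in that region), and summing, one obtains
$H_{s,p}(\phi_1,\phi_1) \geq H_{s,p}(v,\phi_1^{p}/v^{p-1})$.

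Testing \eqref{B2} for $v$ with $\psi = \phi_1^{p}/v^{p-1}$, the right-hand side collapses to $\lambda \int_\Omega g \phi_1^{p}\,dx - \int_\Omega V\phi_1^{p}\,dx$, and combining with the Picone bound above yields
\begin{equation*}
    H_{s,p}(\phi_1,\phi_1) + \int_\Omega V\phi_1^{p}\,dx \;\geq\; \lambda \int_\Omega g\phi_1^{p}\,dx.
\end{equation*}
On the other hand, inserting $\phi_1$ itself as a test function in its own eigenvalue equation gives the identity $H_{s,p}(\phi_1,\phi_1) + \int_\Omega V\phi_1^{p}\,dx = \lambda_1 \int_\Omega g\phi_1^{p}\,dx$. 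Since $g > 0$ a.e.\ and $\phi_1 > 0$ in $\Omega$, the two relations force $\lambda_1 \geq \lambda$, contradicting the hypothesis $\lambda > \lambda_1$.

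The chief technical obstacle will be justifying the admissibility of $\psi = \phi_1^{p}/v^{p-1}$ in $X_0^{s,p}(\Omega)$: since $v$ vanishes on $\partial \Omega$, the quotient may blow up near the boundary. The standard remedy is to work first with the regularized test function $\psi_{\varepsilon} := \phi_1^{p}/(v+\varepsilon)^{p-1}$ for $\varepsilon > 0$, where admissibility is clear since $v + \varepsilon \geq \varepsilon$ in $\Omega$ and $\phi_1 \in X_0^{s,p}(\Omega)$, and then to pass to the limit $\varepsilon \to 0^{+}$ by monotone/dominated convergence. Controlling the nonlocal double integral in this limit is the most delicate step; it requires combining the positivity of $v$ on compact subsets of $\Omega$, the vanishing of both $\phi_1$ and $v$ outside $\Omega$, and the integrability provided by $V, g \in L^{q}(\Omega)$ with $q > N/(sp)$ so that all terms involving the potential and the weight remain finite.
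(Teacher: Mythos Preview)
Your Picone-inequality strategy is correct and well known for both the local and nonlocal parts, but it differs from the paper's proof. The paper argues via hidden convexity: given a positive principal eigenfunction $u$ and the hypothetical positive eigenfunction $v$ for $\lambda$, it forms the interpolant $q_t=(tu^p+(1-t)v^p)^{1/p}$, uses convexity of $t\mapsto t^p$ to bound $H_{s,p}(q_t,q_t)+\int_\Omega V|q_t|^p$ from above by the convex combination of the corresponding quantities for $u$ and $v$, and then from below by $pH_{s,p}(v,q_t-v)+p\int_\Omega V|v|^{p-2}v(q_t-v)$. Testing the equation for $v$ against $q_t-v$ and sending $t\to 0^+$ via dominated convergence yields $\lambda_1\ge\lambda$.

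The main practical difference is where the technical burden sits. In your approach the delicate point is exactly the one you flag: justifying $\psi_\varepsilon=\phi_1^p/(v+\varepsilon)^{p-1}\in X_0^{s,p}(\Omega)$ and passing $\varepsilon\to 0^+$ in the nonlocal double integral, which needs a uniform integrability/Fatou argument. The paper's $q_t$ is automatically in $X_0^{s,p}(\Omega)$ (it lies between $\min(u,v)$ and $\max(u,v)$ and inherits their vanishing outside $\Omega$), so no test-function regularization is needed; the only limit is the elementary one-parameter limit $t\to 0^+$, handled by a pointwise bound $|v-q_t|\le t|v-u|$. Your route is conceptually cleaner once the Picone machinery is in place and generalizes readily; the paper's route is more self-contained and sidesteps boundary degeneracy of $v$ altogether.
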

\begin{proof}
    Suppose $v$ does not change sign in $\Omega$. Then without loss of generality, let us assume $v>0$ in $\Omega$ such that $\displaystyle{\int_{\Omega} g(x)|v|^p dx=1}$. Consider an eigenfunction $u>0$ associated with principal eigenvalue $\lambda_1$ such that $\displaystyle{\int_{\Omega} g(x)|u|^p dx=1}$. For $t \in (0,1)$, define $q_t=(tu^p+(1-t)v^p)^{\frac{1}{p}}.$  Obviously, 
    \begin{align}
        \int_{\Omega}g|q_t|^p dx &= t\int_{\Omega}g|u|^p dx+(1-t)\int_{\Omega}g|v|^p dx =1, \label{B8}\\
        \text{and }\int_{\Omega}V|q_t|^p dx &= t\int_{\Omega}V|u|^p dx+(1-t)\int_{\Omega}V|v|^p dx. \label{B8'}
    \end{align}
    Applying the convexity of the map $t \mapsto t^p$ for $p>1$, it follows that
    \begin{align}\label{B10}
        | \nabla q_t | ^p &= \left|(tu^p+(1-t)v^p)^{\frac{1}{p}-1}(tu^{p-1}\nabla u+(1-t)v^{p-1}\nabla v)\right|^p \nonumber \\
        &= q_t^p \left| t \frac{u^p \nabla u}{q_t^p u} +(1-t)\frac{v^p \nabla v}{q_t^p v} \right|^p \nonumber \\
        &= q_t^p |\left| w \frac{\nabla u}{u} +(1-w) \frac{\nabla v}{v}\right|^p,\ \text{where } w=\frac{t u^p}{t u^p+(1-t)v^p} \nonumber \\
        & \leq q_t^p \left(  w \left|\frac{\nabla u}{u}\right|^p +(1-w) \left| \frac{\nabla v}{v}\right|^p\right) 
        =t|\nabla u|^p +(1-t) | \nabla v|^p.
    \end{align}
    Thus we get
    \begin{equation}\label{B11}
        \int_{\Omega}| \nabla q_t | ^p dx \leq t \int_{\Omega}| \nabla u | ^p dx + (1-t) \int_{\Omega}| \nabla v | ^p dx.
    \end{equation}
  %  Also, \begin{equation}\label{B-14-2}
   %     \int\limits_{\Omega}V|q_t|^p dx-\int\limits_{\Omega}V|v|^p dx \geq p \int\limits_{\Omega}V|v|^{p-2}v(q_t-v) dx.
  %  \end{equation}
    In addition, by using \cite[Lemma 4.1]{FP14} we have 
    \begin{align}\label{B12}
         &\int_{\mathbb{R}^N}\int_{\mathbb{R}^N}\frac{| q_t(x)-q_t(y)|^{p}}{| x-y |^{N+ps}} dxdy  \nonumber \\
         &\leq  t  \int_{\mathbb{R}^N}\int_{\mathbb{R}^N}\frac{| u(x)-u(y)|^{p}}{| x-y |^{N+ps}} dxdy +(1-t)  \int_{\mathbb{R}^N}\int_{\mathbb{R}^N}\frac{| v(x)-v(y)|^{p}}{| x-y |^{N+ps}} dxdy.
    \end{align}
    \iffalse
     Also,
    \begin{equation}\label{B9}
        \int_{\Omega}V|q_t|^p dx =t\int_{\Omega}V|u|^p dx+(1-t)\int\limits_{\Omega}V|v|^p dx.
    \end{equation}
    \fi
    Hence using the inequalities \eqref{B8'}, \eqref{B11} and \eqref{B12}, we get
    \begin{align}\label{B13}
        H_{s,p}(q_t,q_t) +\int_{\Omega}V|q_t|^p dx \leq tH_{s,p}(u,u) & +t\int_{\Omega}V|u|^p dx +(1-t)H_{s,p}(v,v) \nonumber \\
        &+\int_{\Omega}(1-t)V|v|^p dx.
    \end{align}
Again, from this inequality, we arrive at
    \begin{align}\label{B14}
        & H_{s,p}(q_t,q_t) +\int_{\Omega}V|q_t|^p dx - \left( H_{s,p}(v,v)+\int_{\Omega}V|v|^p dx \right) \nonumber \\
        &\leq t\left( H_{s,p}(u,u)+\int_{\Omega}V|u|^p dx \right)-t\left(H_{s,p}(v,v)+\int_{\Omega}V|v|^p dx\right) \nonumber \\
        &= t(\lambda_1-\lambda).
    \end{align}
By the convexity of the map $t \mapsto t^p$, we have
    \begin{align}
       \int_{\Omega}V|q_t|^p dx-\int_{\Omega}V|v|^p dx &\geq p \int_{\Omega}V|v|^{p-2}v(q_t-v) dx,\label{B9} \\
        \int_\Omega |\nabla q_t|^p dx-\int_\Omega |\nabla v|^p dx  &\geq p \int_\Omega |\nabla v|^{p-2}\nabla v \cdot \nabla (q_t-v) dx,\label{B-14'}
        \end{align}
        and 
        \begin{align}
        &\int_{\mathbb{R}^N}\int_{\mathbb{R}^N}\frac{| q_t(x)-q_t(y)|^{p}}{| x-y |^{N+ps}} dxdy - \int_{\mathbb{R}^N}\int_{\mathbb{R}^N}\frac{| v(x)-v(y)|^{p}}{| x-y |^{N+ps}} dxdy \nonumber\\ 
        & ~~~~\geq p \int_{\mathbb{R}^N}\int_{\mathbb{R}^N}\frac{| v(x)-v(y)|^{p-2}}{| x-y |^{N+ps}}(v(x)-v(y))((q_t-v)(x)-(q_t-v)(y)) dxdy.\label{B-14''}
    \end{align}
    %Next, following the same arguments as in the proof of the equation (4.4) in Theorem 4.1 in \cite{LG24}, we have
    Thus, from the equations \eqref{B-14'} and \eqref{B-14''}, we have
    \begin{equation}\label{B-14-1}
        H_{s,p}(q_t,q_t)-H_{s,p}(v,v) \geq p H_{s,p}(v,q_t-v).
    \end{equation}
    \iffalse
    Similarly, using the convexity of the map $t \mapsto t^p$, we get
    \begin{equation}
        \int\limits_{\Omega}V|q_t|^p dx-\int_{\Omega}V|v|^p dx \geq p \int_{\Omega}V|v|^{p-2}v(q_t-v) dx.
    \end{equation}
    \fi
    Combining the inequalities \eqref{B9} and \eqref{B-14-1} in \eqref{B14} and using the fact that $v$ is an eigenfunction associated to the eigenvalue $\lambda$, we obtain
    \begin{align}\label{B-14-3}
        0> t(\lambda_1-\lambda) & \geq p \left( H_{s,p}(v,q_t-v)+\int_{\Omega}V|v|^{p-2}v(q_t-v) dx \right) \nonumber \\
        &= p \lambda \int_{\Omega}g|v|^{p-2}v(q_t-v) dx.
    \end{align}
    This implies that 
    \begin{equation}\label{B-14-4}
        \frac{p \lambda}{t}\int_{\Omega}g|v|^{p-2}v(q_t-v) dx \leq \lambda_1-\lambda <0
    \end{equation}
    for every $t\in (0,1)$. Since $g, v > 0$, we obtain that $q_t-v \leq 0$ a.e. in $\Omega$ from the equation \eqref{B-14-4}. Since the map $t \mapsto t^{p}$ is convex, we get
    \begin{align}\label{B-14-4'}
        v-q_t = v-(tu^p+(1-t)v^p)^{\frac{1}{p}} \leq v-(tu+(1-t)v) =t(v-u).
    \end{align}
    Thus for all $t \in (0,1)$, $|g v^{p-1}\left(\frac{q_t-v}{t}\right)|\leq gv^{p-1}(v-u)$ which is an integrable function. Also,
    \begin{align}\label{B-14-4''}
        \lim\limits_{t \rightarrow 0}g v^{p-1}\left(\frac{q_t-v}{t}\right)&=g v^{p-1}\lim\limits_{t \rightarrow 0}\left(\frac{q_t-q_0}{t}\right)\nonumber\\
        &=\frac{1}{p}[g v^{p-1} v^{1-p}(u^p-v^p)] =\frac{1}{p}(u^p-v^p)g.
    \end{align}
    pointwise in $\Omega.$ By the dominated convergence theorem, $g v^{p-1}\left(\frac{q_t-v}{t}\right) \rightarrow \frac{1}{p}(u^p-v^p)g$ in $L^1(\Omega)$. Thus, applying the limit as $t \rightarrow0$ in the equation \eqref{B-14-4}, we get
   % Therefore, by applying the Fatou's lemma to \eqref{B-14-5}, as $t \rightarrow 0^+$, we have 
    \begin{equation}\label{B-14-5}
        p\lambda \int_{\Omega}  \frac{1}{p} (u^p-v^p)gdx \leq \lambda_1-\lambda \,,\,\,
    \end{equation}
    i.e.,
    \begin{equation}\label{B-14-6}
        0= \lambda \left( \int_{\Omega} g|u|^p dx- \int_{\Omega} g|v|^p dx \right) \leq \lambda_1-\lambda.
    \end{equation}
    This contradicts our assumption that $\lambda>\lambda_1$. Hence the proof is complete.
\end{proof}
As a consequence of the previous theorem, we have the following result: which gives an upper bound of the measure of subset either $\{x\in\Omega: u(x)>0\}$ or $\{x\in\Omega: u(x)<0\}$ for non-principal eigenfunctions $u$.
\begin{cor}\label{B-lem2}
	Let $u$ be an eigenfunction of \eqref{B} corresponding to $\nu(\Omega)\neq\lambda_1(\Omega)$. Then we have $\nu(\Omega)>\lambda_1(\Omega_{+})$ and $\nu(\Omega)>\lambda_1(\Omega_{-})$, where $\Omega_{+}=\{x\in\Omega: u(x)>0\}$ and $\Omega_{-}=\{x\in\Omega: u(x)<0\}$. In particular,	
	\begin{equation}\label{ll-ineq}
	\nu(\Omega) \geq C(N, p, s)\left|\Omega_{+}\right|^{-\frac{p-ps}{N-ps}} \text { and }\, \nu(\Omega) \geq C(N, p,s) \left|\Omega_{-}\right|^{-\frac{p-ps}{N-ps}}.
	\end{equation}
\end{cor}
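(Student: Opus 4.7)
Since Theorem \ref{B-thm1} identifies $\lambda_1(\Omega)$ with the infimum of the Rayleigh quotient, and hence as the smallest positive eigenvalue of \eqref{B}, the hypothesis $\nu(\Omega)\neq\lambda_1(\Omega)$ forces $\nu(\Omega)>\lambda_1(\Omega)$; Theorem \ref{B-thm3} then ensures that $u$ changes sign, so $|\Omega_\pm|>0$ and $u_\pm$ are nontrivial elements of $X_0^{s,p}(\Omega)$ supported in $\overline{\Omega_\pm}$. The plan is to test the weak formulation \eqref{B2} for $u$ against $v=u_+$: the local, potential, and weight contributions immediately reduce to $\int_{\Omega_+}|\nabla u_+|^p\,dx$, $\int_{\Omega_+}Vu_+^p\,dx$, and $\int_{\Omega_+}gu_+^p\,dx$. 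For the Gagliardo integral I will invoke the pointwise inequality
\begin{equation*}
|a-b|^{p-2}(a-b)(a_+-b_+)\,\geq\,|a_+-b_+|^p\qquad\text{for all }a,b\in\mathbb{R},
\end{equation*}
verified by a short case analysis on the signs of $a,b$ and strict whenever $ab<0$. Since $(\Omega_+\times\Omega_-)\cup(\Omega_-\times\Omega_+)$ has positive $2N$-dimensional Lebesgue measure, integrating against the kernel $|x-y|^{-(N+sp)}$ produces a strict inequality, which combined with the remaining identities gives
\begin{equation*}
\|u_+\|_{X_0^{s,p}(\Omega_+)}^p+\int_{\Omega_+}Vu_+^p\,dx\,<\,\nu(\Omega)\int_{\Omega_+}gu_+^p\,dx.
\end{equation*}
Applying the Rayleigh characterization of $\lambda_1(\Omega_+)$ from Theorem \ref{B-thm1} (now on $\Omega_+$) forces $\lambda_1(\Omega_+)<\nu(\Omega)$; the bound $\lambda_1(\Omega_-)<\nu(\Omega)$ follows identically with $u_-$ replacing $u_+$.

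For the Faber--Krahn estimates \eqref{ll-ineq}, I will use $V\geq 0$ to drop the potential from the Rayleigh quotient, obtaining
\begin{equation*}
\lambda_1(\Omega_+)\,\geq\,\inf_{\phi\in X_0^{s,p}(\Omega_+)\setminus\{0\}}\frac{\|\phi\|_{X_0^{s,p}(\Omega_+)}^p}{\int_{\Omega_+}g\phi^p\,dx}.
\end{equation*}
H\"older's inequality with the critical exponent $N/(sp)$ applied to $\int g\phi^p$, the fractional Sobolev embedding $\|\phi\|_{L^{p_s^*}(\mathbb{R}^N)}\leq S_s[\phi]_{W^{s,p}}$ of Theorem \ref{FSE}, and a further H\"older interpolation against the local embedding $X_0^{s,p}(\Omega_+)\hookrightarrow L^{p^*}(\mathbb{R}^N)$ afforded by \eqref{MSI}, combine to give a constant $C=C(N,p,s)>0$ for which $\int_{\Omega_+}g\phi^p\,dx\leq C\,|\Omega_+|^{(p-ps)/(N-ps)}\,\|\phi\|_{X_0^{s,p}(\Omega_+)}^p$. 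Inverting this yields the Faber--Krahn lower bound on $\lambda_1(\Omega_+)$; the argument for $\lambda_1(\Omega_-)$ is symmetric, and both combine with $\nu(\Omega)>\lambda_1(\Omega_\pm)$ to conclude \eqref{ll-ineq}.

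The main obstacle will be ensuring the nonlocal inequality is \emph{strict} rather than merely $\geq$: one must check that the pointwise algebraic inequality above becomes strict when $u(x)u(y)<0$ (via a short case analysis on the four sign combinations) and that $\{(x,y):u(x)u(y)<0\}$ has positive measure, for which the sign change guaranteed by Theorem \ref{B-thm3} is indispensable. The remaining pieces---the identities for the local, potential, and weight contributions on the support of $u_\pm$, and the Faber--Krahn bookkeeping with H\"older and Sobolev---are routine once this strict inequality is secured.
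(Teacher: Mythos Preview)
Your approach is essentially the paper's: test the weak formulation against $u_+$, exploit a pointwise inequality for the nonlocal kernel to dominate the Gagliardo term, and then run H\"older plus Sobolev for the measure bound. The only notable difference is that the paper replaces your qualitative strictness argument by the sharper pointwise inequality yielding an explicit positive remainder $2^{p/2}\iint \frac{(u_+(y)u_-(x))^{p/2}}{|x-y|^{N+ps}}\,dx\,dy$, which makes $\nu(\Omega)>\lambda_1(\Omega_+)$ immediate without a separate case analysis; and for \eqref{ll-ineq} the paper applies the H\"older/Sobolev chain directly to $u_+$ via the weak formulation rather than routing through a universal lower bound on $\lambda_1(\Omega_+)$. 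Both routes use the same ingredients and arrive at the same place.
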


\begin{proof}
	Since $\nu(\Omega)\neq\lambda_1(\Omega)$, then by Theorem \ref{B-thm3}, $u$ must change sign in $\Omega$. Choosing $v=u_{+}$ in the weak formulation \eqref{B2}, we obtain
	\begin{align}\label{B-l2-1}
		\nu(\Omega) \int_{\Omega_{+}}g\left|u_{+}\right|^{p} dx &= H_{s,p}(u,u_+)+\int_{\Omega_+}V\left|u_{+}\right|^{p} dx \nonumber \\
      &\geq \int_{\Omega_+}|\nabla u_+|^p dx +\int_{\mathbb{R}^{N}}\int_{\mathbb{R}^{N}} \frac{\left|u_{+}(x)-u_{+}(y)\right|^{p}}{|x-y|^{N+ps}} dxdy \nonumber \\
      &\hspace*{1cm}+2^{\frac{p}{2}} \int_{\mathbb{R}^{N}}\int_{\mathbb{R}^{N}}\frac{\left(u_{+}(y) u_{-}(x)\right)^{\frac{p}{2}}}{|x-y|^{N+ps}} dxdy +\int_{\Omega_+}V|u_+|^p dx.
	\end{align}
	Dividing both sides of the above inequality \eqref{B-l2-1} by $\displaystyle{\int_{\Omega_{+}}g\left|u_{+}(x)\right|^{p} dx}$, and using the definition of $\lambda_1(\Omega_+)$, we have
	\begin{align*}
		\nu(\Omega) &\geq \lambda_{1}\left(\Omega_{+}\right) + 2^{\frac{p}{2}} \cfrac{\int_{\mathbb{R}^{N}}\int_{\mathbb{R}^{N}} \frac{\left(u_{+}(y) u_{-}(x)\right)^{\frac{p}{2}}}{|x-y|^{N+ps}} d x d y}{\int_{\Omega_{+}}g\left|u_{+}(x)\right|^{p} dx}.
	\end{align*}
 This shows that $\nu(\Omega)>\lambda_{1}\left(\Omega_{+}\right)$. 
 Also, making use of the H\"older's inequality twice and the Sobolev inequality \eqref{SEE} with the inequality \eqref{B-l2-1}, we get
 \begin{align}\label{B-l2-2}
     \int_{\Omega_{+}}g\left|u_{+}\right|^{p} dx &\leq \left(\int_{\Omega_{+}}|g|^{\frac{N}{ps}}dx \right)^{\frac{ps}{N}}\left(\int_{\Omega_{+}}\left|u_{+}\right|^{\frac{Np}{N-ps}}dx\right)^{\frac{N-ps}{N}} \nonumber \\
     &\leq \|g\|_{L^{\frac{N}{ps}}(\Omega)}\left(\int_{\Omega_{+}}\left|u_{+}\right|^{\frac{Np}{N-p}}dx\right)^{\frac{N-p}{N}}\left(\int_{\Omega_{+}}dx\right)^{\frac{p-ps}{N-ps}}\nonumber \\
     &=|\Omega_+|^{\frac{p-ps}{N-ps}}\|g\|_{L^{\frac{N}{ps}}(\Omega)} \|u_+\|_{L^{p^*}(\Omega)}^p \nonumber\\
     &\leq C(N,p)\|g\|_{L^{\frac{N}{ps}}(\Omega)}|\Omega_+|^{\frac{p-ps}{N-ps}}\|\nabla u_+\|_{L^{p}(\Omega)}^p \nonumber \\
     &= C(N,p,s)|\Omega_+|^{\frac{p-ps}{N-ps}} \int_{\Omega_+}|\nabla u_+|^p dx\,, \text{ where }C(N,p,s)= C(N,p)\|g\|_{L^{\frac{N}{ps}}(\Omega)}\nonumber \\
     & \leq  C(N,p,s)|\Omega_+|^{\frac{p-ps}{N-ps}} \left( H_{s,p}(u,u_+)+\int_{\Omega_+}V|u_+|^p dx \right) \nonumber \\
     &=C(N,p,s)|\Omega_+|^{\frac{p-ps}{N-ps}} \nu(\Omega) \int_{\Omega_{+}}g\left|u_{+}\right|^{p} dx.
 \end{align}
 Dividing both sides in above inequality \eqref{B-l2-2} by $\displaystyle{\int_{\Omega_{+}}g\left|u_{+}\right|^{p} dx}$, we arrive at
 \begin{equation}
     \nu(\Omega) \geq C(N,p,s)|\Omega_+|^{-\frac{p-ps}{N-ps}}.
 \end{equation}	
  Further, following similar arguments as above by replacing $u_+$ with $u_-$, one can obtain $\nu>\lambda_{1}\left(\Omega_{-}\right)$ and $\nu(\Omega) \geq C(N,p,s)\left|\Omega_{-}\right|^{-\frac{p-ps}{N-ps}}$. This completes the proof.
 \end{proof}
The next result discusses the properties of the principal eigenvalue. 
\begin{theorem}\label{B-thm4}
    The principal eigenvalue $\lambda_1$ is simple and isolated.
\end{theorem}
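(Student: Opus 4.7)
The plan is to prove simplicity and isolation separately, reusing two tools already developed in the paper: the hidden-convexity computation driving the proof of Theorem~\ref{B-thm3}, and the nodal lower bound in Corollary~\ref{B-lem2}.

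For \emph{simplicity}, I would take any two eigenfunctions $u,v$ associated with $\lambda_1$. By Theorem~\ref{B-thm2} (combined with the strong maximum principle, Theorem~\ref{B-smp}) we may assume $u,v>0$ in $\Omega$ and, after rescaling, $\int_\Omega g u^p\,dx=\int_\Omega g v^p\,dx=1$. Setting $q_t:=(tu^p+(1-t)v^p)^{1/p}$ for $t\in(0,1)$ as in the proof of Theorem~\ref{B-thm3}, the same chain \eqref{B8}--\eqref{B12} gives $\int_\Omega g q_t^p\,dx=1$ together with
\[
\lambda_1 \;\le\; H_{s,p}(q_t,q_t)+\int_\Omega V q_t^p\,dx \;\le\; t\lambda_1+(1-t)\lambda_1 \;=\; \lambda_1,
\]
where the first inequality is the variational characterization $\lambda_1=\mathcal{R}$ applied to $q_t$. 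Equality therefore propagates through every step; in particular equality in the pointwise bound \eqref{B10} forces $\nabla u/u=\nabla v/v$ a.e., so $\log(u/v)$ has vanishing distributional gradient on the connected domain $\Omega$, whence $u=cv$ for some constant $c>0$.

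For \emph{isolation}, I would argue by contradiction: suppose there exist eigenvalues $\lambda_n>\lambda_1$ with $\lambda_n\to\lambda_1$ and eigenfunctions $u_n$ normalized by $\int_\Omega g|u_n|^p\,dx=1$. Testing \eqref{B2} against $u_n$ yields $\|u_n\|_{X_0^{s,p}(\Omega)}^p+\int_\Omega V|u_n|^p\,dx=\lambda_n$, so $(u_n)$ is bounded in $X_0^{s,p}(\Omega)$. By reflexivity and the compact embedding \eqref{CPT MET} into $L^{p_s^*}(\Omega)$ (valid since $p_s^*<p^*$), a subsequence satisfies $u_n\rightharpoonup u$ weakly in $X_0^{s,p}(\Omega)$ and $u_n\to u$ a.e.\ in $\Omega$. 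Repeating the H\"older--dominated-convergence step of the proof of Theorem~\ref{B-thm1} gives $\int_\Omega g|u|^p\,dx=1$ and $\int_\Omega V|u_n|^p\,dx\to\int_\Omega V|u|^p\,dx$; combined with weak lower semicontinuity of $\|\cdot\|_{X_0^{s,p}(\Omega)}^p$, this forces $u$ to attain the infimum $\mathcal{R}$ and hence to be a principal eigenfunction. By Theorem~\ref{B-thm2} we may assume $u>0$ in $\Omega$. Since $\lambda_n>\lambda_1$, Theorem~\ref{B-thm3} makes each $u_n$ sign-changing, so $\Omega_n^-:=\{u_n<0\}$ has positive measure, and Corollary~\ref{B-lem2} gives $|\Omega_n^-|\ge (C/\lambda_n)^{(N-ps)/(p-ps)}$, which is uniformly bounded below since $\lambda_n$ stays bounded. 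On the other hand $u_n\to u>0$ a.e.\ in $\Omega$ forces $|\Omega_n^-|\to 0$ by dominated convergence, producing the desired contradiction.

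The main technical obstacle will be in the simplicity step: one must transfer the strict convexity of $|\cdot|^p$ (for $p>1$) through both the local gradient inequality \eqref{B11} and the nonlocal analogue \cite[Lemma~4.1]{FP14} in order to extract the pointwise identity $\nabla u/u=\nabla v/v$, and strict positivity from Theorem~\ref{B-smp} is what allows these quotients to be interpreted and the resulting constant of proportionality to be propagated over the whole connected domain. The isolation step is then mostly a soft compactness argument; its one delicate point is verifying that the weak limit $u$ is not identically zero, which is exactly where the integrability assumption $q>N/(sp)$ on $g$ enters through the subcritical H\"older pairing with the embedding into $L^{p_s^*}(\Omega)$.
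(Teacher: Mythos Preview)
Your proposal is correct and follows essentially the same approach as the paper. The simplicity argument via the hidden-convexity inequality for $q_t$ is identical; for isolation, the paper re-derives the nodal lower bound on $|\Omega_n^-|$ directly (rather than invoking Corollary~\ref{B-lem2}) and closes the contradiction with Egorov's theorem instead of your dominated-convergence step, but the overall logic is the same.
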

\begin{proof}
    We shall first show that the principal eigenvalue $\lambda_1$ is simple.  Let $u,v$ be two eigenfunctions associated with $\lambda_1$. Without loss of generality, assume that $u,v>0$ in $\Omega$ and $${\int_{\Omega}g|u|^p dx=\int_{\Omega}g|v|^p dx=1}.$$
    Define $q_{u,v}:=\left(\frac{u^p+v^p}{2}\right)^{\frac{1}{p}}$. Then from \eqref{B8}, \eqref{B11} and \eqref{B13} for the case  $t=\frac{1}{2}$,  we obtain $$\int_{\Omega}g|q_{u,v}|^p dx=1,$$
    \begin{equation}\label{B_16}
        |\nabla q_{u,v}|^p \leq \frac{1}{2} |\nabla u|^p+\frac{1}{2}|\nabla v|^p
    \end{equation}
    and 
    \begin{align}\label{B15}
        H_{s,p}(q_{u,v},q_{u,v})+\int_{\Omega}V|q_{u,v}|^p dx &\leq \frac{1}{2}\left( H_{s,p}(u,u)+\int_{\Omega}V|u|^p dx +H_{s,p}(v,v)+\int_{\Omega}V|v|^p dx \right) \nonumber \\
        &=\frac{1}{2}(\lambda_1+\lambda_1) =\lambda_1=\mathcal{R}.
    \end{align}
    Now by the definition of $\mathcal{R}$, we should have the equality in $\eqref{B15}$, and it can be achieved when the equality holds in \eqref{B_16}, and in \eqref{B12} for $t=1/2$. Since the inequality \eqref{B_16} is obtained from \eqref{B10}, and the property of strict convexity of the map $t \mapsto t^p$ has been used to obtain \eqref{B10}. Therefore we must have $\frac{\nabla u}{u} = \frac{\nabla v}{v}$ a.e. in $\Omega$. This implies that  $\nabla(\frac{u}{v})=0$ a.e. in $\Omega$, and hence there exist a constant $c>0$ such that $u=cv$ a.e. in $\Omega$. This asserts that $\lambda_1$ is simple.
    \par We now prove that the principal eigenvalue $\lambda_1$ is isolated. Let $(\lambda, v)$ be an eigenpair for the problem \eqref{B} such that $\lambda>\lambda_1$ with $\displaystyle{\int_{\Omega}g|v|^p dx =1}$. Then by the Theorem \ref{B-thm3}, we have $v_- \neq 0$. Using $v_- \in X_0^{s,p}(\Omega)$ as a test function in weak formulation \eqref{B2} of the problem \eqref{B} for $(\lambda, v)$, we obtain 
    \begin{equation}\label{B16}
        H_{s,p}(v,v_-)-\int_{\Omega}V|v_-|^p dx= -\lambda \int_{\Omega}g|v_-|^p dx.
    \end{equation}
    Note that 
    \begin{align}
        \nabla v \cdot \nabla (v_-)&=-(\nabla (v_-))^2 \text{ a.e. in } \Omega, \label{B17'}\text{ and }\\
        -(v(x)-v(y))(v_-(x)-v_-(y))&=-\left((v_+(x)-v_+(y)-(v_-(x)-v_-(y)\right)(v_-(x)-v_-(y)) \nonumber \\
        &\geq (v_-(x)-v_-(y))^2\label{B17''}.
    \end{align}
    By the inequalities \eqref{B17'} and \eqref{B17''}, we get,
    \begin{equation}\label{B17}
        -H_{s,p}(v,v_-) \geq \int_{\Omega}|\nabla (v_-)|^p dx + \int_{\mathbb{R}^N}\int_{\mathbb{R}^N} \frac{|v_-(x)-v_-(y)|^p}{|x-y|^{N+ps}}dxdy.
    \end{equation}
   %$\nabla v \cdot \nabla (v_-)=-(\nabla (v_-))^2$ a.e. in $\Omega$, and also
    %\begin{equation}\label{B17}
      %  |v(x)-v(y)|^{p-2}(v(x)-v(y))(v_-(x)-v_-(y)) \geq |v_-(x)-v_-(y)|^p\,\,\,\, \forall\,x,y \in \mathbb{R}^N.
   % \end{equation}
    So using the Sobolev inequality \eqref{SEE} and the inequality \eqref{B17} with the fact that $V\geq 0$, it follows that 
    \begin{align}\label{B18}
        \frac{1}{C}\|v_-\|_{L^{p^*}(\Omega)} & \leq  \int_{\Omega}|\nabla (v_-)|^p dx\nonumber\\
        &\leq \int_{\Omega}|\nabla (v_-)|^p dx + \int_{\mathbb{R}^N}\int_{\mathbb{R}^N} \frac{|v_-(x)-v_-(y)|^p}{|x-y|^{N+ps}}dxdy   \nonumber \\
        & \leq -H_{s,p}(v,v_-) \nonumber\\
        & \leq -H_{s,p}(v,v_-)+ \int_{\Omega}V|v_-|^p dx \nonumber\\    &= -\left( H_{s,p}(v,v_-)-\int_{\Omega}V|v_-|^p dx\right).
    \end{align}
    Now, using the inequality \eqref{B16} in \eqref{B18} and then making use of H\"older inequality twice, we get 
    \begin{align}\label{B19}
        \frac{1}{C}\|v_-\|_{L^{p^*}(\Omega)}  &\leq  \lambda \int_{\Omega}g|v_-|^p dx = \lambda \int_{\Omega_-}g|v_-|^p dx \leq  \lambda \|g\|_{L^{\frac{N}{p}}(\Omega_-)} \|v_-\|_{L^{p^*}(\Omega)} \nonumber \\
         &= \lambda \left(\int_{\Omega_-} |g|^{\frac{N}{p}} dx\right)^{\frac{p}{N}} \|v_-\|_{L^{p^*}(\Omega)} \leq \lambda \|g\|_{L^{\frac{N}{ps}}(\Omega)}|\Omega_-|^{\frac{p-ps}{N}}\|v_-\|_{L^{p^*}(\Omega)}.
    \end{align}
   % \begin{align}\label{B19}
   %     \frac{1}{C}&\|v_-\|_{L^{p^*}(\Omega)}  \leq  \lambda \int_{\Omega}g|v_-|^p dx  \nonumber \\
     %   &\leq  \lambda \|g\|_{L^{\frac{N}{sp}}(\Omega)} \|v_-\|_{L^{p_s^*}(\Omega)} \nonumber \\
     %   &= \lambda \|g\|_{L^{\frac{N}{sp}}(\Omega)} \left(\int_{\Omega_-} |v_-(x)|^{p_s^*} dx\right)^{\frac{1}{p_s^*}}\nonumber \\
    %    & \leq  \lambda |\Omega_-|^{\frac{p-ps}{Np}} \|g\|_{L^{\frac{N}{sp}}(\Omega)} \left(\int_{\Omega_-} |v_-(x)|^{p^*} dx\right)^{\frac{1}{p^*}}\nonumber \\
      %  &=\lambda |\Omega_-|^{\frac{p-ps}{Np}}\|g\|_{L^{\frac{N}{sp}}(\Omega)}\|v_-\|_{L^{p^*}(\Omega)},  
   % \end{align}}
 Since $\|v_-\|_{L^{p^*}(\Omega)} \neq 0$, we obtain 
    \begin{equation}\label{B20}
        |\Omega_-| \geq \left( \frac{1}{C \lambda \|g\|_{L^{\frac{N}{ps}}(\Omega)}} \right) ^{\frac{N}{p-ps}}.
    \end{equation}
Following the above arguments for $-v$ in place of the eigenfunction $v$, one can infer that
    \begin{equation}\label{B21}
        |\Omega_+| \geq \left( \frac{1}{C \lambda \|g\|_{L^{\frac{N}{sp}}(\Omega)}} \right) ^{\frac{N}{p-ps}}.
    \end{equation}
On the contrary, let us suppose that $\lambda_1$ is not isolated. Then there exists a sequence of eigenvalues $(\nu_n)$ such that $\nu_n \searrow \lambda_1$. Let $u_n$ be an eigenfunction associated with $\nu_n$ such that ${\int_{\Omega}g|u_n|^p dx=1}$. Define $\Omega_{n_\pm}=\{ x \in \Omega: u_n(x) \gtrless 0\}$. For all $n \in \mathbb{N}$, we have $\nu_1 \geq \nu_n$. Hence by \eqref{B20} we have 
    \begin{equation}\label{B22}
        |\Omega_{n_-}| \geq \left( \frac{1}{C \nu_1 \|g\|_{L^{\frac{N}{sp}}(\Omega)}} \right) ^{\frac{N}{p-ps}} = M \,\,\text{(say)}.
    \end{equation}
Further, since $(\nu_n)$ is a bounded sequence, hence proceeding as in the proof of Theorem \ref{B-thm1}, it follows that, up to a subsequence (still denoted by $(u_n)$), we have
\begin{align*}
    u_n &\rightharpoonup u \,\,\text{in }\, X_0^{s,p}(\Omega),\\
    u_n&\rightarrow u \,\,\text{in }\, L^q(\Omega) \,\, \text{for } \, q\in (1, p^*) \,,\,\text{and}\,\\
    u_n &\rightarrow u \,\,\text{a.e. in } \Omega.
\end{align*}
 Moreover, we have $\displaystyle{\int_{\Omega} g|u|^p dx=1}$ and $u$ is an eigenfunction corresponding to $\lambda_1$. Without loss of generality, let us assume $u>0$ in $\Omega$. As $u_n \rightarrow u$ a.e. in $\Omega$, by Egorov's theorem there exists a compact set $A \subset \Omega$ such that $|\Omega \setminus A| <\frac{M}{4}$ and $u_n \rightarrow u$ uniformly in $A$. Since $u>0$ in $\Omega$ and $A$ is compact, there exists $\epsilon>0$ such that $u\geq \epsilon$ in $A$. Since $u_n \rightarrow u$ uniformly in $A$, there exists $n_0\in \mathbb{N}$ such that for all $x\in A$,
    $$|u_n(x) -u(x)| < \frac{\epsilon}{2} \,,\, \forall\, n \geq n_0. $$
    This implies that
    $$u_{n_0}(x) > u(x)- \frac{\epsilon}{2} \geq \epsilon-\frac{\epsilon}{2}=\frac{\epsilon}{2},\,\, \forall\, x \in A.$$
    As a consequence we have $A \subset \Omega_{{n_0}_+}$. This again implies that  $\Omega_{{n_0}_-} \subset \Omega \setminus A$. Thus we get $|\Omega_{{n_0}_-}|\leq |\Omega \setminus A|<M$, which is a contradiction to \eqref{B22}. Hence the proof is complete.
\end{proof}
\section{A monotone sequence of positive eigenvalues and the boundedness of eigenfunctions}
The present section is devoted to showing the existence of infinitely many positive eigenvalues to \eqref{B}, and it has been obtained by using the {\it Ljusternik-Schnirelmann theory} \cite{S88}. We also show that the set of all positive eigenvalues to \eqref{B} is a {\it closed set}. In addition to this, the {\it boundedness} of all eigenfunctions associated with positive eigenvalues of \eqref{B} has been proved. 
\begin{theorem}\label{B-thm6}
    Let $q\in \left(\frac{N}{sp}, \infty\right)$ and  $V, g \in L^q(\Omega)$ where $V\geq 0$, $g>0$ a.e. in $\Omega$. Then weighted eigenvalue problem \eqref{B} possesses a sequence of positive eigenvalues diverging to $\infty$, i.e., there exists a sequence of eigenvalues $(\lambda_n)$ to the problem \eqref{B} such that
    \begin{equation*}
        0<\lambda_1 \leq \lambda_2 \leq \cdots\leq \lambda_n \leq \cdots
    \end{equation*}
    and
     $\lambda_n \rightarrow \infty$ as $n \rightarrow \infty$.
\end{theorem}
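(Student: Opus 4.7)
My plan is to apply the Ljusternik--Schnirelmann principle (Theorem \ref{LS}) on the reflexive Banach space $X = X_0^{s,p}(\Omega)$ with the two $p$-homogeneous, even, $C^1$ functionals
\begin{equation*}
T(u) = H_{s,p}(u,u) + \int_\Omega V(x)|u|^p\,dx, \qquad H(u) = \int_\Omega g(x)|u|^p\,dx.
\end{equation*}
Both vanish at $0$, and since $V \geq 0$ we have $T(u) \geq \|u\|_{X_0^{s,p}(\Omega)}^p$, so $T$ is bounded below and in fact coercive. A constrained critical point $u$ on $\mathcal{M} = \{u \in X : H(u) = 1\}$ satisfies $T'(u) = \mu H'(u)$ for a Lagrange multiplier $\mu$, which is exactly the weak formulation \eqref{B2} with $\lambda = \mu$. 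Pairing this equation with $u$ and invoking $p$-homogeneity yields $p\,T(u) = \mu \cdot p\,H(u) = \mu p$, so $\mu = T(u) > 0$ on $\mathcal{M}$. Thus every critical value of $T|_{\mathcal{M}}$ is a positive eigenvalue of \eqref{B}.

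\textbf{Palais--Smale on $\mathcal{M}$.} Let $(u_n) \subset \mathcal{M}$ with $T(u_n) \leq c$ and $T'(u_n) - \mu_n H'(u_n) \to 0$ in $X^*$ for $\mu_n = T(u_n) \in [0,c]$. By the norm equivalence \eqref{norm-equiv}, $(u_n)$ is bounded in $X_0^{s,p}(\Omega)$; a subsequence satisfies $u_n \rightharpoonup u$ weakly in $X$ and, by \eqref{CPT MET}, strongly in $L^r(\Omega)$ for every $r < p^*$. Since $q > N/(sp)$, a direct computation gives $pq/(q-1) < p_s^* < p^*$, so H\"older's inequality with exponents $q$ and $q/(q-1)$ produces
\begin{equation*}
\Bigl|\int_\Omega g|u_n|^{p-2}u_n(u_n-u)\,dx\Bigr| \leq \|g\|_{L^q(\Omega)}\, \|u_n\|_{L^{pq/(q-1)}(\Omega)}^{p-1}\,\|u_n - u\|_{L^{pq/(q-1)}(\Omega)} \to 0,
\end{equation*}
and identically for the $V$-integral. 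Testing the asymptotic equation against $v = u_n - u$ then reduces to
\begin{equation*}
H_{s,p}(u_n, u_n - u) + \int_\Omega V|u_n|^{p-2}u_n(u_n - u)\,dx \longrightarrow 0.
\end{equation*}
The classical $(S_+)$-property of $-\Delta_p$, the analogous property of $(-\Delta_p)^s$, and the monotonicity of $u \mapsto V|u|^{p-2}u$ combine to upgrade weak to strong convergence: $u_n \to u$ in $X_0^{s,p}(\Omega)$. Hence $T|_{\mathcal{M}}$ satisfies (PS).

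\textbf{Extraction, monotonicity, positivity.} Theorem \ref{LS} now delivers the sequence
\begin{equation*}
\lambda_n = \inf_{A \in \Sigma_n} \max_{u \in A} T(u), \qquad \Sigma_n = \{A \in \Sigma : \gamma(A) \geq n\},
\end{equation*}
each a positive eigenvalue of \eqref{B} by the Lagrange-multiplier computation. Monotonicity $\lambda_n \leq \lambda_{n+1}$ is immediate from $\Sigma_{n+1} \subset \Sigma_n$, and positivity has already been observed.

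\textbf{Divergence $\lambda_n \to \infty$ (main obstacle).} Suppose for contradiction that $\lambda_n \leq M$ for every $n$. The critical set
\begin{equation*}
K_M = \{u \in \mathcal{M} : (T|_{\mathcal{M}})'(u) = 0,\ T(u) \leq M\}
\end{equation*}
is symmetric, does not contain $0$, and is compact by the (PS) property just established. Hence $\gamma(K_M) = k < \infty$, and Theorem \ref{genus}(v) yields $\delta > 0$ with $\gamma(N_\delta(K_M)) = k$. A standard equivariant deformation along the negative gradient flow of $T|_{\mathcal{M}}$, carried out below the level $M + 1$ and away from $K_M$, allows any $A \in \Sigma_n$ with $\max_A T$ close to $\lambda_n \leq M$ to be retracted into $N_\delta(K_M)$ without increasing $T$ beyond this bound; Theorem \ref{genus}(ii)--(iii) then force $n \leq \gamma(A) \leq \gamma(N_\delta(K_M)) = k$ for every $n$, the desired contradiction. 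The delicate point here is the uniform deformation on the nonlinear constraint $\mathcal{M}$, but this is standard once (PS) and the $C^1$-smoothness of $H$ are in hand; the genuinely technical work sits in the $(S_+)$-step of the Palais--Smale verification for the sum operator.
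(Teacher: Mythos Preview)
Your setup and Palais--Smale verification coincide with the paper's, up to cosmetic differences: the paper carries a factor $1/p$ in $T$ and $H$ and writes out Simon's inequalities explicitly where you invoke the $(S_+)$-property, but the content is the same.

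The substantive difference is in the divergence step. The paper does \emph{not} argue via deformation and genus of the critical set. Instead it exploits separability: fixing a biorthogonal system $\{w_m,w_m^*\}$ in $X_0^{s,p}(\Omega)$ and setting $X_n=\overline{\operatorname{span}\{w_m:m\ge n\}}$ (codimension $n-1$), Theorem~\ref{genus}(vi) guarantees $A\cap X_n\neq\emptyset$ for every $A\in\Sigma_n$, so one can select $v_n\in X_n\cap\mathcal M$ with $pT(v_n)$ uniformly bounded. Passing to a weak limit $v$ gives simultaneously $\int_\Omega g|v|^p\,dx=1$ (hence $v\neq0$) and $\langle w_m^*,v\rangle=\lim_n\langle w_m^*,v_n\rangle=0$ for all $m$ (hence $v=0$), a contradiction. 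No deformation lemma is needed.

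Your deformation sketch, by contrast, has a genuine gap. The claim that $A\in\Sigma_n$ with $\max_AT$ near $\lambda_n$ ``can be retracted into $N_\delta(K_M)$'' along the negative gradient flow is not what the standard deformation lemma delivers: the flow may enter and exit $N_\delta(K_M)$ repeatedly, and there is no continuous stopping time producing an odd map $A\to N_\delta(K_M)$. A second issue is that ``below level $M+1$ and away from $K_M$'' the constrained gradient need not be bounded away from zero, since there may well be critical points with $T\in(M,M+1]$ not lying in $K_M$---you assumed only that the \emph{minimax} values $\lambda_n$ stay below $M$, not that all critical values do. A correct genus route works at the single level $c=\lim\lambda_n$: one uses the deformation lemma to push $\{T\le c+\epsilon\}\setminus N_\delta(K_c)$ into $\{T\le c-\epsilon\}$, splits $A=(A\cap\overline{N_\delta})\cup\overline{A\setminus N_\delta}$, and bounds $\gamma(A)$ by subadditivity; this is standard but more delicate than your sketch suggests. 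The paper's biorthogonal-system argument sidesteps all of this.
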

\begin{proof}
    For $u \in X_0^{s,p}(\Omega)$, define
    \begin{align}\label{t-h}
        T(u):=\frac{1}{p}\left [H_{s,p}(u,u)+\int_{\Omega}V|u|^pdx\right] \,\,\text{ and }\,\,
        H(u):=\frac{1}{p}\int_{\Omega}g|u|^pdx.
    \end{align}
    Clearly, $T$ is bounded from below in $X_0^{s,p}(\Omega)$, since $\frac{1}{p}\|u\|_{X_0^{s,p}(\Omega)}^p \leq T(u)$ for all $u \in X_0^{s,p}(\Omega)$. Moreover, $T$ is even and  $T(0)=0$.\\
    \noindent \textbf{Claim:} $T$ satisfies {\it Palais-Smale condition} on $\mathcal{M}=\{u \in X_0^{s,p}(\Omega) : pH(u)=1\}$.\\
    To prove this, consider a sequence $(u_n)$ in $\mathcal{M}$ satisfying 
    \begin{enumerate}[(i)]
       \item $|T(u_n)| \leq M$ for all $n \in \mathbb{N}$.
        \item For $t_n = \frac{\langle T'(u_n), u_n\rangle}{\langle H'(u_n), u_n\rangle}$, we have $T'(u_n)- t_n H'(u_n) \rightarrow 0$ as $n \rightarrow \infty$, that is,  
            \begin{equation*}
            \langle T'(u_n)-t_n H'(u_n), w \rangle \rightarrow 0 \text{ as } n \rightarrow \infty,~ \forall\, w \in X_0^{s,p}(\Omega).
            \end{equation*}
    \end{enumerate}
    We guarantee the existence of a convergent subsequence of $(u_n)$ in $\mathcal{M}$.
    By (i), we have  $$0 \leq \|u_n\|_{X_0^{s,p}(\Omega)}^p \leq pT(u_n) \leq Mp~\text{for every}~n\in \mathbb{N}.$$
    This implies that the sequence $(u_n)$ is bounded in $X_0^{s,p}(\Omega)$. Since $X_0^{s,p}(\Omega)$ is a reflexive Banach space, we have $u_n\rightharpoonup u$ weakly in $X_0^{s,p}(\Omega)$. By the compact embedding in \eqref{CPT MET}, we get $u_n \rightarrow u$ (upto a subsequence) in $L^q(\Omega)$ for all $1 \leq q <p^*$ and $u_n\rightarrow u$ a.e. in $\Omega$. Therefore, proceeding as in the proof of Theorem \ref{B-thm1}, we get 
    \begin{equation}\label{B23}
        \int_{\Omega}g|u|^p dx=1.
    \end{equation}
    This shows that $u \not\equiv 0$ and $u \in \mathcal{M}$. By the definition of $t_n$, we have
    \begin{equation}\label{B24}
        \langle T'(u_n)-t_n H'(u_n), u_n \rangle =0
        \text{ for all } n\in \mathbb{N}.   
    \end{equation}
    Also, using condition (ii), we obtain
    \begin{equation}\label{B25}
        \langle T'(u_n)-t_n H'(u_n), u \rangle \rightarrow 0
        \text{ as } n\rightarrow \infty \,\,\forall\, u \in X_0^{s,p}(\Omega).
    \end{equation}
    Thus \eqref{B24} we get
    \begin{equation}\label{B26}
        \langle T'(u_n)-t_n H'(u_n), u_n-u \rangle \rightarrow 0 \text{ as } n\rightarrow \infty.    
    \end{equation}
    On using the condition (i) and $u_n\in \mathcal{M}$, we have
    \begin{align}\label{B27}
        \langle t_n H'(u_n), u_n-u \rangle &= \frac{\langle T'(u_n), u_n\rangle}{\langle H'(u_n), u_n\rangle}\langle H'(u_n), u_n-u \rangle \nonumber \\
        &= \frac{\langle T'(u_n), u_n\rangle}{pH(u_n)} \langle  H'(u_n), u_n-u \rangle \nonumber \\
        &= \langle T'(u_n), u_n\rangle \int_{\Omega} g|u_n|^{p-2} u_n (u_n-u) dx \nonumber \\
        & = p T(u_n) \int_{\Omega} g|u_n|^{p-2} u_n (u_n-u) dx \nonumber \\
        &\leq Mp \int_{\Omega} g|u_n|^{p-2} u_n (u_n-u) dx.
    \end{align}
    Again, using $g\in L^{\frac{N}{sp}}(\Omega)$ and the H\"older's inequality, we get
    \begin{equation}\label{B28}
        \int_{\Omega} g|u_n|^{p-2} u_n (u_n-u) dx \leq \|g\|_{L^{\frac{N}{ps}}(\Omega)} \left(\int_{\Omega} ||u_n|^{p-2}u_n(u_n-u)|^{\frac{N}{N-sp}}dx\right)^{\frac{N-sp}{N}}.
    \end{equation}
    Since, $u_n\rightarrow u$ a.e. in $\Omega$, we have $|u_n|^{p-2} u_n (u_n-u) \rightarrow 0$ a.e. in $\Omega$. Moreover, we have $p_s^* <p^*$. Thus by \cite[Theorem 4.9]{B11} there exists $h \in L^{p_s^*}(\Omega)$ such that $|u_n| \leq h$ a.e. in $\Omega$, which implies $||u_n|^{p-2} u_n (u_n-u)|^{\frac{N}{N-sp}} \leq 2^\frac{N}{N-sp}h^{p_s^*}$. Therefore, using the dominated convergence theorem on the RHS of the integral \eqref{B28}, we obtain
    \begin{equation}\label{B29}
        \int_{\Omega} g|u_n|^{p-2} u_n (u_n-u) dx \rightarrow 0 \text{ as } n \rightarrow \infty.
    \end{equation}
    Following the arguments as above, we have 
    \begin{equation}\label{B30}
        \int_{\Omega} V|u_n|^{p-2} u_n (u_n-u) dx \rightarrow 0 \text{ as } n \rightarrow \infty.
    \end{equation}
    Since $V\geq 0$ and $g>0$ in $\Omega$, from inequality \eqref{B27} we derive that 
    \begin{equation}\label{B30'}
         \langle t_n H'(u_n), u_n-u \rangle \rightarrow 0 \text{ as } n \rightarrow \infty.
    \end{equation}
    Using \eqref{B30'} in \eqref{B26}, it follows that 
    \begin{equation*}
        \langle T'(u_n), u_n-u \rangle \rightarrow 0 \text{ as } n\rightarrow \infty. 
    \end{equation*}
    Moreover, as $u_n \rightharpoonup u$ weakly in $X_0^{s,p}(\Omega)$ implies
    \begin{equation*}\label{B32'}
         \langle T'(u), u_n-u \rangle \rightarrow 0 \text{ as } n\rightarrow \infty.
    \end{equation*}
    Consequently, we get
    \begin{equation}\label{B31'}
         \langle T'(u_n)-T'(u), u_n-u \rangle \rightarrow 0 \text{ as } n\rightarrow \infty. 
    \end{equation}
    Now, we set 
    $$\langle T'(u_n)-T'(u), u_n-u \rangle =J_1+J_2+J_3,$$
    where
    \begin{equation*}\label{Bi1}
        J_1=\int_{\Omega}(| \nabla u_n | ^{p-2}\nabla u_n -| \nabla u| ^{p-2}\nabla u)(\nabla u_n-\nabla u) dx,
    \end{equation*}
  %  \scalebox{0.75}{\parbox{1\linewidth}{% 
    \begin{align*}\label{Bi2}
        J_2 =& \int_{\mathbb{R}^N}\int_{\mathbb{R}^N} \left[\frac{|u_n(x)-u_n(y)| ^{p-2}(u_n(x)-u_n(y))(u_n(x)-u_n(y)-(u(x)-u(y))}{|x-y|^{N+sp}}\right] dxdy\nonumber\\
        &-\int_{\mathbb{R}^N}\int_{\mathbb{R}^N}\left[\frac{| u(x)-u(y)| ^{p-2}(u(x)-u(y))(u_n(x)-u_n(y)-(u(x)-u(y))}{|x-y|^{N+sp}}\right] dx dy,
    \end{align*}
    and 
    \begin{equation*}\label{Bi3}
        J_3=\int_{\Omega} V(|u_n|^{p-2} u_n- |u|^{p-2} u )  (u_n-u) dx.
    \end{equation*}
    Since $u_n \rightharpoonup u$ weakly in $X_0^{s,p}(\Omega)$, we have
    \begin{equation}\label{B31}
         \int_{\Omega} V|u|^{p-2} u(u_n-u) dx \rightarrow 0 \text{ as } n \rightarrow \infty.
    \end{equation}
    As an outcome of \eqref{B30} and \eqref{B31}, it follows that $J_3\rightarrow 0$ as $n\rightarrow \infty$.
    Thus, we get
    \begin{equation}\label{B33}
        J_1+J_2 \rightarrow 0 \text{ as } n \rightarrow \infty.
    \end{equation}
       Recall the following Simon's inequalities
     \begin{align}
        &(| \xi| ^{p-2}\xi-| \eta| ^{p-2}\eta)(\xi-\eta) \geq A | \xi-\eta| ^p, \ p \geq2\label{ts1} \text{ and }\\
        &(|\xi| ^{p-2}\xi-|\eta| ^{p-2}\eta)(\xi-\eta) \geq A\frac{| \xi-\eta| ^2}{(|\xi| +| \eta| )^{2-p}}, \ 1<p<2.\label{ts2}
    \end{align}
    where $\xi,~\eta \in \mathbb{R}$ and $A>0$ is a constant. For the case $p \geq 2$, using the equation \eqref{ts1} with $\xi=\nabla u_n, ~\eta=\nabla u$ in $J_1$, we get
    \begin{equation}\label{B35'}
        \int_{\Omega} |\nabla(u_n-u)|^p dx \leq A J_1.
    \end{equation}
    Similarly, using \eqref{ts1} with $\xi=u_n(x)-u_n(y), ~\eta=u(x)-u(y)$ in $J_2$, we have
    \begin{equation}\label{B35''}
        \int_{\mathbb{R}^N}\int_{\mathbb{R}^N} \frac{|u_n(x)-u_n(y)-(u(x)-u(y))|^p}{|x-y|^{N+sp}} dx dy \leq A J_2.
    \end{equation}
    On combining \eqref{B35'} and \eqref{B35''}, we obtain 
    \begin{equation}\label{B35-1}
        \|u_n-u\|_{X_0^{s,p}(\Omega)}^p \leq A(J_1+J_2).
    \end{equation}
    Again, for the case $1<p<2$, taking $\xi=\nabla u_n, ~\eta=\nabla u$ in \eqref{ts2} and substituting in $J_1$, we get
    \begin{equation}\label{B35'''}
        \int_{\Omega} |\nabla(u_n-u)|^p dx \leq A \int_{\Omega} (| \nabla u_n| ^{p-2}\nabla u_n-| \nabla u| ^{p-2}\nabla u)^{\frac{p}{2}}(\nabla u_n-\nabla u)^{\frac{p}{2}}(| \nabla u_n| +| \nabla u| )^{\frac{p(2-p)}{2}} dx.
    \end{equation}
    Since $\displaystyle{\int_{\Omega} |\nabla v|^p dx} \leq \|v\|_{X_0^{s,p}(\Omega)}^p$ for all $v \in X_0^{s,p}(\Omega)$, by the H\"older's inequality with the exponents $\frac{2}{p}$ and $\frac{2}{2-p}$ in \eqref{B35'''}, we get
    \begin{align}\label{B35'''-1}
        \int_{\Omega} |\nabla(u_n-u)|^p dx &\leq A \left(\int_{\Omega} (| \nabla u_n| ^{p-2}\nabla u_n-| \nabla u| ^{p-2}\nabla u)(\nabla u_n-\nabla u) dx\right)^{\frac{p}{2}} \nonumber\\
        & \hspace{2cm}\times \left( \int_{\Omega}(| \nabla u_n| +| \nabla u| )^p dx\right)^{\frac{2-p}{2}}\nonumber\\
        &\leq A(2M)^\frac{(2-p)}{2} \left(\int_{\Omega} (| \nabla u_n| ^{p-2}-| \nabla u| ^{p-2})(\nabla u_n-\nabla u) dx\right)^{\frac{p}{2}} \nonumber\\
        &= C J_1^{\frac{p}{2}}, \text{ where } C=A(2M)^\frac{(2-p)}{2} .
    \end{align}
    Proceeding as above with $\xi=u_n(x)-u_n(y), ~\eta=u(x)-u(y)$ in \eqref{ts2}, we deduce
    \begin{align}\label{B35'''-2}
        \int_{\mathbb{R}^N}\int_{\mathbb{R}^N}\frac{|(u_n-u)(x)-(u_n-u)(y)|^p}{|x-y|^{N+sp}}dxdy \leq C J_2^{\frac{p}{2}}. 
    \end{align}
    Now, combining \eqref{B35'''-1}, \eqref{B35'''-2} and using the convexity of the map $t \mapsto t^\frac{2}{p}$, we get 
    \begin{equation}\label{B-35-2}
        \|u_n-u\|_{X_0^{s,p}(\Omega)}^2 \leq C (J_1+J_2).
    \end{equation}
    Therefore, using \eqref{B33}, \eqref{B35-1} and \eqref{B-35-2}, we obtain
    \begin{equation}\label{B34}
        \|u_n-u\|_{X_0^{s,p}(\Omega)}^p = H_{s,p}(u_n-u,u_n-u) \rightarrow 0 \text{ as } n\rightarrow \infty.
    \end{equation}
    Hence $T$ satisfies the Palais-Smale condition.
      \par Therefore, applying the \textit{Ljusternik-Schnirelmann principle} \cite{S88} on $T$, it follows that the problem \eqref{B} has a sequence of positive critical values
    \begin{equation*}\label{B35}
        0<\lambda_1 \leq \lambda_2 \leq \cdots \leq \lambda_n \leq\cdots,
    \end{equation*}
    defined  by
    $$ \displaystyle{\lambda_n= \inf_{A \in \Sigma_n} \max_{u \in A} pT(u) =\inf_{A \in \Sigma_n} \max_{u \in A} \left(H_{s,p}(u,u)+\int_{\Omega}V|u|^p dx \right)},$$
    where $\Sigma_n= \{ A \in \Sigma \mid \gamma(A) \geq n \}$ and $\Sigma=\{A \subset X_0^{s,p}(\Omega) \setminus \{0\} \mid A \text{ is compact and }A=-A \}$. We now show that $\lambda_n \rightarrow \infty$ as $n \rightarrow \infty$. We prove it by contradiction. Suppose there exists $L>0$ such that $0<\lambda_n \leq L$ for all $n \in \mathbb{N}$. Since, $X_0^{s,p}(\Omega)$ is separable, hence $X_0^{s,p}(\Omega)$ admits a biorthogonal system $\{w_n, w_n^*\}$ with the following properties. $X_0^{s,p}(\Omega)=\overline{\text{span}\{w_n:n\in\mathbb{N}\}}$ such that for all $w_n^*\in (X_0^{s,p}(\Omega))^*$ we have $\langle w_i^*,w_j \rangle= \delta_{i,j}$. Moreover, $\langle w_n^*, v\rangle =0 \,\,\forall \,n\in\mathbb{N}$ implies that $v=0$, (see \cite{P76}).\\
    Let
    \begin{equation*}
        X_n=\overline{\mbox{span}\{w_{n}, w_{n+1}, \cdots\}}~\,\,\,\text{and}~\,\,\,a_n=\inf_{A \in \Sigma_n} \sup_{u \in A\cap X_n} pT(u).
    \end{equation*}
    Note that the co-dimension of $X_n$ is $n-1$. By Theorem \ref{genus}-$(vi)$, we have $A \cap X_n \neq \emptyset$ for all $A \in \Sigma_n$.  This shows that $\sup\limits_{u\in A\cap X_n}pT(u)>0$. Furthermore, $a_n \leq \lambda_n \leq L,~\forall\, n \in \mathbb{N}$. Now, for each $n\in \mathbb{N}$, choose $v_n \in A \cap X_n$ such that
    \begin{equation*}
        \int_{\Omega}g|v_n|^p dx=1~\text{ and }~0\leq a_n \leq pT(v_n) \leq L+1,\ n \in \mathbb{N}.
    \end{equation*}
    This implies that $(v_n)$ is a bounded in $X_0^{s,p}(\Omega)$. Therefore, proceeding as in the proof of Theorem \ref{B-thm1}, we ensure the existence of an element $v \in X_0^{s,p}(\Omega)$ such that $v_n \rightharpoonup v$ in $X_0^{s,p}(\Omega)$ with ${\int_{\Omega}g|v|^p dx=1}$. Therefore $v \not\equiv 0$ in $\Omega$. However, by the choice of the biorthogonal system, we have
    \begin{equation*}\label{B36}
        \langle w_m^*, v \rangle =\lim\limits_{n \rightarrow \infty} \langle w_m^*, v_n\rangle=0, ~\text{for every}~ m\in \mathbb{N},
    \end{equation*}
    which implies $v=0$. This gives a contradiction. Hence, $\lambda_n\rightarrow \infty$ as $n\rightarrow\infty$. This completes the proof.
\end{proof}
\begin{theorem}\label{eigen closed}
    The set of all positive eigenvalues to \eqref{B} is closed.
\end{theorem}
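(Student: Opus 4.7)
The plan is to pick a sequence $(\lambda_n)$ of positive eigenvalues of \eqref{B} converging to some $\lambda^{*}>0$ and to produce an eigenfunction associated with $\lambda^{*}$ by a compactness-and-passage-to-the-limit argument in the weak formulation \eqref{B2}. For each $n$, let $u_n\in X_0^{s,p}(\Omega)\setminus\{0\}$ be an eigenfunction corresponding to $\lambda_n$, normalised so that $\int_{\Omega}g|u_n|^{p}\,dx=1$. Testing \eqref{B2} with $v=u_n$ gives
\[
\|u_n\|_{X_0^{s,p}(\Omega)}^{p}\le H_{s,p}(u_n,u_n)+\int_{\Omega}V|u_n|^{p}\,dx=\lambda_n\int_{\Omega}g|u_n|^{p}\,dx=\lambda_n,
\]
and since $(\lambda_n)$ is convergent it is bounded, so $(u_n)$ is bounded in $X_0^{s,p}(\Omega)$.

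By reflexivity of $X_0^{s,p}(\Omega)$ and the compact embedding \eqref{CPT MET}, a subsequence (still denoted $(u_n)$) satisfies $u_n\rightharpoonup u^{*}$ in $X_0^{s,p}(\Omega)$, $u_n\to u^{*}$ in $L^{q}(\Omega)$ for every $1\le q<p^{*}$ and $u_n\to u^{*}$ a.e.\ in $\Omega$; moreover there exists $h\in L^{p_{s}^{*}}(\Omega)$ dominating $(u_n)$ (using $p_{s}^{*}<p^{*}$ as in the proof of Theorem \ref{B-thm1}). The H\"older and dominated convergence arguments already used in the proofs of Theorems \ref{B-thm1} and \ref{B-thm6} (with exponents $\frac{N}{sp}$ and $\frac{N}{N-sp}$, exploiting $V,g\in L^{q}(\Omega)$ for $q>\frac{N}{sp}$) then yield
\[
\int_{\Omega}g|u_n|^{p}\,dx\to\int_{\Omega}g|u^{*}|^{p}\,dx=1,\qquad \int_{\Omega}V|u_n|^{p-2}u_n\,\varphi\,dx\to\int_{\Omega}V|u^{*}|^{p-2}u^{*}\,\varphi\,dx
\]
for every $\varphi\in X_0^{s,p}(\Omega)$, and similarly with $g$ in place of $V$. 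In particular $u^{*}\not\equiv 0$.

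Next I would upgrade weak convergence of $(u_n)$ in $X_0^{s,p}(\Omega)$ to strong convergence, which is needed to handle the nonlinear, nonlocal term $H_{s,p}(u_n,\cdot)$ when passing to the limit. Plugging $v=u_n-u^{*}\in X_0^{s,p}(\Omega)$ into the weak formulation for $(\lambda_n,u_n)$ gives
\[
H_{s,p}(u_n,u_n-u^{*})+\int_{\Omega}V|u_n|^{p-2}u_n(u_n-u^{*})\,dx=\lambda_n\int_{\Omega}g|u_n|^{p-2}u_n(u_n-u^{*})\,dx.
\]
The two integrals on the left/right involving $V$ and $g$ tend to $0$ by the dominated-convergence argument recalled above (as in \eqref{B29}--\eqref{B30}); since $\lambda_n\to\lambda^{*}$ is bounded, we obtain $H_{s,p}(u_n,u_n-u^{*})\to 0$. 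Weak convergence also yields $H_{s,p}(u^{*},u_n-u^{*})\to 0$, so
\[
\langle T'(u_n)-T'(u^{*}),\,u_n-u^{*}\rangle\to 0,
\]
where $T$ is the functional defined in \eqref{t-h}. Applying the Simon-type inequalities \eqref{ts1} and \eqref{ts2} exactly as in the proof of Theorem \ref{B-thm6} (splitting the local and nonlocal contributions and treating the cases $p\ge 2$ and $1<p<2$ separately via H\"older's inequality) then gives $\|u_n-u^{*}\|_{X_0^{s,p}(\Omega)}\to 0$.

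With strong convergence in hand, passing to the limit in the weak formulation is routine: the local term $\int_{\Omega}|\nabla u_n|^{p-2}\nabla u_n\cdot\nabla\varphi\,dx$ converges to its counterpart in $u^{*}$ because $|\nabla u_n|^{p-2}\nabla u_n\to|\nabla u^{*}|^{p-2}\nabla u^{*}$ in $L^{p/(p-1)}(\Omega)$ (up to a subsequence, using a.e.\ convergence and Vitali/dominated convergence with the $L^{p}$-bound on $|\nabla u_n|$), and analogously for the double integral. Combined with the already-proved convergence of the $V$- and $g$-terms and $\lambda_n\to\lambda^{*}$, this yields
\[
H_{s,p}(u^{*},\varphi)+\int_{\Omega}V|u^{*}|^{p-2}u^{*}\varphi\,dx=\lambda^{*}\int_{\Omega}g|u^{*}|^{p-2}u^{*}\varphi\,dx
\]
for every $\varphi\in X_0^{s,p}(\Omega)$, so $\lambda^{*}$ is a positive eigenvalue. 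The main technical obstacle is precisely the step producing strong convergence of $(u_n)$, because it is this strong convergence that makes the nonlinear monotone operator $H_{s,p}$ continuous on the sequence; the Simon inequalities together with the dominated-convergence control of the $V$- and $g$-terms are what make this work.
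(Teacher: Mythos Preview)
Your proposal is correct and follows essentially the same strategy as the paper's proof: normalise the eigenfunctions, use reflexivity and the compact embedding to extract a weakly (then strongly) convergent subsequence, obtain strong convergence in $X_0^{s,p}(\Omega)$ via the identity $\langle T'(u_n)-T'(u^{*}),u_n-u^{*}\rangle\to 0$ combined with the Simon inequalities, and finally pass to the limit in the weak formulation using dominated convergence for the $V$- and $g$-terms and $L^{p/(p-1)}$-convergence of $|\nabla u_n|^{p-2}\nabla u_n$ and of the nonlocal kernel. The organisation differs only cosmetically (you split off $H_{s,p}$ first and then add the $V$-piece, whereas the paper works directly with $T'$), but the ingredients and logical flow are the same.
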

\begin{proof}
    Let $(\nu_n)$ be a sequence of eigenvalues of the problem $\eqref{B}$ such that $\nu_n \rightarrow \nu$. Then, $(\nu_n)$ is a bounded sequence. For each $n \in \mathbb{N}$, let $u_n$ be an eigenfunction corresponding to the eigenvalue $\nu_n$ such that $\int_\Omega g|u_n|^p dx=1$. Then by taking $u_n$ as the test function for the eigenpair $(\nu_n,u_n)$ in the weak formulation \eqref{B2} and using the fact that $V \geq 0$, we have
    \begin{equation*}
        \|u_n\|_{X_0^{s,p}(\Omega)}^p \leq H_{s,p}(u_n,u_n) + \int_{\Omega}V|u_n|^pdx=\nu_n.
    \end{equation*}
    Therefore $(u_n)$ is a bounded sequence in $X_0^{s,p}(\Omega)$. Since $X_0^{s,p}(\Omega)$ is a reflexive Banach space, $u_n \rightharpoonup u$ weakly in $X_0^{s,p}(\Omega)$. Then, by the compact embedding \eqref{CPT MET}, $u_n \rightarrow u$ up to a subsequence in $L^q(\Omega)$ for $1 \leq q <p^*$. Recall the functional $T$ as defined in \eqref{t-h}. Then for any $v,w \in X_0^{s,p}(\Omega)$, we have
    \begin{equation}\label{B-EC-1}
        \langle T'(v), w\rangle=H_{s,p}(v,w)+\int_\Omega V|v|^{p-2}vw dx.
    \end{equation}
    Since $u_n \rightharpoonup u$ in $X_0^{s,p}(\Omega)$, we get
    \begin{equation}\label{B-EC-2}
        \langle T'(u), u_n-u \rangle \rightarrow 0 \,\, \text{ as } n \rightarrow \infty.
    \end{equation}
    As $u_n$ is an eigenfunction associated with $\nu_n$, we deduce
    \begin{equation}\label{B-EC-3}
        \langle T'(u_n), u_n-u \rangle= \nu_n \int_\Omega g|u_n|^{p-2}u_n(u_n-u) dx
    \end{equation}
    and
    \begin{align}\label{B-EC-5}
        \int_\Omega g|u_n|^{p-2}u_n (u_n-u) dx \rightarrow 0 \,\, \text{ as } n\rightarrow \infty.
    \end{align}
    Therefore from \eqref{B-EC-3}, \eqref{B-EC-5} and using the fact that $(\nu_n)$ is bounded, we obtain
    \begin{equation}\label{B-EC-6}
        \langle T'(u_n), u_n-u \rangle \rightarrow 0 \,\, \text{ as } n\rightarrow \infty.
    \end{equation}
     Again from \eqref{B-EC-2} and \eqref{B-EC-6}, we have
     \begin{equation}\label{B-EC-7}
          \langle T'(u_n)-T'(u), u_n-u \rangle \rightarrow 0 \,\, \text{ as } n\rightarrow \infty.
     \end{equation}
     Following similar arguments as in Theorem \ref{B-thm6}, we obtain $u_n \rightarrow u$ in $X_0^{s,p}(\Omega)$. Therefore, we deduce that
     \begin{align}
         \nabla u_n &\rightarrow \nabla u \,\, \text{ in } L^p(\Omega) \text{ and } \label{B-EC-8}\\
         \frac{u_n(x)-u_n(y)}{|x-y|^{\frac{N}{p}+s}} &\rightarrow \frac{u(x)-u(y)}{|x-y|^{\frac{N}{p}+s}} \,\, \text{ in } L^p(\mathbb{R}^N) \times L^p(\mathbb{R}^N) \label{B-EC-9}.
     \end{align}
     By \cite[Theorem 4.9]{B11}, there exists $h \in L^p(\Omega)$ such that $|\nabla u_n| \leq h$ a.e. in $\Omega$ for every $n \in \mathbb{N}$. This implies
     \begin{align*}
         \left||\nabla u_n|^{p-2}\nabla u_n -|\nabla u|^{p-2}\nabla u \right|  &\rightarrow  0 \text{  a.e. in } \Omega \text{ and }\\
         \left||\nabla u_n|^{p-2}\nabla u_n -|\nabla u|^{p-2}\nabla u \right|^{\frac{p}{p-1}} &\leq 2^{\frac{p}{p-1}} h^p \text{ a.e. in } \Omega.
     \end{align*} 
     Thus by the dominated convergence theorem, we have
     \begin{equation}\label{B-EC-10}
         A_n=\int_\Omega \left||\nabla u_n|^{p-2}\nabla u_n -|\nabla u|^{p-2}\nabla u \right|^{\frac{p}{p-1}} dx \rightarrow 0
          \,\, \text{ as } n\rightarrow \infty.
     \end{equation}
     For $v \in X_0^{s,p}(\Omega)$, define
     \begin{equation*}
         f_v(x,y)= \frac{|v(x)-v(y)|^{p-2}(v(x)-v(y))}{|x-y|^{\frac{(N+ps)(p-1)}{p}}}.
     \end{equation*}
     On the other hand, using \eqref{B-EC-9} and following the steps as in \eqref{B-EC-10}, one can obtain
     \begin{equation}\label{B-EC-11}
         B_n=\int_{\mathbb{R}^N}\int_{\mathbb{R}^N}  \left| f_{u_n}(x,y) -  f_{u}(x,y) \right|^{\frac{p}{p-1}} dy dx \rightarrow 0 \,\, \text{ as } n \rightarrow \infty.
     \end{equation}
      Since $p_s^*<p^*$, we have $u_n \rightarrow u$ in $L^{p_s^*}(\Omega)$. Thus by \cite[Theorem 4.9]{B11}, there exists $h \in L^{p_s^*}(\Omega)$ such that $|u_n| \leq h$ a.e. in $\Omega$. Therefore, we have $$\left||u_n|^{p-2}u_n-|u|^{p-2}u\right|^{\frac{Np}{(N-ps)(p-1)}}\leq 2^{\frac{Np}{(N-ps)(p-1)}}h^{p_s^*}.$$
      Again by the dominated convergence theorem, we get
      \begin{equation}\label{B-EC-12}
          C_n=\int_\Omega \left||u_n|^{p-2}u_n-|u|^{p-2}u\right|^{\frac{Np}{(N-ps)(p-1)}} dx \rightarrow 0 \,\, \text{ as } n \rightarrow \infty.
      \end{equation}
      Since, $v \in X_0^{s,p}(\Omega)$, applying the H\"older's inequality on the first term of $H_{s,p}$ with the exponents $p$ and $\frac{p}{p-1}$ and then using \eqref{B-EC-10}, we arrive
      \begin{align}\label{B-EC-13}
          \int_\Omega \left(|\nabla u_n|^{p-2}\nabla u_n-|\nabla u|^{p-2}\nabla u \right) v dx &\leq A_n^{\frac{p-1}{p}}\|v\|_{L^p(\Omega)} \rightarrow 0
          \,\, \text{ as } n\rightarrow \infty.
      \end{align}
       Further, on applying the H\"older's inequality twice on the second term of $H_{s,p}$ and using \eqref{B-EC-11}, we get
      \begin{align}\label{B-EC-14}
          \int_{\mathbb{R}^N}\int_{\mathbb{R}^N}\left( f_{u_n}(x,y)-f_u(x,y)\right)(v(x)-v(y)) dy dx \rightarrow 0 \,\, \text{ as } n\rightarrow \infty.
      \end{align}
      Thus from \eqref{B-EC-13} and \eqref{B-EC-14}, we have
      \begin{equation}\label{B-EC-18}
          H_{s,p}(u_n,v) \rightarrow  H_{s,p}(u,v) \,\, \text{ as } n\rightarrow \infty.
      \end{equation}
      Again applying the H\"older's inequality twice on the last integral of \eqref{B-EC-1} and using \eqref{B-EC-12}, we obtain
      \begin{align}\label{B-EC-15}
          \int_\Omega V\left( |u_n|^{p-2}u_n-|u|^{p-2}u \right)v dx &\leq \|V\|_{L^{\frac{N}{ps}}(\Omega)} \left(\int_\Omega \left|\left( |u_n|^{p-2}u_n-|u|^{p-2}u \right)v\right|^{\frac{N}{N-sp}} dx \right)^{\frac{N-ps}{N}} \nonumber\\
          &\leq \|V\|_{L^{\frac{N}{ps}}(\Omega)} \left( C_n^{\frac{p-1}{p}} \|v\|_{L^{p_s^*}(\Omega)}^{\frac{p_s^*}{p}}\right)^{\frac{N-ps}{N}} \nonumber\\
          &\rightarrow 0 \,\, \text{ as } n\rightarrow \infty.
      \end{align}
      Similarly, we derive
      \begin{equation}\label{B-EC-16}
          \int_\Omega g\left( |u_n|^{p-2}u_n-|u|^{p-2}u \right)v dx \rightarrow 0 \,\, \text{ as } n\rightarrow \infty.
      \end{equation}
      Since $u_n$ is an eigenfunction associated with $\nu_n$, we have
      \begin{equation}\label{B-EC-17}
          H_{s,p}(u_n,v)+\int_\Omega V|u_n|^{p-2}u_n v dx =\nu_n \int_\Omega g|u_n|^{p-2}u_n v dx, \,\, \text{ for all }   v\in X_0^{s,p}(\Omega).
      \end{equation}
      Since $\nu_n \rightarrow \nu$ as $n \rightarrow \infty$, hence, passing to the limit as $n \rightarrow \infty$ in \eqref{B-EC-17} and using \eqref{B-EC-18}, \eqref{B-EC-15} and \eqref{B-EC-16}, we get
      \begin{equation}\label{B-EC-19}
          H_{s,p}(u,v)+\int_\Omega V|u|^{p-2}u v dx =\nu_n \int_\Omega g|u|^{p-2}u v dx. 
      \end{equation}
      This shows that $(\nu,u)$ is an eigenpair to the problem \eqref{B}. Hence the result.
\end{proof}
\noindent Finally, the following theorem asserts that eigenfunctions associated with positive eigenvalue $\lambda$ of \eqref{B} are bounded. 
\begin{theorem}\label{B-thm5}
    Let $u$ be an eigenfunction corresponding to an eigenvalue $\lambda>0$ of the problem \eqref{B}. Then, $u \in L^{\infty}(\mathbb{R}^N)$.
\end{theorem}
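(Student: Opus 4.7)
The plan is to prove the $L^\infty$ bound via a Moser-type iteration tailored to the mixed operator $\mathcal{L}_{s,p}$ with potential $V$ and weight $g$ in $L^q(\Omega)$ for $q>N/(sp)$. Since any eigenfunction vanishes outside $\Omega$, it suffices to bound $u$ in $L^\infty(\Omega)$; replacing $u$ by $-u$ if needed, I will estimate $u_+$ and then obtain the bound for $u_-$ by symmetry. The key structural advantage here is that the local $p$-Laplacian part provides the Sobolev embedding into $L^{p^*}$ via the mixed-type inequality \eqref{MSI}, and the hypothesis $q>N/(sp)>N/p$ guarantees that the Hölder conjugate of $p^*/p$ lies strictly below the critical level, which is what fuels the iteration.

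Fix $\beta\geq 1$ and $k>0$, set $u_k:=\min\{u_+,k\}$, and take $\phi:=u\,u_k^{p(\beta-1)}\in X_0^{s,p}(\Omega)$ as a test function in \eqref{B2}. For the local term, the standard chain-rule computation yields
\begin{equation*}
\int_{\Omega}|\nabla u|^{p-2}\nabla u\cdot\nabla\phi\,dx \;\geq\; \frac{C}{\beta^{p-1}}\int_{\Omega}\bigl|\nabla(u_+ u_k^{\beta-1})\bigr|^p dx,
\end{equation*}
while for the nonlocal term, the algebraic inequality of the form $|a-b|^{p-2}(a-b)(aF(a)-bF(b))\geq C|G(a)-G(b)|^p$ (as in \cite{BP16,DKP16,LL14}) applied with $F(t)=(t_+\wedge k)^{p(\beta-1)}$ and $G(t)=t_+(t_+\wedge k)^{\beta-1}$ gives an analogous lower bound on the double integral. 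Dropping the nonnegative potential contribution on the left and applying the mixed Sobolev inequality \eqref{MSI} to $w_k:=u_+u_k^{\beta-1}$ produces
\begin{equation*}
\|w_k\|_{L^{p^*}(\Omega)}^{p} \;\leq\; C\,\beta^{p-1}\lambda \int_{\Omega} g\,u_+^{p}u_k^{p(\beta-1)}\,dx.
\end{equation*}

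Hölder's inequality with exponents $q$ and $q'=q/(q-1)$ bounds the right-hand side by $\|g\|_{L^q(\Omega)}\|w_k\|_{L^{pq'}(\Omega)}^{p}$. Since $q>N/(sp)>N/p$ one has $pq'<p^{*}$, so the iteration exponent is subcritical. Letting $k\to\infty$ by monotone convergence and Fatou's lemma, and writing $\sigma:=p^{*}/(pq')>1$, yields the Moser recursion
\begin{equation*}
\|u_+\|_{L^{p^{*}\beta}(\Omega)} \;\leq\; (C\lambda\beta^{p-1}\|g\|_{L^q(\Omega)})^{1/(p\beta)}\,\|u_+\|_{L^{pq'\beta}(\Omega)},
\end{equation*}
valid whenever the right-hand norm is finite. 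Starting from $\beta_0=1$ (so $\|u_+\|_{L^{pq'}(\Omega)}<\infty$ by \eqref{CONT MET}, using $pq'<p^*$) and iterating with $\beta_{n+1}=\sigma\beta_n$, the geometric growth of $\beta_n$ forces $\sum n/\beta_n$ and $\sum\log\beta_n/\beta_n$ to converge, so the telescoping product of constants remains finite and passes to the limit $n\to\infty$ to give $\|u_+\|_{L^\infty(\Omega)}\leq C\|u_+\|_{L^{p^{*}}(\Omega)}<\infty$.

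The main technical obstacle is ensuring that each step of the iteration is rigorous without assuming a priori $L^\infty$ bounds: the truncation $u_k$ makes $w_k$ bounded and hence all integrals finite for each fixed $k$, which is exactly what allows the inequality to be derived before passing to the limit in $k$. A secondary subtlety is that the potential $V$ is absorbed on the left via its nonnegativity $V\geq 0$, so no Hölder estimate is required for the $V$-term; otherwise one would pick up an additional factor $\|V\|_{L^q(\Omega)}$ on the right, which the same argument still handles. Applying the identical reasoning to $u_-$ (which solves the same equation by half-linearity) completes the proof that $u\in L^\infty(\mathbb{R}^N)$.
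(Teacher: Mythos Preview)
Your Moser-iteration argument is correct and complete in outline; the test function $\phi=u\,u_k^{p(\beta-1)}$, the pointwise lower bound on the local term, the sign analysis for the nonlocal cross terms, the use of $V\ge 0$ to drop the potential, and the subcriticality $pq'<p^*$ (which only needs $q>N/p$, a consequence of $q>N/(sp)$) all check out.

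The paper, however, takes a genuinely different route: a De~Giorgi-type level-set iteration rather than Moser. It tests \eqref{B2} with the truncations $u_{n+1}=(u-(1-2^{-(n+1)}))_+$, uses the elementary inequality \eqref{B_17} to get $\|u_{n+1}\|_{X_0^{s,p}}^p\le \lambda\int g|u|^{p-1}u_{n+1}$, bounds $|u|\le (2^{n+1}-1)u_n$ on $\{u_{n+1}>0\}$, and then applies H\"older with $g\in L^{N/(sp)}$ together with the gap between $L^{p_s^*}$ and $L^{p^*}$ (coming from the local Sobolev embedding \eqref{MSI}) to derive the nonlinear recursion $t_{n+1}\le M^n t_n^{1+\beta}$ for $t_n=\|u_n\|_{L^{p_s^*}}$ with $\beta=\frac{p-sp}{N-sp}$. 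The half-linearity of \eqref{B} is then exploited to rescale $u$ so that $t_0<M^{-1/\beta^2}$, forcing $t_n\to 0$ and hence $u_+\le 1$.

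What each approach buys: the paper's De~Giorgi scheme uses only the very simple test functions $(u-c)_+$ and avoids the Brasco--Parini-type algebraic inequality you invoke for the nonlocal term, at the price of relying on the homogeneity to normalize $t_0$. Your Moser scheme is more self-contained and yields a quantitative estimate $\|u_+\|_{L^\infty}\le C(\lambda,\|g\|_{L^q},N,p,s,\Omega)\,\|u_+\|_{L^{pq'}}$ without any rescaling, but requires the slightly more delicate pointwise inequality for the fractional term. Both routes ultimately extract the same ``gain'' from the presence of the local operator, namely that the Sobolev exponent $p^*$ strictly exceeds the critical exponent dictated by the integrability of $g$.
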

\begin{proof}
    For each $n\in \mathbb{N}$, define $$u_n:=(u-(1-2^{-n}))_+;~t_n:=\|u_n\|_{L^{p_s^*}(\Omega)} \text{ and } v_n=u-(1-2^{-(n+1)}).$$
    Therefore, $\forall\, n\in\mathbb{N}$, we have $u_n \in X_0^{s,p}(\Omega)$ and $0 \leq u_{n+1}\leq u_n$ and hence $t_{n+1} \leq t_n$. Moreover, $(v_n)_+= u_{n+1}$ and $\nabla v_n=\nabla u$. Now, we have 
    $$\nabla (v_n)_+=0 \,\,\text{a.e. in}\,\, \{x\in\Omega: v_n \leq 0\}$$ and 
    $$ |\nabla u_{n+1}|^2= |\nabla (v_n)_+|^2= \nabla (v_n)_+\cdot \nabla (v_n)_+ = \nabla (v_n)_+\cdot \nabla u= \nabla u_{n+1}\cdot \nabla u, $$
     a.e. in $\{x\in\Omega: v_n(x)>0\}$. Note that, the following inequality holds true for any $v\in X_0^{s,p}(\Omega)$.
     \begin{equation}\label{B_17}
        |v(x)-v(y)|^{p-2}(v(x)-v(y))(v_+(x)-v_+(y)) \geq |v_+(x)-v_+(y)|^p,\,\,\, \forall\,x,y \in \mathbb{R}^N.
    \end{equation}
    Also, $u_{n+1}(x) \neq 0$,implies that $u(x)>1-2^{-(n+1)}>0$. Therefore, using the inequality \eqref{B_17}, we deduce that
    \begin{align}\label{B37}
        \|u_{n+1}\|_{X_0^{s,p}(\Omega)} &\leq H_{s,p}(u,u_{n+1}) \nonumber \\
        &\leq H_{s,p}(u,u_{n+1}) + \int_{\Omega}V|u|^{p-2}u u_{n+1}dx\nonumber \\
        &= \lambda \int_{\{u_{n+1}>0\}} g|u|^{p-2}u u_{n+1} dx. 
    \end{align}
    Now, for every $x\in A:=\{x\in \Omega: u_{n+1}(x)>0\}$, we have
    \begin{align*}
        (2^{n+1}-1)u_{n}(x)-u(x)&=(2^{n+1}-1)u(x)-\frac{(2^{n+1}-1)((2^{n}-1))}{2^n}-u(x) \\
        &=2(2^n-1)u(x)-\frac{(2^{n+1}-1)((2^{n}-1))}{2^n} \\
        &=2(2^n-1)(u(x)-\frac{2^{n+1}-1}{2^{n+1}})\\
        &=2(2^n-1)u_{n+1}(x)\\
        &>0.
    \end{align*}
    Hence, $0<u(x)<(2^{n+1}-1)u_{n}(x)$. Therefore, using this estimate, \eqref{B37} together with the H\"older's inequality, we obtain
    \begin{align}\label{B38}
        \|u_{n+1}\|_{X_0^{s,p}(\Omega)}^p &\leq (2^{n+1}-1)^{p-1}\lambda \|g\|_{L^{\frac{N}{ps}}(\Omega)} \|u_{n}\|_{L^{p_s^*}(\Omega)}^{\frac{p}{p_s^*}} \nonumber\\
        &\leq 2^{(n+1)p}\lambda t_n^{\frac{p}{p_s^*}} \|g\|_{L^{\frac{N}{ps}}(\Omega)}.
    \end{align}
    Since $\{x\in\Omega: u_{n+1}(x)>0\}  \subset \{x\in\Omega: u_n(x)>2^{-(n+1)}\}$, we get
    \begin{align}\label{B39}
        |\{u_{n+1}>0\} | &\leq |\{u_n>2^{-(n+1)}\} | \nonumber\\
        & =\int_{\{u_n>2^{-(n+1)}\}}| u_n| ^{p_s^*}| u_n| ^{-p_s^*} dx \nonumber \\ & \leq  2^{(n+1)p_s^*} \int_{\{u_n>2^{-(n+1)}\}}| u_n| ^{p_s^*} dx \nonumber\\
        &=2^{(n+1)p_s^*} t_n^{p_s^*}.
    \end{align}
    Thus using the H\"older's inequality with exponent $\frac{N-sp}{N-p}>1$ and $\frac{N-ps}{p-sp}$, and then applying the mixed Sobolev inequality \eqref{MSI} together with inequalities \eqref{B38} and \eqref{B39}, we get
    \begin{align}\label{B40'}
        t_{n+1}^{p_s^*} &= \int_{\{u_{n+1}>0\}}|u_{n+1}|^{p_s^*} dx
        \leq |\{u_{n+1}>0\} |^{\frac{p-sp}{N-sp}} \|u_{n+1}\|_{L^{p^*}(\Omega)}^{p^*\frac{N-p}{N-sp}} \nonumber \\
        & \leq  2^{(n+1)p_s^*\frac{p-sp}{N-sp}} C t_n^{p_s^*\frac{p-sp}{N-sp}}\|u_{n+1}\|_{X_0^{s,p}(\Omega)}^{p^*\frac{N-p}{N-sp}} \nonumber\\
        &=2^{(n+1)p_s^*\frac{p-sp}{N-sp}} C t_n^{p_s^*\frac{p-sp}{N-sp}}\|u_{n+1}\|_{X_0^{s,p}(\Omega)}^{p_s^*} \nonumber \\
        & \leq 2^{(n+1)p_s^*\frac{p-sp}{N-sp}} t_n^{p_s^*\frac{p-sp}{N-sp}} C\left( \lambda \|g\|_{L^{\frac{N}{ps}} (\Omega)} 2^{(n+1)p} t_n^{\frac{p}{p_s^*}} \right)^{\frac{p_s^*}{p}}.
    \end{align}
    From the equation \eqref{B40'}, we get
    \begin{equation}\label{B40}
        t_{n+1} \leq M^n t_n^{1+\beta},
    \end{equation}
    where $\beta =\frac{p-sp}{N-sp}$ and $\max\left\{1,\left(C\lambda^{\frac{p_s^*}{p}} \|g\|_{L^{\frac{N}{ps}} (\Omega)}^{\frac{p_s^*}{p}}2^{p_s^*+p_s^*\frac{p-sp}{N-sp}}\right)^\frac{2}{p_s^*} \right\}<M<\infty$.\\
    Now define, $t_0=\|u_+\|_{L^{p_s^*}(\Omega)}$. Since a nonzero scalar multiple of an eigenfunction is again an eigenfunction to \eqref{B} for the eigenvalue $\lambda$, we can choose $u$ such that $t_0<M^{-\frac{1}{\beta^2}}$. Therefore, from \eqref{B40} we have $t_n \rightarrow 0$ as $n \rightarrow \infty$. However, $t_n \rightarrow \|(u-1)_+\|_{L^{p_s^*}(\Omega)}^{p_s^*}$ as $n \rightarrow \infty$. This implies that $\|(u-1)_+\|_{L^{p_s^*}(\Omega)}=0$ and hence $|u_+| \leq 1$ a.e. in $\Omega$. Since $u=0$ in $\mathbb{R}^N\setminus\Omega$, we get $\|u_+\|_{L^\infty{(\mathbb{R}^N})} \leq 1$. Following the above arguments for $(-u)$, one can obtain that $\|u_-\|_{L^\infty{(\mathbb{R}^N})} \leq 1$. Hence the result follows.
\end{proof}

\section*{Acknowledgement}
The author Lakshmi R. thanks the financial support provided by the Ministry of Education (formerly known as  MHRD), Government of India. RKG acknowledges the DST-FIST program (Govt. of India) for providing financial support for setting up the research and computing lab facility at the Department of Mathematics, The LNM Institute of Technology, Jaipur under the scheme “Fund for Improvement of Science and Technology” (FIST - No. SR/FST/MS-I/2018/24). SG acknowledges the research facilities available at the Department of Mathematics, NIT Calicut under the DST-FIST support, Govt. of India [Project no. SR/FST/MS-1/2019/40 Dated. 07.01 2020.].

\section*{Conflict of interest statement}
On behalf of all authors, the corresponding author states that there is no conflict of interest.

\section*{Data availability statement}
Data sharing does not apply to this article as no datasets were generated or analysed during the current study.

\end{document}